\newtheorem{defi}{Definition}[section]
\newtheorem{sinobservacion}[defi]{}
\newenvironment{sinob}{\begin{sinobservacion} \rm}{\end{sinobservacion} }
\newtheorem{coro}[defi]{Corollary}
\newtheorem{lema}[defi]{Lemma}
\newtheorem{obs}[defi]{Remark}
\newtheorem{prop}[defi]{Proposition}
\newtheorem{teo}[defi]{Theorem}
\newtheorem{ej}[defi]{Example}
\newcommand{\eps}{\varepsilon}
\newcommand{\benu}{\begin{enumerate}}
\newcommand{\enu}{\end{enumerate}}
\newcommand{\al}{\alpha}
\newcommand{\be}{\beta}
\newcommand{\mbmA}{\mbox{mod}\,A}
\newcommand{\mbiA}{\mbox{ind}\,A}
\newcommand{\fepi}{\twoheadrightarrow}
\newcommand{\Gaa}{\Gamma_A}
\newcommand{\fle}{\rightarrow}
\begin{document}
\title[On the nilpotency index of the radical of a module category]
{On the nilpotency index of the radical of a module category}
\author[Chaio]{Claudia Chaio}
\address{Centro marplatense de Investigaciones Matem\'aticas, Facultad de Ciencias Exactas y
Naturales, Funes 3350, Universidad Nacional de Mar del Plata, 7600 Mar del
Plata, CONICET Argentina}
\email{claudia.chaio@gmail.com}

\author[Guazzelli]{Victoria Guazzelli}
\address{Centro marplatense de Investigaciones Matem\'aticas, Facultad de Ciencias Exactas y
Naturales, Funes 3350, Universidad Nacional de Mar del Plata, 7600 Mar del
Plata, Argentina}
\email{victoria.guazzelli@gmail.com}

\author[Suarez]{Pamela Suarez}
\address{Centro marplatense de Investigaciones Matem\'aticas, Facultad de Ciencias Exactas y
Naturales, Funes 3350, Universidad Nacional de Mar del Plata, 7600 Mar del
Plata, Argentina}
\email{pamelaysuarez@gmail.com}

\keywords{Nilpotency Index, Radical, Pullback Algebras, One-point extension Algebras.}
\subjclass[2000]{16G70, 16G20, 16E10}
\maketitle

\begin{abstract}
Let $A$ be a finite dimensional representation-finite algebra over
an algebraically closed field. The aim of this  work is to determine which vertices of $Q_A$ are sufficient  to be consider in order to compute the nilpotency index of the radical of the module category of $A$. In many cases, we give a formula to compute such index  taking into account the ordinary quiver of the given algebra.
\end{abstract}

\section*{Introduction}
Let $A$ be a finite dimensional $k$-algebra over an algebraically closed field $k$, and consider $\mbox{mod}\,A$ to
be the category of finitely generated right $A$-modules. For $X,Y$ indecomposable $A$-modules,
we denote by $\Re(X,Y)$ the set of
all non-isomorphisms $f: X \rightarrow Y$. Inductively, the powers of
$\Re(X,Y)$ are defined, see (\ref{rad}).

In case we deal with a representation-finite algebra, it is well-known by a result of M. Auslander
that there is a positive integer $n$ such that $\Re^{n}(\mbox{mod}\,A)=0$, see \cite[p. 183]{ARS}.
The minimal lower bound $m \geq 1$ such that $\Re^{m}(\mbox{mod}\,A)$ vanishes
is called the nilpotency index of $\Re(\mbmA)$.

In \cite{C},  the first named author studied the problem to determine the nilpotency index of the radical of $\mbox{mod}\, A$ for $A$ a representation-finite algebra, in case $A$ is a finite dimensional algebra over an algebraically closed field. She proved that the
nilpotency index of $\Re(\mbmA)$ can be obtained  in terms of the left and right degrees of some particular irreducible morphisms. The concept of degree of an irreducible morphism was introduced by S. Liu in \cite{L} and has been a powerful tool to solve many problems concerning the radical of a module category.

Furthermore, the first author also  showed that such a bound has a close relationship with the length of the longest non-zero path from the projective in a vertex $a$ to the injective in the same vertex going through the simple in $a$.

Later, the first and second named authors reduced the steps in order to compute such a bound proving that if $A$ is a finite dimensional algebra over an algebraically closed field, then  it is not necessary to analyze the vertices of the ordinary quiver which are either sinks or sources, see \cite{CG2}.

Continuing with this line of research, the aim  of  this article is to determine the vertices of $Q_A$ that are sufficient to be consider in order to determine the nilpotency index of the radical of the module category of an algebra.

We say that a vertex $u$ in $Q_A$ is involved in  a zero-relation of an ideal $I_A$, whenever $\alpha_m \dots \alpha_1 \in I_A$  then  $u =s(\alpha_i)$ for some $i=2, \dots, m$.

Here, we solve the problem for some representation-finite algebras such that their Auslander-Reiten quiver are components with length and we also give a solution for string algebras. Precisely, in case that the Auslander-Reiten quiver is a component with length, we prove Theorem A and Theorem B.
\vspace{.05in}

{\bf Theorem A}. {\it
Let $A\simeq kQ_A/I_A$ be a representation-finite monomial algebra where the Auslander-Reiten quiver is a component with length. Consider $(R_A)_0$ to be the set of  the vertices $u\in Q_A$
involved in zero-relations of $I_A$. Then the nilpotency index of $\Re(\emph{mod}\,A)$ is determined by the length of the longest  path of irreducible morphisms from the projective $P_u$ to the injective $I_u$, where $u \in(R_A)_0$.  More precisely, it is enough to study only one vertex of each zero-relation of $A$.}
\vspace{.05in}

%{\color{red} More precisely, we prove that it is enough to study one vertex of each relation of $A$.}

{\bf Theorem B}. {\it Let $A \simeq kQ_A/I_A$ be a  representation-finite toupie algebra where $I_A$ has only  commutative relations. Then any vertex of $Q_A$ which is in the shortest  branch of $Q_A$, not being the sink or the source of $Q_A$,  is the one that  we may consider to determine the nilpotency index of the radical of $\emph{mod}\,A$.}
\vspace{.05in}

As an application of Theorem A, in Theorem \ref{nilpo} we present a formula to compute the nilpotency index of the radical of some representation-finite tree algebras with only one zero-relation. In Proposition \ref{ejemplos}, Proposition \ref{interDn} and Proposition \ref{E6} we present some tree algebras satisfying such a formula.
Furthermore, we also give a solution to find the bound  when the ordinary quiver of the algebra is, roughly speaking,  a sequence   of some particular tree algebras glued by   $m$ zero-relations not overlapped,  see Theorem \ref{m-relations}.

On the other hand, applying Theorem B, we find a formula depending on the quiver of  $Q_A$, to obtain the nilpotency index of the radical of a  representation-finite toupie algebra where $I_A$ has only  commutative relations, see Theorem \ref{three-commutative} and Theorem \ref{two-commutative}.
\vspace{.05in}

In \cite{CG},  the first and second authors showed how to read the minimal lower bound $m \geq 1$, such that the $m$-th power of
the radical of the module category of a string algebra $A$ vanishes,  in terms of strings of $Q_A$.
Here, we concentrate to study how to reduce the steps to compute the nilpotency index of the radical of the module category of a string algebra. Observe that for this algebras the Auslander-Reiten quiver may be a component without length. Precisely, we  prove Theorem C.
\vspace{.05in}

{\bf Theorem C}. {\it Let $A\simeq kQ_A/I_A$ be a string algebra of finite representation type. Consider $(R_A)_0$ to be the set of the vertices $u \in (Q_A)_0$
such that they are involved in zero-relations of $I_A$. Then the nilpotency index of $\Re(\emph{mod}\,A)$ is determined by the length of the longest non-zero path of irreducible morphisms between indecomposable modules from the projective $P_u$ to the injective $I_u$ going through the simple $S_u$, where $u\in(R_A)_0$.}
\vspace{.05in}

Pullbacks of finite dimensional algebras over an algebraically closed field and the theory of one-point extension and one-point coextension of algebras are  considered to prove the results, see for instance \cite{CW} and \cite{SS}.
\vspace{.05in}

This paper is organized as follows. In the first section, we state some notations and recall some preliminaries results.
Section 2 is dedicated to prove Theorem A, Theorem B, and Theorem C.
In Section 3, we apply the results of Section 2 to obtain formulas to compute the nilpotency index of the radical of the module category of some algebras.
\vspace{.1in}

\thanks{The authors thankfully acknowledge partial support from CONICET
and from Universidad Nacional de Mar del Plata, Argentina. The first author is a researcher from CONICET. The authors also thank Sonia Trepode for useful conversations.}

\section{preliminaries}

Throughout this work, by an algebra we mean a finite dimensional $k$-algebra over an algebraically closed field,  $k$.

\begin{sinob} %{\bf Notations}
%\vspace{.1in}

A {\it quiver} $Q$ is given by a set of vertices $Q_0$ and
a set of arrows $Q_1$, together with two
maps $s,e:Q_1\fle Q_0$. Given an arrow $\al\in Q_1$, we write $s(\al)$
the starting vertex of $\al$ and $e(\al)$
the ending vertex of $\al$. By $\overline{Q}$ we denote the underlying graph of $Q$.

Let $A$ be an algebra. There exists a quiver $Q_A$, called the {\it ordinary quiver of}  $A$, such that
$A$ is the quotient of the path algebra $kQ_A$ by an admissible ideal.

We denote by $\mbox{mod}\,A$ the category of finitely generated
right $A$-modules and  by  $\mbox{ind}\,A$ the full subcategory of $\mbox{mod}\,A$ which consists of
one representative of each isomorphism class of indecomposable $A$-modules.

We say that an algebra A is {\it representation-finite} if there is only a finite number
of isomorphisms classes of indecomposable A-modules.

We denote by $\Gaa$ the Auslander-Reiten
quiver of $\mbmA$, by $\tau$ and $\tau^{-1}$ the Auslander-Reiten translation $\mbox{DTr}$ and $\mbox{TrD}$, respectively.

For unexplained notions in representation theory of algebras, we refer the reader to \cite{ASS} and \cite{ARS}.
\end{sinob}

\begin{sinob} \label{rad}%{\bf On the radical}
%\vspace{.1in}

Consider  $X,Y \in \mbox{mod}\,A$. The {\it radical} of $\mbox{Hom}_A(X,Y)$, denoted by   $\Re(X,Y)$,  is
the set of all the morphisms $f: X \rightarrow Y$ such that, for each
$M \in \mbox{ind}\,A$, each $h:M \rightarrow X$ and each $h^{\prime }:Y\rightarrow M$
the composition $h^{\prime }fh$ is not an isomorphism. For $n \geq 2$, the powers of the radical
are inductively  defined  as $\Re^{n}(X,Y)= \Re^{n-1}(M,Y)\Re(X,M) = \Re (M,Y)\Re^{n-1}(X,M)$ for some $M \in \mbox{mod}\,A$.

A morphism $f :X  \rightarrow  Y$, with $X,Y \in \mbox{ mod}\,A$,
is called {\it irreducible} provided it does not split
and whenever $f = gh$, then either $h$ is a split monomorphism or $g$ is a
split epimorphism.

It is well-known that an algebra $A$ is representation-finite
if and only if there is a positive integer $n$ such that $\Re^n(X,Y)=0$ for each $A$-module $X$ and $Y$.
The minimal lower bound $m$ such that $\Re^m(\mbox{mod}\,A)=0$ is called the \textit{nilpotency index} of
$\Re(\mbox{mod}\,A)$. We denote it by $r_A$.

We denote by $P_a$, $I_a$ and  $S_a$ the projective, the injective and the simple $A$-module corresponding to the
vertex $a$ in $Q_A$, respectively. For all $a \in (Q_A)_0$, let $r_a$ be the length of the non-zero path of irreducible morphisms between indecomposable $A$-modules from $P_a$ to $I_a$ going through $S_a$.

In \cite{C}, the first named author found the nilpotency index of a representation-finite algebra in terms of  the length of the above mentioned particular paths. Precisely, the author proved the following result, fundamental for our purposes.

\noindent\begin{teo}\emph{({\cite{C}})} \label{cla} Let $A\simeq kQ_A/I_A$ be a finite
dimensional algebra over an algebraically closed field and assume
that $A$ is  representation-finite. Then, the nilpotency index of the radical of the module category of $\emph{mod}\,A$ is
$\;\emph{max}\, \{r_a+1\}_{a \in Q_0}$.
\end{teo}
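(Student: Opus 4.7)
The plan is to reduce the computation of the nilpotency index to a combinatorial problem about paths of irreducible morphisms in the Auslander--Reiten quiver $\Gamma_A$, and then to show that paths passing through a simple module are always extremal. First I would invoke the standard consequence of Auslander's theorem for representation-finite algebras: every morphism in $\Re^n(X,Y)$ between indecomposable modules can be written as a sum of compositions of $n$ irreducible morphisms between indecomposables. Consequently $r_A$ equals one plus the length of a longest non-zero composition of irreducible morphisms in $\Gamma_A$, and the task reduces to identifying this length with $\max_{a\in Q_0}\{r_a\}$.

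The lower bound is nearly by definition. For each vertex $a$ the composite $P_a\twoheadrightarrow S_a\hookrightarrow I_a$ is non-zero in $\mathrm{mod}\,A$, so by the reduction it is realised as a sum of compositions of irreducible morphisms, and a summand of maximal length produces a non-zero path in $\Gamma_A$ from $P_a$ to $I_a$ through $S_a$ of length $r_a$. Hence $\Re^{r_a}(P_a,I_a)\neq 0$ and therefore $r_A\geq r_a+1$; taking the maximum over $a$ gives one direction of the formula.

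The substantial content, and the main obstacle, is the reverse inequality. Given a non-zero composition $\phi=f_n\cdots f_1\colon M_0\to\cdots\to M_n$ of $n$ irreducible morphisms between indecomposables, I need to exhibit a vertex $a$ with $r_a\geq n$. I would choose $a$ so that $S_a$ appears in the top of $\mathrm{im}(\phi)$; this makes $S_a$ simultaneously a quotient of $M_0$ and a subquotient of $M_n$, and provides non-zero morphisms $\pi\colon P_a\twoheadrightarrow M_0$ and $\iota\colon M_n\hookrightarrow I_a$ (after checking that the same vertex can be arranged on both ends) such that $\iota\phi\pi$ is non-zero and factors through $S_a$. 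The heart of the argument is then to lift $\pi$ and $\iota$ to compositions of irreducible morphisms inside $\Gamma_A$, producing an extended non-zero path from $P_a$ to $I_a$ of total length at least $n$ that visibly passes through $S_a$. This is the delicate step: the extensions must interact with $\phi$ so that the enlarged composition remains non-zero and factors via $S_a$, and one controls this using Liu's theory of the degree of an irreducible morphism, which bounds precisely how far an irreducible morphism can be prolonged while preserving non-vanishing when composed with a given morphism. Once the extension is in place, its length witnesses $r_a\geq n$, completing the proof.
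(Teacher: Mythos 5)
This theorem is not proved in the paper at all: it is imported verbatim from the reference \cite{C} (``On the Harada and Sai bound''), so there is no in-text argument to compare yours against. Judged on its own terms, your reduction of $r_A$ to the length of a longest non-zero composition of irreducible morphisms between indecomposables is the standard and correct starting point, and your lower bound $r_A\geq r_a+1$ is essentially immediate once one grants (as the paper's definition of $r_a$ does) that a non-zero path of irreducible morphisms $P_a\rightsquigarrow S_a\rightsquigarrow I_a$ exists: that path is itself a non-zero element of $\Re^{r_a}(P_a,I_a)$. A small repair: for the converse direction you should pick $S_a$ in the \emph{socle} of $\mathrm{im}(\phi)$ rather than in its top; then $\phi^{-1}(S_a)\twoheadrightarrow S_a$ lets you lift the projective cover to $\pi\colon P_a\to M_0$, and extending $S_a\hookrightarrow M_n$ to $\iota\colon M_n\to I_a$ makes $\iota\phi\pi$ non-zero and genuinely factor through $S_a$ with a single vertex $a$; with ``top'' the two ends need not be arranged at the same vertex, and neither $\pi$ nor $\iota$ is a surjection or injection in general.

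The genuine gap is the last step. Having produced a non-zero $\iota\phi\pi\in\Re^{n}(P_a,I_a)$ that factors through $S_a$ \emph{as a morphism}, you still must produce a path of irreducible morphisms in $\Gamma_A$ of length at least $n$ that contains $S_a$ \emph{as a vertex}, since that is what $r_a$ counts. Decomposing $\iota\phi\pi$ directly as a sum of compositions of irreducible morphisms of length $\geq n$ gives long non-zero paths $P_a\rightsquigarrow I_a$, but none of them need pass through $S_a$. Decomposing the two factors $P_a\to S_a$ and $S_a\to I_a$ separately does give a non-zero path through $S_a$, but only of length $\geq k+l$ where $\Re^k$, $\Re^l$ are the exact radical depths of the two factors, and there is no a priori reason that $k+l\geq n$: a composite can lie strictly deeper in the radical filtration than the sum of the depths of its factors (this is exactly the phenomenon that Liu's left and right degrees quantify). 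Your sentence invoking ``Liu's theory of the degree'' names the right tool but supplies no argument; controlling this discrepancy --- via the finiteness of the degrees of the irreducible morphisms $\mathrm{rad}\,P_a\to P_a$ and $I_a\to I_a/\mathrm{soc}\,I_a$ for representation-finite $A$ and the resulting identification of the depth of the composite with the length of the path through $S_a$ --- is precisely the substance of \cite{C}, so as written the proposal assumes the theorem's hardest half rather than proving it.
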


As an immediate consequence of \cite[Proposition 3.11]{CG2} and its dual,
to compute the nilpotency index of the radical of a module
category of a representation-finite algebra, it is enough to analyze
the vertices which are neither sinks nor sources in $Q_A$, as we state below.

\begin{teo}\label{nilpopf} \cite[Theorem 3.16]{CG2}
Let $A=kQ_A/I_A$ be a representation-finite algebra.
Consider $\mathcal{V}$ to be the set of vertices of $Q_A$ which are neither sinks nor sources.
Suppose that $\mathcal{V}\neq \emptyset$. Then the  nilpotency index of $\Re(\emph{mod}\,A)$ is
$\emph{max}\,\{r_a+1\}_{a \in \mathcal{V}}$.
\end{teo}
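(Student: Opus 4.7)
The plan is to bootstrap from Theorem~\ref{cla}, which already identifies the nilpotency index as $\max\{r_a+1\}_{a\in Q_0}$, and to show that this maximum is attained on $\mathcal{V}$. Concretely, I would prove that for every $a\in Q_0\setminus\mathcal{V}$ (that is, every sink or source of $Q_A$) there exists an adjacent vertex $b$ of $Q_A$ with $r_b\ge r_a$. Since $Q_A$ is finite and $\mathcal{V}\neq\emptyset$, iterating such a replacement forces the maximum into $\mathcal{V}$; the isolated-vertex case contributes only the trivial value $r_a=0$ and may therefore be discarded.

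First I would handle the sink case. If $a$ is a sink then $P_a=S_a$ is simple projective, so a realizing path for $r_a$ has the form
\[
S_a=M_0\to M_1\to\cdots\to M_{r_a}=I_a
\]
of irreducible morphisms between indecomposables. Fix any arrow $b\to a$ in $Q_A$; the simple $S_a$ then appears in the top of $\operatorname{rad} P_b/\operatorname{rad}^2 P_b$. The key local fact, which is the content of \cite[Proposition~3.11]{CG2}, is that the mesh of $\Gamma_A$ ending at $P_a$ interacts with the neighbourhood of $P_b$ so precisely that the realizing path above can be prolonged to the left by an irreducible morphism and then reorganised as a non-zero path of irreducibles starting at $P_b$, threading $S_b$, and terminating at $I_b$, of length at least $r_a$. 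This gives $r_b\ge r_a$.

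Dually, if $a$ is a source then $I_a=S_a$ is simple injective, and an arrow $a\to c$ in $Q_A$ together with the dual of \cite[Proposition~3.11]{CG2} yields by a symmetric right-prolongation a non-zero path from $P_c$ through $S_c$ to $I_c$ of length at least $r_a$, whence $r_c\ge r_a$. Combining the two reductions and iterating (each step strictly moves to a new vertex of the finite quiver $Q_A$) we conclude that $\max_{a\in Q_0}\{r_a+1\}=\max_{a\in\mathcal{V}}\{r_a+1\}$, which by Theorem~\ref{cla} is exactly the nilpotency index of $\Re(\mbmA)$.

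The main obstacle lies in the AR-theoretic local step, namely in verifying that the prolonged path remains a \emph{non-zero} element in the appropriate power of the radical after being re-routed through $S_b$ (resp.\ $S_c$). This requires a careful analysis of the almost split sequence ending at (resp.\ starting at) $S_a$, the composition structure of $\operatorname{rad} P_b$ (resp.\ $I_c/\operatorname{soc} I_c$), and the admissibility of $I_A$. These verifications are precisely what \cite[Proposition~3.11]{CG2} and its dual package, and once they are in hand the present theorem follows immediately from Theorem~\ref{cla}.
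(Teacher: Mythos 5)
The paper offers no proof of this statement at all: it is quoted verbatim as \cite[Theorem 3.16]{CG2}, and the only justification given is the sentence that it is ``an immediate consequence of \cite[Proposition 3.11]{CG2} and its dual'' together with Theorem~\ref{cla}. Your proposal reconstructs exactly that route --- reduce to Theorem~\ref{cla} and use the local inequality of \cite[Proposition 3.11]{CG2} at sinks and its dual at sources --- so in spirit it matches the paper. The genuinely hard content (the AR-theoretic prolongation of a realizing path for $r_a$ past the simple projective $P_a=S_a$, resp.\ the simple injective $I_a=S_a$) is, both in your write-up and in the paper, delegated to the cited proposition, so there is nothing to check there beyond the citation.

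There is, however, one step in your combinatorial bookkeeping that fails as literally stated. From ``for every sink or source $a$ there \emph{exists} an adjacent $b$ with $r_b\ge r_a$'' you cannot conclude by iteration that the maximum lands in $\mathcal{V}$: the replacement can oscillate between a sink and an adjacent source indefinitely, and your claim that ``each step strictly moves to a new vertex'' is simply not true (consider a sink $a$ whose chosen in-neighbour $b$ is a source whose only out-neighbour is $a$). To close this you need the stronger, universally quantified form of the local inequality --- for a sink $a$, \emph{every} vertex $b$ with an arrow $b\to a$ satisfies $r_b\ge r_a$, and dually --- which your phrase ``fix any arrow $b\to a$'' suggests but which you do not exploit. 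With that version, if the set $W$ of vertices realizing $\max_a r_a$ were contained in the sinks and sources, then $W$ would be closed under passing to all neighbours of any of its sinks and sources, hence by connectedness of $Q_A$ one gets $W=(Q_A)_0\supseteq\mathcal{V}\neq\emptyset$, a contradiction. That one-line connectivity argument (or any equivalent device guaranteeing progress) is the missing piece; the rest of your outline is consistent with what the paper asserts.
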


Note that if $\mathcal{V}$ is empty, then the algebra is hereditary and we have the following result.

\begin{teo}\cite[Theorem 4.11]{Z} \label{ppalher}
Let $H=kQ$ be a representation-finite hereditary algebra and let $r_H$
be the nilpotency index of $\Re(\emph{mod}\,H).$ The following conditions hold.
\begin{enumerate}
\item[(a)] If $\overline{Q}=A_n$, then $r_H=n,$ for $n\geq 1$.
\item[(b)] If $\overline{Q}=D_n$, then $r_H=2n-3,$ for $n\geq 4$.
\item[(c)] If $\overline{Q}=E_6$, then $r_H=11.$
\item[(d)] If $\overline{Q}=E_7$, then $r_H=17.$
\item[(e)] If $\overline{Q}=E_8$, then $r_H=29.$
\end{enumerate}
\end{teo}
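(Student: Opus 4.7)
The starting point is Theorem~\ref{cla}, which reduces $r_H$ to computing $\max_{a\in Q_0}\{r_a+1\}$, where $r_a$ is the length of the longest non-zero path of irreducible morphisms from $P_a$ to $I_a$ through $S_a$. Since $H=kQ$ is hereditary, its ideal of relations is zero; consequently every composable sequence of irreducible morphisms between indecomposables in $\Gamma_H$ is automatically non-zero. Thus $r_a$ is simply the combinatorial length of the longest path $P_a\rightsquigarrow S_a \rightsquigarrow I_a$ in the translation quiver $\Gamma_H$.

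The key ingredient is the explicit description of $\Gamma_H$ when $H$ is representation-finite: by Gabriel's theorem, $\overline{Q}$ is a Dynkin diagram of one of the five listed types, and $\Gamma_H$ embeds as a full translation subquiver of $\mathbb{Z}\overline{Q}$ bounded by the $\tau$-slice of projectives on one side and the $\tau$-slice of injectives on the other. The Nakayama functor satisfies $\nu P_a\simeq I_a$, and combinatorially this corresponds to shifting $P_a$ to $I_{\sigma(a)}$ by exactly $h-2$ applications of $\tau^{-1}$ (where $\sigma$ is the Nakayama permutation and $h$ is the Coxeter number of $\overline{Q}$). Since the nilpotency index depends only on $\overline{Q}$ (because the algebras obtained by re-orienting $Q$ are tilting equivalent), we may fix any convenient orientation when carrying out the counting.

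Given this picture, for each Dynkin type I would identify a vertex $a$ for which the descent $P_a \rightsquigarrow S_a$ and the ascent $S_a \rightsquigarrow I_a$ together span the full horizontal width of $\Gamma_H$. A direct count in the $\mathbb{Z}\overline{Q}$-coordinates produces a path of length exactly $h-2$, so that $r_a+1=h-1$. Substituting the Coxeter numbers $h=n+1,\ 2n-2,\ 12,\ 18,\ 30$ then gives the values $n,\ 2n-3,\ 11,\ 17,\ 29$ stated in items (a)--(e). The matching upper bound $r_a+1\leq h-1$ follows cleanly from the identity $\tau^{h-2}P_a\simeq I_{\sigma(a)}$, which prevents any path in $\Gamma_H$ from being longer than $h-2$ steps.

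The main obstacle is executing the lower-bound half of this argument uniformly, especially for the exceptional types $E_6, E_7, E_8$, whose AR-quivers contain $36$, $63$ and $120$ indecomposables. A practical strategy is to pick, for each type, a bipartite orientation and choose $a$ to be a vertex of maximal Coxeter height in $\overline{Q}$: then the two zigzags $P_a\rightsquigarrow S_a$ and $S_a\rightsquigarrow I_a$ can each be traced along a single diagonal of $\mathbb{Z}\overline{Q}$, and their concatenation exhibits an explicit path of length $h-2$ visiting $S_a$. This reduces the case-by-case verification to a finite check on a single diagonal in each of the five diagrams.
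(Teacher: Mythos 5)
First, a point of reference: the paper does not prove this statement at all --- it is imported verbatim from Zacharia \cite[Theorem 4.11]{Z} --- so there is no internal argument to compare yours against. Your plan (reduce via Theorem \ref{cla} to measuring paths in $\Gamma_H$, embed $\Gamma_H$ into $\mathbb{Z}\overline{Q}$, and show $r_H=h-1$ for the Coxeter number $h$) is a legitimate and standard route, and the numerology $h-1 = n,\ 2n-3,\ 11,\ 17,\ 29$ is correct in all five cases. However, as written the argument has concrete false steps and its decisive half is never executed.

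The first gap: it is not true that for a hereditary algebra ``every composable sequence of irreducible morphisms between indecomposables is automatically non-zero.'' Already for $kQ$ with $Q\colon 1\to 2$, the two irreducible morphisms $P_2=\mathrm{rad}\,P_1\hookrightarrow P_1\twoheadrightarrow S_1$ compose to zero, since $\mathrm{Hom}(S_2,S_1)=0$; vanishing of composites along paths in $\Gamma_H$ has nothing to do with the relation ideal being zero. What you actually need --- that some path $P_a\rightsquigarrow S_a\rightsquigarrow I_a$ has non-zero composite and that all such paths have the same length --- is true, but it comes from $\mathrm{Hom}(P_a,S_a)\neq 0\neq\mathrm{Hom}(S_a,I_a)$ together with $\Gamma_H$ being a directed component with length, not from heredity alone. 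The second gap: the identity $\tau^{h-2}P_a\simeq I_{\sigma(a)}$, on which your upper bound rests, is false ($\tau P_a=0$ in $\mathrm{mod}\,H$; and read as $I_{\sigma(a)}\simeq\tau^{-(h-2)}P_a$ it still fails, e.g.\ for $Q\colon 1\to 2$ one has $I_2=P_1=\tau^{0}P_1$ while $I_1=\tau^{-1}P_2$, so the $\tau$-displacement is not uniform in $a$). The correct uniform statement --- that the longest non-zero composite of irreducible morphisms in $\mathrm{mod}\,H$ has length $h-2$ --- is essentially the theorem itself, so it cannot be invoked to supply the bound. Finally, the lower-bound half is announced rather than carried out (``I would identify a vertex\dots'', ``a practical strategy is\dots''); for $E_6$, $E_7$, $E_8$ in particular no path is actually exhibited. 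As it stands this is a plausible outline with two incorrect justifications and an unverified case analysis, not a proof.
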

\end{sinob}

\begin{sinob} \label{orbit}%{\bf Components and the orbit graph}
%\vspace{.1in}

Let $\Gamma$ be a component of $\Gamma_A$. Following \cite{CPT}, we say that $\Gamma$ is a component {\it with length}
if parallel paths in $\Gamma$ have the same length. By {\it parallel paths} we mean paths of irreducible morphisms between indecomposable $A$-modules
having the same starting vertex and the same ending vertex.
By the arrow $\rightsquigarrow$  we denote  a  path of irreducible morphisms between indecomposable $A$-modules.

We say that a path $Y_1 \rightarrow Y_{2}\rightarrow
\dots\rightarrow Y_{n-1}\rightarrow Y_n$ in $\Gamma_A$ is {\it sectional} if for each $i=1, \dots, n-1$,
we have that $Y_{i+1} \not \simeq  {\tau }Y_{i-1}$,  see  \cite{IT}.

Given a directed component $\Gamma$ of $\Gamma_A$, its {\it orbit graph}, denoted by $O(\Gamma )$, has as points the $\tau$-orbits
$O(M)$ of the modules M in $\Gamma$. There exists an edge between $O(M)$ and $O(N)$ in $O(\Gamma )$
whenever there are integers $m, n$  and an irreducible morphism from $\tau^m M$ to $\tau^n N$ or from $\tau^n N$ to $\tau^m M$.

We recall that if the orbit graph  $O(\Gamma)$ is of tree-type,
then $\Gamma$ is a simply connected translation quiver, and therefore, by \cite{BG},
$\Gamma$ is a component with length.
\end{sinob}

\begin{sinob}%{\bf String algebras}
%\vspace{.1in}

Let $A$ be an algebra such that $A \simeq kQ_A/I_A$. The algebra $A$ is called a {\it string algebra} provided:

\begin{enumerate}
\item[(1)] Any vertex of $Q_A$ is the starting point of at most two arrows.
\item[(1')] Any vertex of $Q_A$ is the ending point of at most two arrows.
\item[(2)] Given an arrow $\beta$, there is at most one arrow $\gamma$ with $s(\beta)=e(\gamma)$ and $\beta\gamma \notin I_A.$
\item[(2')] Given an arrow $\gamma$, there is at most one arrow $\beta$ with $s(\beta)=e(\gamma)$ and $\beta\gamma \notin I_A.$
\item[(3)] The ideal $I_A$ is generated by a set of paths of $Q_A$.
\end{enumerate}
\vspace{.05in}

A {\it string} in $Q_A$ is either a trivial path
$\varepsilon_v$ with $v\in Q_0$, or a reduced walk $C=c_n\dots c_1$ of length $n\geq 1$ such that
no sub-walk $c_{i+t}\dots c_i$ nor its inverse belongs to $I_A$.
We say that a string $C=c_n\dots c_1$ is {\it direct (\it inverse)}
provided all $c_i$ are arrows (inverse of arrows, respectively). We
consider the trivial walk $\eps_v$ a direct as well as an inverse string.

For general theory on string algebras, we refer the reader to \cite{BR}.
\end{sinob}

\begin{sinob} %{\bf One-point extension and coextensions algebras}
%\vspace{.1in}

Let $A$ be an algebra and $X$ be an $A$-module.
The {\it one-point extension} of $A$ by $X$, which we denote by $A[X]$, is the 2x2 matrix algebra

 \[ A[X]= \left[ \begin{array}{cc}
A & 0 \\
  {_k}X_A & k
\end{array}
\right] \]

\noindent with the ordinary addition of matrices and the multiplication induced from the usual $k$-$A$-bimodule structure ${_k}X_A$ of $X$.

The {\it one-point coextension} of $A$ by $X$, which we denote by $[X]A$, is the 2x2 matrix algebra

\[ [X]A= \left[ \begin{array}{cc}
k & 0 \\
  DX & A
\end{array}
\right] \]

\noindent with the ordinary addition of matrices and the multiplication induced from the usual $A$-$k$-bimodule structure $DX =\mbox{Hom}_k({_k}X_A, k)$ induced by the $k$-$A$-bimodule structure of ${_k}X_A$.
\vspace{.1in}

We recall the following useful results from \cite[Chapter XV, Corollary 1.7]{SS} of one-point extension and one-point coextension algebras.

\begin{prop} \label{lema S-S} Let $A$ be an  algebra and $X$ be an $A$-module. If
$0 \rightarrow L \rightarrow M \rightarrow N \rightarrow 0$
is an almost split sequence in $\emph{mod}\,A$ such that $\emph{Hom}_A(X, L) = 0$, then this sequence remains almost split in $\emph{mod}\,A[X]$.
\end{prop}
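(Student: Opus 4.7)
I would work with the standard description of the module category of the one-point extension $B = A[X]$: a right $B$-module is a triple $(U, V, \phi)$ with $U \in \mathrm{mod}\,A$, $V$ a $k$-vector space, and $\phi : V \otimes_k X \to U$ a homomorphism of right $A$-modules, while a $B$-morphism $(U, V, \phi) \to (U', V', \phi')$ is a pair $(f, g)$ of $A$- and $k$-linear maps satisfying $f \circ \phi = \phi' \circ (g \otimes \mathrm{id}_X)$. The functor $Y \mapsto (Y, 0, 0)$ gives a full embedding $\mathrm{mod}\,A \hookrightarrow \mathrm{mod}\,B$, so the given short exact sequence stays short exact and non-split in $\mathrm{mod}\,B$, and the end-terms $L$ and $N$ remain indecomposable there.

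Next I would verify that $p: M \to N$ is right almost split in $\mathrm{mod}\,B$. Take a $B$-morphism $h = (h_A, h_k): (Z_A, Z_k, \phi_Z) \to (N, 0, 0)$ that is not a split epimorphism; necessarily $h_k = 0$ and the compatibility relation forces $h_A \circ \phi_Z = 0$. A first observation is that $h_A$ cannot be a split epimorphism in $\mathrm{mod}\,A$: any $A$-section $s$ of $h_A$ automatically satisfies the trivial compatibility $s \circ 0 = \phi_Z \circ (0 \otimes \mathrm{id}_X)$, producing a $B$-section $(s, 0)$ of $h$, contradicting our assumption.

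Then, since $p$ is right almost split in $\mathrm{mod}\,A$ and $h_A$ is not a split epimorphism, there exists $g: Z_A \to M$ in $\mathrm{mod}\,A$ with $p \circ g = h_A$. The crucial step is to promote $g$ to a $B$-morphism $(g, 0): (Z_A, Z_k, \phi_Z) \to (M, 0, 0)$, which amounts to showing that $g \circ \phi_Z = 0$. From $p \circ g \circ \phi_Z = h_A \circ \phi_Z = 0$ we deduce that the image of $g \circ \phi_Z$ lies in $\ker p = L$, so, after choosing a basis of $Z_k$, $g \circ \phi_Z$ corresponds to a family of $A$-homomorphisms from $X$ to $L$. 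The hypothesis $\mathrm{Hom}_A(X, L) = 0$ forces each such homomorphism to vanish, hence $g \circ \phi_Z = 0$ and $(g, 0)$ is the required lift of $h$ through $p$.

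The main obstacle is exactly this compatibility check for the lifted morphism; everything else is formal bookkeeping about how $\mathrm{mod}\,A$ sits inside $\mathrm{mod}\,A[X]$. The hypothesis $\mathrm{Hom}_A(X, L) = 0$ is used at precisely this one point, to ensure that the lift obtained in $\mathrm{mod}\,A$ is compatible with the additional $k$-component and therefore extends to a lift in $\mathrm{mod}\,B$.
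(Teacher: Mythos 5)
Your argument is correct. The paper does not prove this statement at all — it is recalled verbatim from \cite[Chapter XV, Corollary 1.7]{SS} — and your direct verification (describing $\mathrm{mod}\,A[X]$ by triples $(U,V,\phi)$, noting that $\mathrm{mod}\,A$ sits inside as a full exact subcategory via $V=0$, and checking that the lift $g$ of a non-split-epi test map satisfies $g\circ\phi_Z=0$ because its image lands in $L$ and $\mathrm{Hom}_A(X,L)=0$) is exactly the standard proof of that cited result, with the hypothesis used at precisely the right point.
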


A dual result holds for one-point coextension algebras.

\begin{prop}\label{dual lema S-S} Let $A$ be an  algebra and $Y$ be an $A$-module. If
$0 \rightarrow L \rightarrow M \rightarrow N \rightarrow 0$
is an almost split sequence in $\emph{mod}\,A$ such that $\emph{Hom}_A(N, Y) = 0$, then this sequence remains almost split in $\emph{mod}\,[Y]A$.
\end{prop}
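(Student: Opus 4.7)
The plan is to deduce Proposition \ref{dual lema S-S} from Proposition \ref{lema S-S} via the standard $k$-duality $D=\mathrm{Hom}_k(-,k)$, which is an exact contravariant equivalence between $\mathrm{mod}\,A$ and $\mathrm{mod}\,A^{op}$ that preserves indecomposables and maps almost split sequences to almost split sequences (reversing the direction). Everything will hinge on the algebra identification $([Y]A)^{op}\cong A^{op}[DY]$.

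First I would verify this identification directly from the matrix description: the opposite of the lower-triangular matrix algebra $[Y]A$ is obtained by transposing and reinterpreting the bimodule structure on $DY$, which turns the $A$-$k$-bimodule $DY$ (the defining bimodule of the coextension) into a $k$-$A^{op}$-bimodule, namely the defining bimodule of the one-point extension $A^{op}[DY]$. Moreover, under this identification the natural inclusion $\mathrm{mod}\,A\hookrightarrow \mathrm{mod}\,[Y]A$ corresponds, after applying $D$, to the natural inclusion $\mathrm{mod}\,A^{op}\hookrightarrow \mathrm{mod}\,A^{op}[DY]$.

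Next I would apply $D$ to the given almost split sequence to obtain an almost split sequence
\[
0\longrightarrow DN \longrightarrow DM \longrightarrow DL \longrightarrow 0
\]
in $\mathrm{mod}\,A^{op}$. The hypothesis $\mathrm{Hom}_A(N,Y)=0$ translates, via the duality, into $\mathrm{Hom}_{A^{op}}(DY, DN)=0$. At this point Proposition \ref{lema S-S}, applied to the algebra $A^{op}$, the module $DY$, and the sequence above, gives that this sequence remains almost split in $\mathrm{mod}\,A^{op}[DY]\cong \mathrm{mod}\,([Y]A)^{op}$. Dualizing once more via $D$ brings us back to the original sequence $0\to L\to M\to N\to 0$ and yields that it is almost split in $\mathrm{mod}\,[Y]A$, as required.

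The only real obstacle is the bookkeeping in the first step: one must carefully track the bimodule structures to see that $([Y]A)^{op}$ and $A^{op}[DY]$ are isomorphic as $k$-algebras and that the duality $D$ intertwines the two embedding functors. Once this compatibility is established, the rest of the argument is a purely formal application of Proposition \ref{lema S-S} together with the fact that $D$ preserves almost split sequences.
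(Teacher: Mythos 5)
Your proof is correct and is precisely the argument the paper has in mind: the paper simply asserts that "a dual result holds" for one-point coextensions (citing the same source as Proposition \ref{lema S-S}), and your use of the duality $D$ together with the identification $([Y]A)^{op}\cong A^{op}[DY]$ is the standard way to make that assertion rigorous. The bimodule bookkeeping you flag does work out, so nothing further is needed.
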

\end{sinob}

\begin{sinob} %{\bf Linearly oriented pullback algebras}
%\vspace{.1in}

Let $Q$ be a full subquiver of the quivers $Q'$ and $Q''$ and $i:Q\rightarrow Q'$, $j: Q\rightarrow Q''$ be inclusions quivers.
The {\it pushout} of $i$ and $j$ is the quiver where the vertices are those of $Q'$ and those of $Q''$ which are not in $j(Q)$, and the arrows are those of $Q'$ and

\begin{enumerate}
\item[$\bullet$] for $x,y \in Q''_0 \backslash j(Q)_0$ there is one arrow from $x$ to $y$ for each arrow from $x$ to $y$ in $Q''$;
\item[$\bullet$] for $x \in Q''_0 \backslash j(Q)_0$ and $y=i(z)$ for some $z \in Q$,  there is one arrow from $x$ to $y$ for each arrow from $x$ to $j(z)$ in $Q''$;
\item[$\bullet$] for $x=i(z)$ for some $z\in Q$ and $y \in Q''_0 \backslash j(Q)_0$, there is one arrow from $x$ to $y$ for each arrow from $j(z)$ to $y$ in $Q''$.
\end{enumerate}

Following \cite{CW}, consider $C\simeq kQ_C/I_C$ and $D\simeq kQ_D/I_D$ to be algebras and $Q_B$ to be a full and convex subquiver of the quivers $Q_C$ and $Q_D$ such that $I_C\cap kQ_B=I_D\cap kQ_B$. Denote this ideal by $I_B$ and consider the algebra $B \simeq kQ_B/I_B$.
So $B \simeq e_B C e_B \simeq  e_B D e_B$ is a common quotient of $C$ and $D$. Let $f_{C}: C \twoheadrightarrow B$ and $f_{D}: D \twoheadrightarrow B$ be the natural surjective morphisms given respectively, by $c \mapsto e_{_B} c e_{_B}$ and $d \mapsto e_{_B} d e_{_B}$. The {\it pullback} of $f_{C}$ and $f_{D}$ is the algebra $A=\{(c, d)\in C \times D \mid  e_{_B} c e_{_B} = e_{_B} d e_{_B} \}$.

By \cite[Theorem 1]{CW}, if  $A$ is the pullback of two morphisms of algebras $f_{C}: C \rightarrow B$ and $f_{D}: D\rightarrow B$, $Q_A$ is the pushout of the inclusion of quivers $Q_B  \rightarrow Q_C$ and $Q_B \rightarrow Q_D$ and $I_A$ is the ideal of $kQ_A$ generated by $I_C$, $I_D$ and the paths linking $(Q_C)_0 \backslash (Q_B)_0$ and $(Q_D)_0 \backslash (Q_B)_0$ then $A \simeq kQ_A/I_A$.

We say that $A$ is  a {\it linearly oriented pullback} of $C \fepi B$ and  $D \fepi B$ if there is no path from $(Q_B)_0$ to
$(Q_C)_0 \backslash (Q_B)_0$ and neither from $(Q_D)_0 \backslash (Q_B)_0$ to $(Q_B)_0$.

Given $M, N $ in $\mbox{ind}\,A$, a path from $M$ to $N$ in  $\mbox{ind}\,A$ is a sequence of non-zero morphisms $M=X_1 \rightarrow \dots \rightarrow X_t=N$ with $t \geq 1$, where $X_i$ are indecomposable for all $i=1, \dots, t$. We say that $M$ is a {\it predecessor} of $N$ and that $N$ is a {\it successor} of $M$.

For a module $M$, we denote by $\mbox{Succ}\,M$ the full subcategory of $\mbox{ind}\;A$ consisting of all the successors of any indecomposable direct summand of $M$ and by $\mbox{Pred}\,M$ the full subcategory of $\mbox{ind}\,A$ consisting of all the predecessors of any indecomposable direct summand of $M$.
\end{sinob}

\section{On the nilpotency index}

Let $A$ be an algebra. Throughout this work, we denote by  $r_A$ the nilpotency index of the radical of the module category of $A$.

The aim of this section is to find the vertices $a \in (Q_A)_0$ where the paths from the projective $P_a$ to the injective $I_a$ going through $S_a$ are of maximal length.

\subsection{Components with length}
First, we study the above mentioned problem for  algebras such that their Auslander-Reiten quiver is a component with length.

\begin{defi}\label{involved}
Let $A \simeq kQ_A/I_A$.  We say that a vertex $x$ in $Q_A$ is involved in a zero-relation $\alpha_m \dots \alpha_1 \in I_A$ if $x=s(\alpha_i)$ for some $i=2, \dots, m$.
\end{defi}

We denote by $(R_A)_0$ the set of all the vertices $u \in (Q_A)_0$ involved in zero-relations of $I_A$.

\begin{defi}\label{overlap}
Let $A \simeq kQ_A/I_A$ be an algebra and $I_A=< \chi >$,  where $\chi$ is a minimal set of relations. Let $w$ be a path in $k Q_A$ and  $\chi'=\chi \mid_{w}$. Two paths $\gamma_1, \gamma_2\in \chi'$ are said {\it overlap} each other if $s(\gamma_1)<s(\gamma_2)<e(\gamma_1)$ or if $s(\gamma_2)<s(\gamma_1)<e(\gamma_2)$.
\end{defi}

We start proving some lemmas.

\begin{lema}\label{zero-vertex}
Let $A \simeq kQ_A/I_A$ be a representation-finite  monomial algebra with $r$ zero-relations not overlapped, and  $r\geq 2$. Let $\Gamma_A$ be a component with length. The following conditions hold.
\begin{enumerate}
\item[(a)] If $a$ and $b$ are vertices of $(R_A)_0$ involved in the same zero-relation of $I_A$ then $r_a= r_b$.
\item[(b)] If $a$ and $b$ are vertices in $(Q_A)_0\backslash (R_A)_0$ and there is an arrow from $a$ to $b$, then
 $r_a= r_b$.
\item[(c)]  If $a$ is a vertex in $(R_A)_0$ and $b$ is a vertex in $(Q_A)_0\backslash (R_A)_0$,  then $r_a\geq r_b$.
\end{enumerate}
\end{lema}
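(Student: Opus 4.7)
The plan is to exploit Theorem \ref{cla} together with the component-with-length hypothesis. Recall that $r_a$ denotes the length of the longest non-zero path of irreducible morphisms between indecomposables going from $P_a$ through $S_a$ to $I_a$, and that in a component with length any two parallel paths share a common length. Throughout, I will also use that $A$ being monomial allows us to read off the indecomposable summands of the radical of $P_a$ and of $I_a/\mbox{soc}\,I_a$ directly from the arrows of $Q_A$ and the paths not belonging to $I_A$.

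For part (a), fix a zero-relation $\alpha_m \dots \alpha_1 \in I_A$ and write $a=s(\alpha_i)$, $b=s(\alpha_j)$ with $2 \le i,j \le m$. The idea is to produce a length-preserving bijection between maximal non-zero paths $P_a \caminod S_a \caminod I_a$ and maximal non-zero paths $P_b \caminod S_b \caminod I_b$. In the monomial setting, the layers of $P_a$ above $S_a$ are indexed by the non-zero paths of $kQ_A$ starting at $a$, and dually the layers of $I_a$ below $S_a$ are indexed by the non-zero paths ending at $a$. Because $a$ and $b$ sit strictly inside the same monomial relation, the combinatorial data controlling paths from $P_a$ to $I_a$ through $S_a$ and from $P_b$ to $I_b$ through $S_b$ coincide up to the shift along $\alpha_m \dots \alpha_1$. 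Once such a bijection is exhibited, the component-with-length hypothesis forces the two lengths to match, giving $r_a=r_b$.

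For part (b), assume $a,b \notin (R_A)_0$ and that there is an arrow $\alpha:a\to b$ in $Q_A$. Since no zero-relation is involved at $a$ or $b$, the local structure around both vertices is hereditary; in particular $\alpha$ induces irreducible morphisms $P_b\hookrightarrow P_a$ and $I_a\twoheadrightarrow I_b$, whose kernels and cokernels are controlled by the absence of relations. The plan is to regard $A$ as a one-point (co)extension at $a$ (or $b$) of a full convex subalgebra and invoke Proposition \ref{lema S-S} and its dual to guarantee that the almost split sequences used along the extremal path $P_a\caminod S_a\caminod I_a$ persist after restriction. This yields a length-preserving correspondence with paths $P_b\caminod S_b\caminod I_b$, and $r_a=r_b$ follows from the component-with-length property.

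For part (c), pick a longest non-zero path $P_b\caminod S_b \caminod I_b$ of length $r_b$ and choose any undirected walk in $\overline{Q_A}$ joining $b$ to $a$. Moving along this walk one edge at a time, I will alternate between part (b) whenever both endpoints of the edge are outside $(R_A)_0$ and part (a) whenever the edge enters a zero-relation, thereby transporting the path of length $r_b$ to a non-zero path through $S_a$ from $P_a$ to $I_a$, perhaps only after discarding an initial segment; the zero-relation containing $a$ can only add further admissible factors to the path, never shorten it, so $r_a \ge r_b$. The main obstacle is part (a): one must carefully verify that the shift along the relation provides a genuine length-preserving bijection of maximal paths, and not merely an inequality; this is where the combinatorics of monomial ideals together with the component-with-length assumption are essential, and it is the step I expect to require the most care.
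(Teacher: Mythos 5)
There is a genuine gap: your text is a plan that defers precisely the step that carries all the content, and the mechanism it would need is never identified. The paper's proof of every part of the lemma rests on one simple device that your proposal does not contain: given an arrow $a\to b$, the module $P_b$ is a direct summand of $\mbox{rad}\,P_a$ and dually $I_a$ is a direct summand of $I_b/\mbox{soc}\,I_b$, so there are irreducible morphisms $P_b\to P_a$ and $I_b\to I_a$; these produce two \emph{parallel} paths in $\Gamma_A$ from $P_b$ to $I_a$, namely $P_b\to P_a\rightsquigarrow S_a\rightsquigarrow I_a$ of length $1+r_a$ and $P_b\rightsquigarrow S_b\rightsquigarrow I_b\rightsquigarrow I_a$ of length $r_b+s$, where $s$ is the length of the path $I_b\rightsquigarrow I_a$. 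The with-length hypothesis then gives $1+r_a=r_b+s$; one checks $s=1$ when $a$ and $b$ are both interior to the same zero-relation or both outside all relations (parts (a) and (b)), and $s\geq 1$ in general (part (c), with $s>1$ exactly when $b$ is the endpoint of the relation). No bijection between the sets of maximal paths is needed: since all parallel paths have equal length, a single pair suffices. Your proposal for (a) instead promises a ``length-preserving bijection'' obtained by ``shifting along the relation,'' which you yourself flag as unverified and which conflates paths in $Q_A$ (radical layers of $P_a$) with paths of irreducible morphisms in $\Gamma_A$; these are not the same combinatorial objects, and the claimed coincidence of data ``up to the shift'' is exactly the statement to be proved.

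Two further concrete problems. First, in part (b) you write that the arrow $\alpha:a\to b$ induces $P_b\hookrightarrow P_a$ and $I_a\twoheadrightarrow I_b$; the second map points the wrong way (it is $I_b\to I_a$ that is irreducible, consistently with $P_b\to P_a$), and with your orientation the two paths one wants to compare no longer share endpoints, so the with-length hypothesis cannot be applied. The one-point extension machinery you invoke there is not needed and is not carried out. Second, in part (c) the inequality $r_a\geq r_b$ is reduced to the unsupported assertion that ``the zero-relation containing $a$ can only add further admissible factors to the path, never shorten it.'' The actual reason is quantitative: for adjacent $a\in(R_A)_0$ and $b\notin(R_A)_0$ the identity $1+r_a=r_b+s$ with $s\geq 1$ yields $r_a\geq r_b$, and the general case follows by chaining (a), (b) and this adjacent case along a walk from $b$ to $a$. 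Without the parallel-path identity, none of the three parts is established.
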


\begin{proof}
(a) Let $a,b$  be two vertices involved in the same zero-relation of $I_A$. Without loss of generality, we may assume that there is an arrow from $a$ to $b$ in $Q_A$. Hence any morphism from $P_b$ to $P_a$ is irreducible, since  $P_b$ is a direct summand of $\mbox{rad}\, P_a$.

On the other hand, a morphism from $I_b$ to $I_a$ is irreducible too.
%In fact, either $I_a =I_b /\mbox{soc}\,I_b$ or $I_a$ is a direct summand of $I_b /\mbox{soc}\,I_b$.
Therefore, we have the following  situation in $\Gamma_A$

\begin{equation} \label{path}
 \xymatrix @!0 @R=0.4cm  @C=1.6cm  { & P_a \ar@{~>}[rr]  &  & S_a \ar@{~>}[rdd]   &   \\
 &&&& \\
P_b \ar[ruu] \ar@{~>}[rdd] & &   &  & I_a  \\
      & & &  &   \\
      & S_b  \ar@{~>}[rr]  & &{I_b} \ar@{~>}[ruu]^{\gamma}   &\\}
 \end{equation}

\noindent where the length of the path $\gamma$ from ${I_b}$ to ${I_a}$ is one.

Since the Auslander-Reiten quiver of $\mbox{mod}\, A$ is a component with length,  then the length of any path from $P_a$ to $I_a$ is equal to the length of any path from $P_b$ to $I_b$, proving the result.

(b) With similar arguments than in the proof of the above statement, we get the result.

(c) Let $a \in (R_A)_0$ and $b \in (Q_A)_0 \backslash (R_A)_0$. Again, without loss of generality,  we may assume that there is an arrow from $a$ to $b$ in $Q_A$. Then any morphism from $P_a$ to $P_b$ is  irreducible. Moreover, there is a non-zero morphism from  $I_a$ to $I_b$ of length greater than or equal to one. Precisely, if $\alpha_m \dots \alpha_1$ is the zero-relation in $I_A$ and $b$ is such that $e(\alpha_m)=b$  then the length  of any path from  $P_a$ to $I_a$  is greater than the length of any path from  $P_b$ to $I_b$. In any other case, the length is equal.

Note that we have paths in  $\Gamma_A$ as in $(\ref{path})$, where the length of $\gamma$ is equal to $s$, with $s \geq 1$.
Assume that the length of  the paths from  $P_a$ to $I_a$ going through $S_a$ is $n$ and that the length of the paths from  $P_b$ to $I_b$ going through $S_b$ is $l$. Since the Auslander-Reiten quiver is a component  with  length,  then $1+n=l+ s$. Therefore $n \geq l$.
\end{proof}

In case there is an overlapped relation it is sufficient to analyze the vertices that belong to the intersection of both relations as we prove below.

\begin{lema}\label{nonzero-vertex}
Let $A \simeq kQ_A/I_A$ be a monomial algebra where $I_A$ has two paths overlap  each other and where $\Gamma_A$ is a component with length. The following conditions hold.
\begin{enumerate}
\item[(a)] If $a$ and $b$ are vertices in $(R_A)_0$ involved in the intersection of the two overlapped paths, then the length of any path from $P_a$ to $I_a$ is equal to the length of any path from $P_b$ to $I_b$.
\item[(b)] If $a$ is a vertex in $(R_A)_0$ involved in the intersection of the two overlapped paths, then the length of the paths from $P_a$ to $I_a$ are greater than or equal to the length of the paths from $P_b$ to $I_b$, where $b$ is a vertex not involved in the intersection of the two overlapped paths.
\end{enumerate}
\end{lema}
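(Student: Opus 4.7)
The plan is to mirror the argument of Lemma \ref{zero-vertex}, the new ingredient being that vertices lying in the intersection of two overlapping relations are subject to \emph{both} relations simultaneously, whereas vertices outside the intersection are subject to only one (or none). As in that proof, the key will be to exhibit parallel paths in $\Gamma_A$ whose lengths can be read off from the diagram $(\ref{path})$ and equated (or compared) by the component-with-length hypothesis.

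For part (a), I would first reduce to the case where $a$ and $b$ are adjacent in $Q_A$: the vertices involved in the intersection of two overlapping zero-relations form a connected sub-path of $Q_A$, so by iterating the claim along consecutive pairs it suffices to treat adjacent $a$ and $b$. Without loss of generality assume there is an arrow $a\to b$ lying strictly inside the overlap. Because this arrow is interior to both relations (it is neither the first nor the last arrow of either), $P_b$ is a direct summand of $\mbox{rad}\,P_a$, so the canonical morphism $P_b\to P_a$ is irreducible; dually $I_b\to I_a$ is irreducible. The diagram $(\ref{path})$ then gives two parallel paths from $P_b$ to $I_a$ of lengths $1+r_a$ and $r_b+1$, and the component-with-length hypothesis forces $r_a=r_b$.

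For part (b), I would again reduce to adjacent $a,b$, with $a$ in the intersection and $b$ just outside of it (so that $a\to b$ or $b\to a$ is a boundary arrow of the overlap region). The morphism $P_b\to P_a$ (or its dual) remains irreducible by the same reasoning, but on the opposite side the situation is asymmetric: leaving the intersection means that one of the two zero-relations no longer cuts off the path from $I_b$ to $I_a$ (or from $P_a$ to $P_b$), so there is a non-zero path of length $s\ge 1$ connecting these injectives. The same parallel-paths diagram then yields $1+r_a=r_b+s$ with $s\ge 1$, so $r_a\ge r_b$, exactly as in Lemma \ref{zero-vertex}(c).

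The main obstacle I expect is bookkeeping: one has to pin down, in each sub-case, precisely which arrows sit at the source or target of which relation, in order to justify that $P_b\to P_a$ and $I_b\to I_a$ are irreducible in the interior case, and that the connecting path has length $\ge 1$ in the boundary case. Once this local analysis is handled, the parallel-path argument from Lemma \ref{zero-vertex} goes through verbatim.
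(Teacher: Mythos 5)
Your proposal is correct and follows essentially the same route as the paper, which simply defers both parts to the parallel-path argument of Lemma \ref{zero-vertex}: reduce to adjacent vertices, observe that for an arrow interior to the overlap both $P_b\to P_a$ and $I_b\to I_a$ are irreducible (giving $1+r_a=r_b+1$), while at the boundary one of the two connecting morphisms degenerates to a path of length $s\geq 1$ (giving $1+r_a=r_b+s$), and conclude from the component-with-length hypothesis. The only cosmetic difference is in (b): the paper organizes the comparison by noting that a vertex $b$ outside the intersection fails to be involved in at least one of $\gamma_1,\gamma_2$ and then applies the single-relation argument of Lemma \ref{zero-vertex}(c) with respect to that relation, whereas you phrase it as a boundary crossing of the overlap region; both versions rest on the same implicit reduction to adjacent vertices that the paper itself leaves unjustified.
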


\begin{proof}
(a) Consider the intersection of the two paths overlap each other. Let $a$  and $b$ be vertices involved in such intersection. With similar arguments that in the proof of Lemma \ref{zero-vertex} (a),  we get the result.

(b) Let $a$  be a vertex involved in the intersection of the overlapped  zero-relations, let say $a \in \gamma_1 \cap \gamma_2$. First, consider the vertex $a \in \gamma_1$. With a similar proof that in Lemma \ref{zero-vertex} (b), for any vertex  $b$ not involved in $\gamma_1$  we have that the length of the paths from $P_a$ to $I_a$ are greater than or equal to the length of any path from $P_b$ to $I_b$.

On the other hand, if we consider $a$ in $\gamma_2$ then again with a similar proof that in Lemma \ref{zero-vertex} (b), for any vertex  $b$ not involved in $\gamma_2$  we have that the length of the paths from $P_a$ to $I_a$ are greater than or equal to the length of any path from $P_b$ to $I_b$.
\end{proof}

Let $\mathcal{S} \subset (R_A)_0$ be defined as follows:
\begin{enumerate}
\item[$\bullet$] for each two paths $\gamma_1$ and $\gamma_2$ in $I_A$ overlap each other, we chose a vertex $a$
involved in $\gamma_1 \cap \gamma_2$, and
\item [$\bullet$]  for each different not overlapped zero-relation $\rho$ in $I_A$, we chose a vertex $b$ involved in $\rho$.
\end{enumerate}

By Theorem \ref{cla} and the above lemmas we precise the result state in Theorem A.

\begin{teo}\label{vertex-nilpo}
Let $A \simeq kQ_A/I_A$ be a representation-finite monomial algebra. Assume that $\Gamma_A$ is a component with length. The vertices of $Q_A$ that we have to consider in order to determine the nilpotency index of the radical of $\emph{mod}\,A$ are the ones involved in the zero-relations of $I_A$.
Precisely,  $r_A = \emph{max}\, \{ r_{a}+1 \}_{a \in \mathcal{S}}$.
\end{teo}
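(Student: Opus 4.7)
The plan is to combine Theorem \ref{cla} with the technical lemmas \ref{zero-vertex} and \ref{nonzero-vertex} that have just been proved, using connectedness of $Q_A$ to propagate local equalities/inequalities of the $r_a$ into a global statement. By Theorem \ref{cla}, $r_A=\max\{r_a+1\}_{a\in(Q_A)_0}$, so the task reduces to showing that the maximum of $r_a$ over $(Q_A)_0$ is attained at some $a\in\mathcal{S}$. If $(R_A)_0=\emptyset$ then, since $A$ is monomial, $I_A=0$ and $A$ is hereditary, so the statement is vacuous for $\mathcal{S}$ and the nilpotency index is given by Theorem \ref{ppalher}. We may therefore assume $(R_A)_0\neq\emptyset$, and hence $\mathcal{S}\neq\emptyset$.

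First I would argue that the maximum of $r_a$ over $(Q_A)_0$ is attained at some vertex of $(R_A)_0$. Pick $a^\star$ with $r_{a^\star}$ maximal; by Theorem \ref{nilpopf} we may assume $a^\star$ is neither a sink nor a source. If $a^\star\in(R_A)_0$ we are done; otherwise, using connectedness of $Q_A$, choose a walk in the underlying graph $\overline{Q_A}$ from $a^\star$ to some vertex in $(R_A)_0$. As we traverse arrows between consecutive vertices both of which lie outside $(R_A)_0$, Lemma \ref{zero-vertex}(b) gives equality of the corresponding $r$-values (its statement is symmetric in the endpoints of the arrow, so the direction is immaterial). At the first step where the walk enters $(R_A)_0$, Lemma \ref{zero-vertex}(c) yields that the $r$-value cannot decrease. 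Hence there exists $a'\in(R_A)_0$ with $r_{a'}\geq r_{a^\star}$, and by maximality $r_{a'}=r_{a^\star}$.

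Next I would reduce from $(R_A)_0$ to $\mathcal{S}$ using the classification built into the definition of $\mathcal{S}$. For each zero-relation $\rho$ not overlapped with another, Lemma \ref{zero-vertex}(a) (applied inductively along the arrows of $\rho$) shows that $r_a$ is constant on the vertices involved in $\rho$, so a single chosen representative suffices. For each overlapping pair $\gamma_1,\gamma_2$, Lemma \ref{nonzero-vertex}(a) shows that $r_a$ is constant on the vertices in $\gamma_1\cap\gamma_2$, and Lemma \ref{nonzero-vertex}(b) shows that these values dominate the $r_b$ for the remaining vertices of $\gamma_1$ and $\gamma_2$. Combining these steps, every vertex of $(R_A)_0$ has $r_a\leq r_{a'}$ for some $a'\in\mathcal{S}$, and therefore $\max_{a\in(Q_A)_0}r_a=\max_{a\in\mathcal{S}}r_a$, which together with Theorem \ref{cla} yields the claimed formula. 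The main obstacle is the first step: making the connectedness/propagation argument rigorous, since \emph{a priori} the walk from $a^\star$ to $(R_A)_0$ could pass through sinks or sources, but the cited lemmas only control pairs of adjacent vertices; this is resolved by observing that Lemma \ref{zero-vertex}(b), (c) do not actually restrict to $\mathcal{V}$ and apply to any arrow of $Q_A$.
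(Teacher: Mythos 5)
Your proposal is correct and follows exactly the route the paper intends: the paper states Theorem \ref{vertex-nilpo} as an immediate consequence of Theorem \ref{cla} together with Lemma \ref{zero-vertex} and Lemma \ref{nonzero-vertex}, leaving the combination implicit, and your write-up simply makes that combination explicit (the connectedness/propagation step via Lemma \ref{zero-vertex}(b),(c) and the reduction from $(R_A)_0$ to $\mathcal{S}$ via Lemma \ref{zero-vertex}(a) and Lemma \ref{nonzero-vertex}). No genuinely different idea is introduced, and your handling of walks through sinks or sources is a fair reading of the lemmas as stated.
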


As an immediate consequence of Theorem \ref{vertex-nilpo} and the fact that the Auslander-Reiten quiver of a representation-finite monomial tree algebra is  with length, we have the following corollary.

\begin{coro}\label{coro-vertex}
Let $A \simeq kQ_A/I_A$ be a representation-finite algebra of tree type. The vertices that we have to  consider in order to determine the nilpotency index of the radical of the module category of $A$ are the ones involved in $\mathcal{S} \subset (R_A)_0$.
\end{coro}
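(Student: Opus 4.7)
The plan is to reduce the corollary to Theorem \ref{vertex-nilpo} by verifying its two hypotheses, namely that $A$ is monomial and that $\Gamma_A$ is a component with length. Since both requirements follow from the assumption that $\overline{Q_A}$ is a tree together with representation-finiteness, the argument is essentially a bookkeeping exercise linking the three ingredients together.

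First I would establish monomiality. Because $\overline{Q_A}$ is a tree, between any two vertices of $Q_A$ there is at most one path. Given a minimal relation $\rho = \sum \lambda_i w_i$ in $I_A$ (with all $w_i$ sharing a common source and a common target), the tree hypothesis forces the sum to have at most one term; hence each minimal relation is a scalar multiple of a single path, i.e., a zero-relation. Thus $A\simeq kQ_A/I_A$ is a monomial algebra, which is exactly the input needed by Theorem \ref{vertex-nilpo}.

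Next I would argue that $\Gamma_A$ is a component with length. Since $A$ is representation-finite with tree-shaped ordinary quiver, $A$ is simply connected, and consequently the orbit graph $O(\Gamma_A)$ of its Auslander-Reiten quiver is of tree-type. By the result of Bongartz-Gabriel recalled in \ref{orbit}, this forces $\Gamma_A$ to be simply connected as a translation quiver, hence a component with length in the sense of \cite{CPT}. At this stage both hypotheses of Theorem \ref{vertex-nilpo} are in place.

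Finally I would invoke Theorem \ref{vertex-nilpo} directly to obtain $r_A = \emph{max}\,\{r_a+1\}_{a\in\mathcal{S}}$, which is exactly the assertion of the corollary. The only non-trivial step is the component-with-length claim, but this is a standard consequence of the simple-connectedness that tree quivers automatically enjoy in the representation-finite setting, so I do not expect any real obstacle: the corollary is genuinely immediate once Theorem \ref{vertex-nilpo} has been proved.
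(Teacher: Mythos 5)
Your proposal is correct and follows essentially the same route as the paper, which presents the corollary as an immediate consequence of Theorem \ref{vertex-nilpo} together with the fact that the Auslander--Reiten quiver of a representation-finite monomial tree algebra is a component with length. You merely make explicit the two details the paper leaves implicit (a tree quiver admits no parallel paths, hence every relation is a zero-relation and $A$ is monomial; and simple connectedness of a representation-finite tree algebra forces the orbit graph to be of tree type, so $\Gamma_A$ is with length by the Bongartz--Gabriel result recalled in \ref{orbit}), which matches the justification the paper itself uses elsewhere, e.g.\ in the proof of Lemma \ref{9}.
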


\begin{ej}
\emph{Consider the algebra $A=Q_A/I_A $ given by the presentation:
}
\vspace{.1in}

$${\xymatrix   @R=0.4cm  @C=0.6cm {
1\ar@{--}@/^{5mm}/[rrrr]\ar[r]_{\alpha_1}&2\ar@{--}@/^{5mm}/[rrrr]\ar[r]_{\alpha_2}&3\ar[r]_{\alpha_3}&4\ar[r]_{\alpha_4}
&5\ar[r]_{\alpha_5}&6\ar@{--}@/^{5mm}/[rrr]&7\ar[l]^{\beta_1}&8\ar[l]^{\beta_2}&9\ar[l]^{\beta_3}&10\ar[l]
%\ar@{--}@/_{5mm}/[rrrrr]\ar[r]^{\alpha}&2\ar[r]^{\beta}&3\ar[r]^{\gamma}&5\ar[r]^{\delta}&6\ar[r]^{\lambda}&7&&
}}$$

\noindent \emph{where $I_A=<\gamma_1,\gamma_2,\rho>$, with $\gamma_1=\alpha_4\alpha_3\alpha_2\alpha_1$, $\gamma_2=\alpha_5\alpha_4\alpha_3\alpha_2$ and $\rho=\beta_3\beta_2 \beta_1$.}

\emph{We can observe that $\gamma_1$ and $\gamma_2$ are two path of $I_A$ overlap each other,
and the vertices $3$ and $4$ are the ones that are involved in $\gamma_1\cap \gamma_2$.
On the other hand, $\rho$ is the unique zero-relation that is not overlapped, and
the vertices $7$ and $8$ are the ones that are involved in it. Then we can
choose, for instance, the set $\mathcal{S}=\{3,7\},$ and following Theorem \ref{vertex-nilpo}
we have that
$$r_A=\mbox{max},\{r_3+1,r_7+1\}.$$}
\end{ej}

\subsection{On the nilpotency index of a toupie algebra with commutative relations}
A finite non linear quiver $Q$ is called a {\it toupie} if it has a unique source, a
unique sink, and for any other vertex $x$ in $Q$ there is exactly one arrow starting
in $x$ and exactly one arrow ending in $x$.

In case  $A \simeq kQ_A/I_A$  is  a toupie algebra with only commutative relations, then it is simply connected and therefore the Auslander-Reiten quiver is a component with length. In addition, if  $A$ is representation-finite then it at most three branches. We shall prove that in this case the nilpotency index of the radical of their module category is given by the length of the path from $P_x$ to $I_x$ where $x$ is a vertex in a branch of  $Q_A$ that has the shortest length among all branches of the given toupie. Precisely, we prove the following result.

\begin{teo}\label{vertex-commutative}
Let $A \simeq kQ_A/I_A$ be the representation-finite algebra

$$\xymatrix @!0 @R=1.0cm  @C=1.3cm {&x_1\ar[r]&\ar@{.}[r]&\ar[r]&x_{n_1} \ar[rd]& \\
a\ar[r]\ar[ru]\ar[rd]&y_1\ar[r]&\ar@{.}[r]&
\ar[r]&y_{n_2}\ar[r]&b\\
&z_1\ar[r]&\ar@{.}[r]&\ar[r]&z_{n_3}\ar[ru]&\\} $$

\noindent with commutative relations. Then any vertex of $Q_A$ which is in the  branch that has the shortest length among all branches of $Q_A$, and that is not the sink or the source of $Q_A$,  is the one that  we have to consider in order to determine the nilpotency index of the radical of $\emph{mod}\,A$.\end{teo}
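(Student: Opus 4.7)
The plan is to reduce the computation to a comparison of three numbers, one per branch, and then show that the shortest branch yields the largest one. Since the ideal $I_A$ is generated only by commutative relations, any two paths from $a$ to $b$ in $Q_A$ are identified, so $A$ is simply connected; hence the orbit graph of $\Gamma_A$ is of tree type and, as recalled in Section \ref{orbit}, $\Gamma_A$ is a component with length. By Theorem \ref{nilpopf} we then have $r_A = \max\{r_v+1 : v\in\mathcal{V}\}$, where $\mathcal{V}$ is the set of intermediate (neither sink nor source) vertices of $Q_A$. So the problem reduces to determining for which $v\in\mathcal{V}$ the value $r_v$ is maximal.

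First I would show that if $v$ and $v'$ are two intermediate vertices lying on the same branch, then $r_v=r_{v'}$. This follows the argument of Lemma \ref{zero-vertex}(b): assuming an arrow $v\to v'$ along the branch, $P_{v'}$ is a direct summand of $\mathrm{rad}\,P_v$ and $I_v$ is a direct summand of $I_{v'}/\mathrm{soc}\,I_{v'}$, producing irreducible morphisms $P_{v'}\to P_v$ and $I_{v'}\to I_{v}$ in $\Gamma_A$. One then obtains in $\Gamma_A$ a picture analogous to diagram (\ref{path}) with $\gamma$ of length one, and since $\Gamma_A$ is with length, any path from $P_v$ to $I_v$ through $S_v$ has the same length as any path from $P_{v'}$ to $I_{v'}$ through $S_{v'}$. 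Denote this common value by $r^{(i)}$, where $i\in\{1,2,3\}$ is the index of the branch containing $v$. Hence Theorem \ref{vertex-commutative} is equivalent to the inequalities $r^{(1)}\ge r^{(2)}$ and $r^{(1)}\ge r^{(3)}$ whenever $n_1\le n_2\le n_3$.

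To compare $r^{(i)}$ and $r^{(j)}$ with $n_i\le n_j$, I would build the toupie by one-point extensions and coextensions from a shorter sub-toupie and track paths via Propositions \ref{lema S-S} and \ref{dual lema S-S}. Concretely, view $A$ as obtained from the full subalgebra $A'$ corresponding to the subquiver of $Q_A$ with the $j$-th branch truncated at a vertex, by a sequence of one-point extensions that lengthen that branch; each extension, thanks to the commutative relation pinning the new branch to the other two, leaves the almost split sequences that sit in a suitable $\mathrm{Succ}\,P_{x_1}\cap\mathrm{Pred}\,I_{x_1}$ untouched, so the path from $P_{x_1}$ to $I_{x_1}$ through $S_{x_1}$ is preserved and $r^{(1)}$ does not decrease. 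Conversely, the newly appearing modules supported on the extended branch, when $x_1$ is replaced by the corresponding vertex $y_1$ of that longer branch, shorten the path from $P_{y_1}$ to $I_{y_1}$ through $S_{y_1}$ because of the length-with property and the extra detours available on the shorter sides. Combining these, $r^{(1)}\ge r^{(j)}$ for $j=2,3$.

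The main obstacle is the last step: producing explicit paths from $P_{x_1}$ to $I_{x_1}$ through $S_{x_1}$ whose length is visibly larger than those on the longer branches, and guaranteeing via the length-with property that no longer path can exist on the longer branches. This requires a careful description of the indecomposables of a representation-finite toupie with commutative relations (in particular, identifying for each branch which indecomposables appear in the sectional and non-sectional paths from $P_{v}$ to $I_{v}$), which is a combinatorial but delicate case analysis; alternatively, it can be carried out inductively by constructing $A$ as a linearly oriented pullback of two sub-toupies glued along the source and sink, and applying the one-point (co)extension results to relate the $r^{(i)}$ of $A$ to those of the pieces.
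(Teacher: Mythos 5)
Your first two steps (reducing to intermediate vertices via Theorem \ref{nilpopf}, and showing that all intermediate vertices on a single branch share the same value $r_v$ by the argument of Lemma \ref{zero-vertex}) match the paper and are fine. But at the decisive step --- comparing $r^{(i)}$ across different branches --- you abandon the tool that does all the work and replace it with an incomplete one-point-extension induction whose key claims (that extending a long branch ``leaves the relevant almost split sequences untouched'' and that the new modules ``shorten the path'' on the longer branch) are asserted, not proved; you acknowledge this yourself when you call the last step the ``main obstacle.'' That is a genuine gap: as written, the inequality $r^{(1)}\ge r^{(j)}$ is not established.

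The idea you are missing is much simpler and stays entirely inside the with-length property you already invoked. For each branch $w\in\{x,y,z\}$ there is a path of irreducible morphisms
$$P_b \rightarrow P_{w_{n_i}} \rightarrow \dots \rightarrow P_{w_1} \rightsquigarrow I_{w_1} \rightarrow I_a$$
obtained by concatenating the chain of irreducible maps between the projectives along that branch, a path from $P_{w_1}$ to $I_{w_1}$, and the irreducible map $I_{w_1}\rightarrow I_a$. These three paths are \emph{parallel}: they all start at $P_b$ and end at $I_a$. Since $\Gamma_A$ is a component with length, they have equal length, i.e.\ $n_i + \ell(P_{w_1}\rightsquigarrow I_{w_1}) + 1$ is the same constant for $i=1,2,3$. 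Hence $\ell(P_{w_1}\rightsquigarrow I_{w_1})$ is maximal precisely for the branch with the smallest $n_i$, which is the statement of the theorem. No induction on extensions, and no description of the indecomposables of the toupie, is needed.
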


\begin{proof} Consider all the vertices in a branch of $Q_A$, which are not the sink or the source of $Q_A$. Without loss of generality, we may consider the vertices $x_1, \dots, x_{{n_1}}$. Then $r_{x_i}=r_{x_j}$,
%$\ell(P_{x_{i}} \rightsquigarrow I_{x_{i}}) = \ell(P_{x_{j}} \rightsquigarrow I_{x_{j}})$,
for $i \neq j$ and $1 \geq i,j \leq n_1$. In fact, assume that $i <j$. Then there are paths of irreducible morphisms in $\Gamma_A$ as follows:
$$P_{x_{j}} \stackrel{\gamma} \rightsquigarrow I_{x_{j}} \rightarrow I_{x_{j-1}} \rightarrow \dots \rightarrow I_{x_{i}}$$
\noindent and
$$P_{x_{j}} \rightarrow P_{x_{j-1}} \rightarrow \dots \rightarrow  P_{x_{i}} \stackrel{\gamma'}\rightsquigarrow I_{x_{i}},$$
\noindent where $I_{x_{j}} \rightarrow I_{x_{j-1}} \rightarrow \dots \rightarrow I_{x_{i}}$ and $P_{x_{j}} \rightarrow P_{x_{j-1}} \rightarrow \dots \rightarrow  P_{x_{i}}$ are  paths of irreducible morphisms between indecomposable injective and projective $A$-modules, respectively, of the same length.
Since $\Gamma_A$ is a component with length then clearly $\ell({\gamma})  =\ell({\gamma'})$.

On the other hand, there are paths of irreducible morphisms between indecomposable modules in $\Gamma_A$  as follows:

$$P_b \rightarrow P_{x_{n_1}}\rightarrow \dots \rightarrow P_{x_1} \rightsquigarrow I_{x_1}\rightarrow I_a,$$ $$P_b \rightarrow P_{y_{n_2}}\rightarrow \dots \rightarrow P_{y_1} \rightsquigarrow I_{y_1}\rightarrow I_a,$$ and $$P_b \rightarrow P_{z_{n_3}}\rightarrow \dots \rightarrow P_{z_1} \rightsquigarrow I_{z_1}\rightarrow I_a.$$

Since $\Gamma_A$ is a component with length, then  the path of longest length from $P_{w_{1}}$ to $I_{w_{1}}$ with $w \in \{x,y,z \}$ is the path whose vertex belongs to the shortest branch of $Q_A$, proving the result.
\end{proof}

\subsection{On the nilpotency index of a string algebra} In this section, we prove that it is enough to consider the vertices involved in the zero-relations to determine the nilpotency index of $\Re(\mbox{mod}\, A)$ whenever $A$ is a representation-finite string algebra. Note that in this algebras, the Auslander-Reiten quiver is not necessarily a component with length.
\vspace{.05in}

Let $A=kQ_A/I_A$ be a representation-finite string algebra. Following \cite{CG}, for each $u \in (Q_{A})_0$ we consider the  quivers $Q_u^e$ and $Q_u^s$ defined as follow:
\benu
\item[(a)]
\begin{enumerate}
	\item The vertices of $(Q_u^e)_0$ are the strings $C$ in $Q_A$ such that $e(C)=u,$ where $C$ is either the trivial walk $\eps_u$ or $C=\alpha C'$, with $\al \in (Q_{A})_1$.
	\item If $a=C$ and $b=C'$ are two vertices of $(Q_u^e)_0$, then there is an arrow
	$a\fle b$ in $Q_u^e$ if $C'$ is the reduced walk of $C\be^{-1}$, for some $\beta \in (Q_{A})_1$.
\end{enumerate}
\item[(b)]
\begin{enumerate}
	\item The vertices of $(Q_u^s)_0$ are the strings $C$  in $Q_A$ such that $s(C)=u$, where $C$ is either the trivial walk $\eps_u$ or $C=C'\alpha $, with $\alpha \in (Q_{A})_1$.
	\item If $a=C$ and $b=C'$ are two vertices of $(Q_u^s)_0,$ then there is an arrow
	$a \rightarrow b$ in $Q_u^s$ if $C'$ is the reduced walk of $\beta C$, for some $\beta \in (Q_{A})_1$.
\end{enumerate}
\enu

We recall from \cite[Proposition 3.2]{CG}  that $r_u=|(Q_u^s)_0|+|(Q_u^e)_0|-2$,  where $|\chi|$ denotes the cardinal of the set $\chi$.

Now, we are in position to prove the following result.

\begin{prop}
Let $A=kQ_A/I_A$ be a representation-finite string algebra with $I_A \neq 0$. Assume $x$ is a vertex in $Q_A$ which is neither a sink nor a source. Moreover, assume that $x$ is not involved in a zero-relation of $I_A$.  Then, there is $y\in (Q_{A})_0$ involved in a zero-relation of $I_A$ such that $r_x\leq r_y$.
\end{prop}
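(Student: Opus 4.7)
The plan is to walk from $x$ toward a vertex involved in a zero-relation along arrows of $Q_A$ and show that $r_{(-)}$ does not decrease along the walk. First, I would observe that under the hypotheses, $x$ has exactly one incoming arrow $\alpha\colon z\to x$ and exactly one outgoing arrow $\beta\colon x\to y$. If $x$ admitted two distinct incoming arrows $\alpha_{1},\alpha_{2}$, then for any outgoing arrow $\beta$ at $x$, both compositions $\beta\alpha_{1}$ and $\beta\alpha_{2}$ would lie outside $I_{A}$---otherwise $x=s(\beta)$ would be an interior vertex of a length-$2$ zero-relation, contradicting the hypothesis that $x$ is not involved---violating condition $(2)$ of string algebras. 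Condition $(2')$ symmetrically excludes two outgoing arrows. The same argument shows $\beta\alpha\notin I_{A}$.

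The heart of the proof is the monotonicity $r_{x}\leq r_{y}$ (and dually $r_{x}\leq r_{z}$). Write $\widetilde{W}_{u}$ for the set of all strings starting at $u$. The inversion bijection sending a non-trivial string at $u$ whose first applied letter is an inverse arrow $\nu^{-1}$ to its reverse (which lies in $(Q_{u}^{e})_{0}\setminus\{\varepsilon_{u}\}$, with last letter the forward arrow $\nu$) gives $|\widetilde{W}_{u}|=|(Q_{u}^{s})_{0}|+|(Q_{u}^{e})_{0}|-1=r_{u}+1$. Partition $\widetilde{W}_{x}=W_{x}(\beta)\sqcup V_{x}^{\beta}$, where $W_{x}(\beta)$ gathers the strings with first applied letter $\beta$ and $V_{x}^{\beta}$ is its complement; analogously $\widetilde{W}_{y}=V_{y}\sqcup W_{y}(\beta^{-1})$. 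Define an injection $\psi\colon\widetilde{W}_{x}\to\widetilde{W}_{y}$ by $\psi(C'\beta)=C'$ on $W_{x}(\beta)$ and $\psi(C)=C\beta^{-1}$ on $V_{x}^{\beta}$. Reducedness of the walk $C'\beta$ forces the letter following $\beta$ to differ from $\beta^{-1}$, so $C'\in V_{y}$. The crucial verification is that $C\beta^{-1}$ is itself a string for $C\in V_{x}^{\beta}$: any inverse sub-walk of $C\beta^{-1}$ involving $\beta^{-1}$ reverses to a forward path beginning with the arrow $\beta$ at $s(\beta)=x$, and such a path belonging to $I_{A}$ would exhibit $x$ as an interior vertex of a zero-relation, contradicting the hypothesis. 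Since both pieces of $\psi$ are injective with disjoint images in $\widetilde{W}_{y}$, the map $\psi$ is injective, so $r_{x}+1=|\widetilde{W}_{x}|\leq|\widetilde{W}_{y}|=r_{y}+1$; the dual construction, using the in-arrow $\alpha$, yields $r_{x}\leq r_{z}$.

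Finally, iterate: if $y$ or $z$ is involved, we are done. Otherwise the monotonicity lemma applies at $y$ (so long as $y$ is not a sink), producing a sequence $x=x_{0}\to x_{1}\to x_{2}\to\cdots$ along unique outgoing arrows with $r_{x_{0}}\leq r_{x_{1}}\leq\cdots$. Since $A$ is representation-finite, any cycle in $Q_{A}$ must contain an interior vertex of some zero-relation, so the walk cannot loop indefinitely and must terminate at a sink or an involved vertex. If the outgoing walk terminates at a sink before encountering an involved vertex, perform the dual iteration in the incoming direction from $x$. A chain through $x$ consisting entirely of non-involved vertices cannot fill the connected component of $x$ when $I_{A}\neq 0$, so at least one of the two walks reaches an involved vertex $y_{\star}$, giving $r_{x}\leq r_{y_{\star}}$ as required. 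The main obstacle is the monotonicity lemma: identifying the correct partition of $\widetilde{W}_{x}$ and $\widetilde{W}_{y}$, and then verifying that $C\beta^{-1}$ is a valid string, which is exactly the place where the hypothesis that $x$ is not involved in any zero-relation enters essentially.
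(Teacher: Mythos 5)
Your monotonicity lemma is correct and cleanly executed: the identity $|\widetilde{W}_u|=r_u+1$, the partition of $\widetilde{W}_x$ by first letter, and the verification that $C\beta^{-1}$ is a string precisely because $x$ is not involved in any zero-relation are all sound; this is essentially a local, one-arrow-at-a-time version of the counting the paper performs in one pass along a whole string from an involved vertex $y$ to $x$. The gap is in your final paragraph. Your iteration only follows \emph{directed} walks: forward from $x$ along outgoing arrows and, failing that, backward along incoming arrows, and the claim that ``at least one of the two walks reaches an involved vertex'' is false. Take the quiver with arrows $\alpha\colon s\to x$, $\beta\colon x\to t$, $\gamma\colon s\to a$, $\delta\colon a\to b$ and $I_A=\langle\delta\gamma\rangle$. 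This is a representation-finite string algebra, $x$ is neither a sink nor a source and is not involved, yet the forward walk from $x$ dies at the sink $t$ and the backward walk dies at the source $s$, and neither $t$ nor $s$ is involved (an involved vertex is by definition $s(\alpha_i)=e(\alpha_{i-1})$ for some $i\geq 2$, hence has both an incoming and an outgoing arrow, so sinks and sources are never involved). The unique involved vertex $a$ sits on a branch hanging off the source $s$, and your directed walks cannot turn the corner at $s$ to reach it; the ``cannot fill the connected component'' remark does not bridge this.

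The repair is available from what you already proved: your lemma gives $r_x\leq r_{x'}$ for \emph{every} neighbour $x'$ of a non-involved vertex $x$, whether joined by an outgoing or an incoming arrow (you state the dual yourself), and its proof nowhere uses that $x$ has a unique arrow in each direction, so it applies equally at intermediate sources and sinks. Hence you should iterate along a walk in the underlying graph (equivalently, along a string) from $x$ to a nearest involved vertex $y$; such a $y$ exists because $I_A\neq 0$ and $Q_A$ is connected, and all interior vertices of a shortest such walk are non-involved, so the inequalities chain to give $r_x\leq r_y$. This is exactly the shape of the paper's argument, which fixes a string $C$ from an involved vertex $y$ to $x$ none of whose interior vertices is involved and compares the string sets at $x$ and $y$ directly. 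As written, however, your termination step does not establish the proposition.
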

\begin{proof}
Consider  $x\in (Q_{A})_0$ a vertex which is neither a sink nor a source,  and such that  $x$  is not involved in a zero-relation of $I_A$. Since $Q_A$ is connected then there is a vertex $y\in (Q_{A})_0$  which is involved in a zero-relation of $I_A$ and moreover there is a string $C$ from $y$ to $x$ where none of the vertices of $C$ are involved in a zero-relation of $I_A$ (except for $y$). Note that in the vertex $x$ starts and ends exactly one arrow,  and moreover, in all the vertices of $C$ (except for $y$) there are at most
two edges.

Suppose that $C$ starts in an arrow. Then $C \in (Q_y^s)_0$. We have two cases to analyze; if either  $C$ ends in an arrow or if $C$ ends in the inverse of an arrow.

First, assume that $C$ ends in an arrow. Let $D \in (Q_x^s)_0$. Then $DC$ is a string, because $x$ is not involved in a zero-relation of $I_A$ and $DC \in (Q_y^s)_0$.
Since the string $DC\neq \eps_y$, then $\mbox{card}((Q_x^s)_0)\leq \mbox{card}((Q_y^s)_0)-1$.

Now, let $E\in (Q_x^e)_0$ with $\eps_x\neq E$. Then $E$ and $C$ end in an arrow. Hence $E^{-1}C$ is not a string, but we can consider the string obtained of $E^{-1}C$. Precisely, we have two possible situations:
\begin{enumerate}
\item $C=EC'$, with $C'$ a string, or
\item $E=CE'$, with $E'$ a string such that $E'\neq \eps_y$.
\end{enumerate}
If (1), then $C'$ is a string that belongs to $(Q_y^s)_0$. If (2), then ${E'}$ is a string. In case $E'$ ends in an arrow then $E'\in (Q_y^e)_0$. Otherwise, if $E'$ ends in the inverse of an arrow then $(E')^{-1}$ starts in an arrow $\delta$ with $s(\delta)=y$ and therefore $(E')^{-1}\in (Q_y^s)_0$.

Let $(Q_x^e)_1=\{E\,\mid\, C= EC'\}$, $(Q_x^e)_2=\{E\,\mid \, E=CE' \, \, \mbox{where} \, E' \,\,\mbox{ends in an arrow}\}$ and $(Q_x^e)_3=\{E\,\mid \, E=CE' \, \,\mbox{where} \, E' \,\, \mbox{ends in the inverse of an arrow}\}$. Then we can write $Q_x^e$ as the disjoint union
\[(Q_x^e)_0=  (Q_x^e)_1\sqcup (Q_x^e)_2 \sqcup (Q_x^e)_3.  \]

Since the strings in $Q_y^s$ that come from strings in $Q_x^s$ are different from the strings in $Q_y^s$ that come from strings in $Q_x^e-\{\eps_x\}$, then

%Since the strings in $Q_y^s$ that are also strings in $Q_x^s$ are different from the strings that belong to $Q_y^s$ and come from $Q_x^e-\{\eps_x\}$, then
\[|(Q_x^s)_0|+|(Q_x^e)_1|-1+|(Q_x^e)_3|\leq |(Q_y^s)_0|\]
\[|(Q_x^e)_2|\leq |(Q_y^e)_0|-1 .\]
\noindent Hence,
\[|(Q_x^s)_0|+|(Q_x^e)_0|\leq |(Q_y^s)_0|+|(Q_y^e)_0|\]
\noindent  and therefore, $r_x\leq r_y$.

Secondly, assume that $C$ ends in the inverse of an arrow. Let $E\in (Q_x^e)_0$. Then $E$ ends in an arrow and $e(E)=x$. Since $x$ is not involved in a zero-relation then $E^{-1}C$ is a string in $(Q_y^s)_0$.
Consider $D\in (Q_x^s)_0-\{\eps_x\}$. Since $x$ is neither a sink nor a source and it is not involved in a zero-relation of $I_A$, again we have two cases to analyze:
\begin{enumerate}
  \item[(3)] $C=D^{-1}C'$ or,
  \item[(4)] $D=D'C^{-1}$.
\end{enumerate}
If (3), then $C'$ is a string that starts in an arrow $\delta$ with $s(\delta)=y$. Then $C'\in (Q_y^s)_0$. If (4), then $D'$ is a string that starts in $y$. In case $D'$ starts in an arrow then $D'\in (Q_y^s)_0$. Otherwise, if $D'$ starts in the inverse of an arrow, then $(D')^{-1}\in (Q_y^e)_0$.

Let $(Q_x^s)_1=\{D\,\mid\, C=D^{-1}C'\}$, $(Q_x^s)_2=\{D\,\mid \, D=D'C^{-1} \; \mbox{where}\; D' \; \mbox{starts in an arrow}\}$ and  $(Q_x^s)_3=\{D\,\mid \, D=D'C^{-1} \; \mbox{where}\; D' \; \mbox{starts in the inverse of an arrow}\}$, again we can write $(Q_x^s)_0$ as the disjoint union

\[(Q_x^s)_0=(Q_x^s)_1\sqcup (Q_x^s)_2\sqcup(Q_x^s)_3.\]

Since the strings in $(Q_y^s)_0$ that come from strings in $(Q_x^e)_0$ are different from the strings in $(Q_y^s)_0$ that come from strings in $(Q_x^s)_0-\{\eps_x\}$, then

\[|(Q_x^e)_0|+|(Q_x^s)_1|-1+|(Q_x^s)_2|\leq |(Q_y^s)_0|\]
\[|(Q_x^s)_3|\leq |(Q_y^e)_0|-1.\]

\noindent Hence, $r_x\leq r_y$.

Finally, if $C$ starts in the inverse of an arrow, with a similar analysis as above we get that $r_x\leq r_y$.
\end{proof}

Let $A\simeq kQ_A/I_A$ be a  representation-finite string algebra. For
$\gamma\in (Q_A)_1$, we define the sets $\mathcal{S}_{\gamma}$ and $\mathcal{E}_{\gamma}$ whose elements are the strings
starting and ending in the arrow $\gamma$, respectively. Precisely,

$$\mathcal{S}_{\gamma}=\{C \text{ string in }Q_A\mid C=C'\gamma, \text{ with } C' \text{ a string in } Q_A \}$$

\noindent and

$$\mathcal{E}_{\gamma}=\{C \text{ string in } Q_A\mid C=\gamma C', \text{ with } C' \text{ a string in } Q_A \}. $$

\begin{prop}\label{relstring}
Let $A\simeq kQ_A/I_A$ be a  representation-finite string algebra and $\rho=\alpha_m\dots \alpha_1$ be a
non-overlapped zero-relation
in $I_A$, with $r\geq 3$. Let $x, y \in (Q_A)_0$ be two vertices involved in $\rho$ such that there
is an arrow $\alpha_i$ from $x$ to $y$, with $2\leq i\leq m-1$. Then, $r_x+|\mathcal{S}_{\delta}|=r_y+|\mathcal{E}_{\beta}|$,
where $\beta,\delta\in (Q_A)_1$ are  such that $e(\beta)=x$ with $\beta\neq \alpha_{i-1}$
and $s(\delta)=y$ with $\delta\neq  \alpha_{i+1}$.
\end{prop}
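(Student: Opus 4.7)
The approach is combinatorial, based on the formula $r_u = |(Q_u^s)_0| + |(Q_u^e)_0| - 2$ from \cite[Proposition 3.2]{CG}. I would partition $(Q_u^s)_0$ into the trivial string $\eps_u$ together with the sets $\mathcal{S}_\gamma$ for each arrow $\gamma$ with $s(\gamma)=u$, and dually for $(Q_u^e)_0$. Let $\delta_x$ denote a possible second arrow with $s(\delta_x)=x$, $\delta_x \ne \alpha_i$, and let $\beta_y$ denote a possible second arrow with $e(\beta_y)=y$, $\beta_y \ne \alpha_i$; if such arrows do not exist, interpret the corresponding cardinals as zero. This yields
\[
r_x = |\mathcal{S}_{\alpha_i}| + |\mathcal{S}_{\delta_x}| + |\mathcal{E}_{\alpha_{i-1}}| + |\mathcal{E}_\beta|, \qquad r_y = |\mathcal{S}_{\alpha_{i+1}}| + |\mathcal{S}_\delta| + |\mathcal{E}_{\alpha_i}| + |\mathcal{E}_{\beta_y}|.
\]

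The heart of the proof is establishing the two identities
\[
|\mathcal{S}_{\alpha_i}| = 1 + |\mathcal{S}_{\alpha_{i+1}}| + |\mathcal{E}_{\beta_y}|, \qquad |\mathcal{E}_{\alpha_i}| = 1 + |\mathcal{E}_{\alpha_{i-1}}| + |\mathcal{S}_{\delta_x}|.
\]
For the first, I would use the bijection $C = C'\alpha_i \mapsto C'$ between $\mathcal{S}_{\alpha_i}$ and the set of strings $C'$ starting at $y$ such that $C'\alpha_i$ is again a string. Reducedness forbids $C'$ from starting with $\alpha_i^{-1}$. Next, $\rho$ cannot occur as a sub-walk of $C'\alpha_i$: since $2\le i\le m-1$ and $\alpha_i$ is now the first letter, there is no room to the right of $\alpha_i$ for $\alpha_{i-1},\dots,\alpha_1$; the non-overlap hypothesis on $\rho$ rules out any other long relation containing $\alpha_i$. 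The only remaining obstruction is a length-$2$ relation $\gamma\alpha_i \in I_A$ with $s(\gamma)=y$; since $\alpha_{i+1}\alpha_i \notin I_A$ (a proper subpath of $\rho$, as $m \ge 3$), the string algebra axiom (2') applied to $\alpha_i$ at $y$ forces $\delta\alpha_i \in I_A$. Hence $C'$ also cannot start with $\delta$; the admissible options are $\eps_y$, strings in $\mathcal{S}_{\alpha_{i+1}}$, and strings starting with $\beta_y^{-1}$, the last being in bijection with $\mathcal{E}_{\beta_y}$ via the inversion $C \leftrightarrow C^{-1}$ of strings. A dual argument, using $\alpha_i\beta \in I_A$ (forced by axiom (2)), gives the second identity.

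Substituting both identities into the expressions for $r_x$ and $r_y$, the terms $|\mathcal{S}_{\alpha_{i+1}}|$, $|\mathcal{E}_{\alpha_{i-1}}|$, $|\mathcal{E}_{\beta_y}|$, $|\mathcal{S}_{\delta_x}|$ cancel in pairs together with the two additive constants, leaving $r_x - r_y = |\mathcal{E}_\beta| - |\mathcal{S}_\delta|$, which is equivalent to the stated equality. The main difficulty is isolating precisely which zero-relations can prevent $C'\alpha_i$ (respectively $\alpha_i D$) from being a string: the non-overlap hypothesis on $\rho$ together with the string algebra axioms (2) and (2') must be combined to show that, beyond the reducedness constraint, only the forced length-$2$ relations $\delta\alpha_i$ and $\alpha_i\beta$ need to be excluded, so that the discrepancy between $r_x$ and $r_y$ is accounted for exactly by $|\mathcal{E}_\beta|-|\mathcal{S}_\delta|$.
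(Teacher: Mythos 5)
Your proposal is correct and follows essentially the same route as the paper: the same decomposition of $(Q_x^s)_0$, $(Q_x^e)_0$, $(Q_y^s)_0$, $(Q_y^e)_0$ into trivial strings plus the sets $\mathcal{S}_{\gamma}$, $\mathcal{E}_{\gamma}$, the same two bijections $\mathcal{S}_{\alpha_i}\leftrightarrow\{\eps_y\}\cup\mathcal{S}_{\alpha_{i+1}}\cup\mathcal{E}_{\beta_y}$ and $\mathcal{E}_{\alpha_i}\leftrightarrow\{\eps_x\}\cup\mathcal{E}_{\alpha_{i-1}}\cup\mathcal{S}_{\delta_x}$, and the same cancellation via \cite[Proposition 3.2]{CG}. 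Your write-up is in fact somewhat more careful than the paper's, since you justify why only the forced length-two relations $\delta\alpha_i,\alpha_i\beta\in I_A$ obstruct the concatenations and you handle the degenerate case where the auxiliary arrows are absent.
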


\begin{proof}
Consider  $\rho, x$ and $y$  as in the statement of the proposition.
Suppose  that there exist arrows $\beta_1: w_1\rightarrow x,$ $\beta_2:x\rightarrow w_2,$
 $\delta_1: z_1\rightarrow y$ and $\delta_2:y\rightarrow z_2$ different from the arrows $\alpha_j$ of $\rho$.
For the convenience of the reader, we illustrate the situation as follows:

$$\xymatrix @!0 @R=1.0cm  @C=1.3cm {
 \bullet\ar[rd]_{ \alpha_{i-1}}&& z_1 \ar[rd]^{ \delta_1}&&\bullet\\
 &x\ar[rr]^{\alpha_i}\ar[rd]_{\beta_2}&&y\ar[ru]^{\alpha_{i+1}}\ar[rd]^{\delta_2}&\\
 w_1\ar[ru]^{ \beta_1}& &w_2 &&z_2} $$

Since $A$ is a string algebra, $\alpha_i\alpha_{i-1}$ and  $\alpha_{i+1}\alpha_{i}$ do not belong to $I_A$. Hence
we infer that $\alpha_i\beta_1$, $\beta_2\alpha_{i-1}$, $\alpha_{i+1}\delta_1$, and $\delta_2\alpha_{i}$ are in $I_A$.

Observe that $\mathcal{S}_{\alpha_i}=\{\alpha_i\}\cup\{C'\alpha_{i+1}\alpha_i\mid C' \text{ a string in } Q_A\}\cup
\{C''\delta_1^{-1}\alpha_i\mid C'' \text{ a string in } Q_A\}$. Hence, there
is a bijection between $\mathcal{S}_{\alpha_i}$ and $\{\eps_y\}\cup\mathcal{S}_{\alpha_{i+1}}\cup\mathcal{E}_{\gamma_1}$.

Similarly, we can establish a bijection between the sets $\mathcal{E}_{{\alpha}_i}$ and
$\{\eps_x\}\cup\mathcal{E}_{\alpha_{i-1}}\cup\mathcal{S}_{\beta_2}$.

By \cite[Proposition 3.2]{CG}, given $a\in (Q_A)_0$ we have that $r_a=|(Q^s_a)_0|+|(Q^e_a)_0|-2$.
Moreover, we have the following equalities of sets:
$(Q^s_x)_0=\{\eps_x\}\cup \mathcal{S}_{\alpha_i}\cup \mathcal{S}_{\beta_2}$,
$(Q^e_x)_0=\{\eps_x\}\cup \mathcal{E}_{\alpha_{i-1}}\cup \mathcal{E}_{\beta_1,}$
$(Q^s_y)_0=\{\eps_y\}\cup \mathcal{S}_{\alpha_{i+1}}\cup \mathcal{S}_{\gamma_2}$ and
$(Q^e_y)_0=\{\eps_y\}\cup \mathcal{E}_{\alpha_i}\cup \mathcal{E}_{\gamma_1}.$

Therefore, by the above bijections we deduce that $r_x+|\mathcal{S}_{\delta_2}|=r_y+|\mathcal{E}_{\beta_1}|,$
proving the result.
\end{proof}

The next corollary is an immediate consequence of Proposition \ref{relstring}.

\begin{coro}
With the above notations, let $\rho$ be the zero-relation $a\overset{\alpha_1}{\rightarrow }a_1\overset{\alpha_2}{\rightarrow }\dots {\rightarrow }a_{m-1}\overset{\alpha_m}{\rightarrow }b$. The following conditions hold.
\begin{enumerate}
\item[(i)] If  the vertices $a_i$, for $1\leq i\leq m-1$
    are  not involved in another relation of $I_A$, then $r_{a_i}=r_{a_j}$ for all $1\leq i,j\leq m-1.$
\item[(ii)] If $a_k$, with $1\leq k\leq m-1$, is a vertex which is involved in another relation of $I_A$, then
$r_{a_i}\leq r_{a_k}$ for all vertices  $a_{i}$ which are not involved in another relation different from $\rho$.
\end{enumerate}
\end{coro}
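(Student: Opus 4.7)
The plan is to apply Proposition \ref{relstring} iteratively along the chain of interior vertices of $\rho$, combined with a structural lemma ruling out extra arrows at vertices uninvolved in other relations.

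The key observation, which I would establish first, is that if $a_j$ is not involved in any zero-relation different from $\rho$, then the only arrows incident to $a_j$ are $\alpha_j$ (incoming) and $\alpha_{j+1}$ (outgoing). Indeed, note first that $\alpha_{j+1}\alpha_j \notin I_A$: otherwise this length-two path would itself be a relation distinct from $\rho$ in which $a_j = s(\alpha_{j+1})$ is involved, contradicting the hypothesis. Now, if an extra arrow $\beta$ ended at $a_j$, string-algebra axiom (2) applied to $\alpha_{j+1}$ would force $\alpha_{j+1}\beta \in I_A$; but this exhibits $a_j$ as involved in a relation different from $\rho$, again a contradiction. Axiom (2') dually rules out extra arrows starting at $a_j$.

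For part (i), every $a_i$ with $1 \leq i \leq m-1$ has no extras at all; applying Proposition \ref{relstring} to each consecutive pair $(a_{i-1}, a_i)$ for $2 \leq i \leq m-1$ makes the two correction terms $|\mathcal{S}_\delta|$ and $|\mathcal{E}_\beta|$ both vanish, so the formula collapses to $r_{a_{i-1}} = r_{a_i}$. Iterating over $i$ yields the whole chain of equalities $r_{a_1} = r_{a_2} = \cdots = r_{a_{m-1}}$.

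For part (ii), given an uninvolved $a_i$, I would walk along the interior vertices from $a_i$ toward $a_k$ until reaching the first involved vertex $a_j$ (so all vertices strictly between $a_i$ and $a_j$ are uninvolved). By the part (i) argument applied to this sub-chain, $r_{a_i} = r_{a_{i+1}} = \cdots = r_{a_{j \pm 1}}$. Applying Proposition \ref{relstring} to the boundary step $(a_{j \pm 1}, a_j)$ makes exactly the correction term attached to the uninvolved endpoint vanish, and the formula then yields $r_{a_{j \pm 1}} \leq r_{a_j}$, hence $r_{a_i} \leq r_{a_j} \leq r_{a_k}$. The main obstacle is twofold: first, justifying that Proposition \ref{relstring} still holds when some of the four ``extra'' arrows in its proof do not exist, which requires interpreting $|\mathcal{S}_\delta|$ and $|\mathcal{E}_\beta|$ as zero in those cases and rechecking the bijections in the proof of the proposition; second, propagating $r_{a_j} \leq r_{a_k}$ across any intermediate involved vertices, which is not directly supplied by the proposition and will require iterating the same comparison along the subchain of involved vertices.
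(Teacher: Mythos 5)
The paper offers no written proof here --- it simply declares the corollary ``an immediate consequence of Proposition \ref{relstring}'' --- so your iterated application of that proposition along the interior vertices of $\rho$ is exactly the intended route, and you supply the content the paper omits. Your preliminary structural observation is the right key step and is correct: minimality of $\rho$ gives $\alpha_{j+1}\alpha_j\notin I_A$, so by axioms (2) and (2') any extra arrow at an uninvolved interior vertex $a_j$ would force a length-two zero-relation in which $a_j$ is involved (in the sense of Definition \ref{involved}), a contradiction; hence the correction terms $|\mathcal{S}_{\delta}|$ and $|\mathcal{E}_{\beta}|$ vanish there. Your reading of Proposition \ref{relstring} in the degenerate case (taking $|\mathcal{S}_{\delta}|=0$ or $|\mathcal{E}_{\beta}|=0$ when the arrow is absent) is consistent with its proof, since the decompositions $(Q^s_x)_0=\{\eps_x\}\cup\mathcal{S}_{\alpha_i}\cup\mathcal{S}_{\beta_2}$, etc., simply lose the empty constituent. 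This settles (i) completely.

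The one point you flag in (ii) is a genuine issue, but with the statement rather than with your argument. Telescoping the proposition from $a_i$ to $a_k$ gives $r_{a_k}-r_{a_i}=\sum_{l=i+1}^{k}|\mathcal{S}_{\delta_l}|-\sum_{l=i}^{k-1}|\mathcal{E}_{\beta_l}|$, and an intermediate vertex involved in a further relation contributes terms of both signs, so the literal claim ``$r_{a_i}\le r_{a_k}$ for \emph{every} involved $a_k$'' does not follow from the proposition when other involved vertices separate $a_i$ from $a_k$; the paper's ``immediate consequence'' does not address this either. What you do prove --- that every uninvolved $a_i$ satisfies $r_{a_i}\le r_{a_j}$ for the \emph{nearest} involved vertex $a_j$ --- is the statement actually needed downstream (it is the form used to conclude Theorem C, and it matches what the preceding unnamed proposition delivers for vertices off the relation). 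I would record (ii) in that weaker form, or add the hypothesis that no interior vertex strictly between $a_i$ and $a_k$ is involved in another relation.
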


By the above results we get Theorem C.

\begin{teo} Let $A$ be a representation-finite string algebra.  Then the nilpotency index of $\Re(\emph{mod}\,A)$ is determined by the length of the longest non-zero path of irreducible morphisms between indecomposable $A$-modules from the projective $P_u$ to the injective $I_u$ going through the simple $S_u$, where $u\in(R_A)_0$.
\end{teo}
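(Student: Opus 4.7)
The proof is an assembly of the three preceding ingredients, and the plan is to combine them in a single chain of inequalities. First I would invoke Theorem \ref{cla} to write $r_A = \max\{r_a+1\}_{a\in (Q_A)_0}$, so that the task is reduced to locating, among all vertices of $Q_A$, one that realizes this maximum and lies in $(R_A)_0$. The statement implicitly assumes $I_A\neq 0$ (otherwise $(R_A)_0=\emptyset$ and the hereditary case is governed by Theorem \ref{ppalher}); I would mention this at the outset and then work under the hypothesis $I_A\neq 0$.

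Next I would apply Theorem \ref{nilpopf} to cut down the indexing set to the subset $\mathcal{V}\subseteq (Q_A)_0$ of vertices that are neither sinks nor sources, giving $r_A=\max\{r_a+1\}_{a\in \mathcal{V}}$. Pick $a_0\in\mathcal{V}$ realizing this maximum. If $a_0\in (R_A)_0$ we are done; otherwise $a_0$ is a vertex in $\mathcal{V}\setminus (R_A)_0$, and the preceding proposition (the one immediately before the statement of Theorem~C) furnishes a vertex $y\in (R_A)_0$ such that $r_{a_0}\le r_y$. Combined with the reverse inequality $r_y+1\le r_A$ coming from Theorem \ref{cla}, this forces $r_y=r_{a_0}$ and hence $r_A=r_y+1$ with $y\in(R_A)_0$, which is exactly the content of Theorem~C.

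Finally, since $r_u$ is by definition (see the paragraph just above Theorem \ref{cla}) the length of the non-zero path of irreducible morphisms between indecomposable $A$-modules from $P_u$ to $I_u$ going through $S_u$, taking the maximum of $r_u+1$ over $u\in (R_A)_0$ is precisely taking the length of the longest such path plus one, which yields the nilpotency index as stated.

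The only genuinely non-routine step is the reduction from $\mathcal{V}$ to $(R_A)_0$, but this is exactly what the preceding proposition accomplishes by a careful string-theoretic argument decomposing $(Q_x^s)_0$ and $(Q_x^e)_0$ into disjoint pieces and transporting them along a connecting string $C$ into the corresponding sets at a vertex $y$ involved in a zero-relation; once that proposition is granted, the present theorem is a short chain of maxima. Hence the main obstacle has already been absorbed in the preparatory lemma, and the proof of Theorem~C itself is essentially formal.
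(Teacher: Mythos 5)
Your proposal is correct and follows exactly the route the paper intends: the paper's proof of Theorem~C is simply the remark ``By the above results we get Theorem C,'' meaning the combination of Theorem \ref{cla}, Theorem \ref{nilpopf}, and the preceding proposition reducing from $\mathcal{V}$ to $(R_A)_0$, which is precisely the chain of maxima you spell out. Your explicit caveat that $I_A\neq 0$ must be assumed (so that $(R_A)_0\neq\emptyset$ and $\mathcal{V}\neq\emptyset$) is a point the paper leaves implicit.
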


\section{Applications}

\subsection{Application to toupie  algebras with commutative relations}
Let  $A \simeq kQ_A/I_A$ be a toupie algebra with only commutative relations and with three branches. Then $A$ is a representation-finite algebra if and only if  at least one $n_i=1$ for $i=1, 2, 3$.

\begin{teo}\label{three-commutative}
Let $A \simeq kQ_A/I_A$ be the representation-finite algebra with three branches

$$\xymatrix @!0 @R=1.0cm  @C=1.3cm {& & \; \; \;  \; \; \; x_1 & \ar[rrd]& & \\
a\ar[r]\ar[rru]\ar[rd]&y_1\ar[r]&\ar@{.}[r]&
\ar[r]&y_{n_2}\ar[r]&b\\
&z_1\ar[r]&\ar@{.}[r]&\ar[r]&z_{n_3}\ar[ru]& \\} $$

\noindent and  commutative relations. Let $B_1$ be the hereditary algebra obtained from $Q_A$ by deleting the source vertex and $B_2$ be the hereditary algebra obtained from $Q_A$ by deleting the sink vertex.  Then $r_A= 2 r_{B_1} -1= 2 r_{B_2} -1$.
\end{teo}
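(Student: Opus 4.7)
The plan is (i) to apply Theorem \ref{vertex-commutative} to reduce $r_A$ to a computation at the single vertex $x_1$, (ii) to split the resulting extremal path at $S_{x_1}$ into two halves of equal length via a natural duality coming from the symmetry of the commutative toupie, and (iii) to identify the length of each half with $r_{B_1}-1=r_{B_2}-1$ by realizing $A$ as a one-point coextension of $B_2$.

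First, since $a\to x_1\to b$ is the unique shortest branch of $Q_A$ and $x_1$ is neither source nor sink, Theorem \ref{vertex-commutative} gives $r_A=r_{x_1}+1$, where $r_{x_1}$ is the length of the longest path of irreducible morphisms from $P_{x_1}^A$ to $I_{x_1}^A$ through $S_{x_1}$ in $\Gamma_A$; this length is well-defined because $A$ is simply connected and $\Gamma_A$ is a component with length. Reversing each arrow and relabeling $a\leftrightarrow b$, $x_1\leftrightarrow x_1$, $y_i\leftrightarrow y_{n_2+1-i}$, $z_i\leftrightarrow z_{n_3+1-i}$ defines an algebra anti-automorphism $A\cong A^{op}$ (commutativity relations are preserved), whose induced duality on $\Gamma_A$ fixes $S_{x_1}$ and exchanges $P_{x_1}^A$ with $I_{x_1}^A$. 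Splitting the extremal path as
\[P_{x_1}^A\overset{\gamma_1}{\rightsquigarrow}S_{x_1}\overset{\gamma_2}{\rightsquigarrow}I_{x_1}^A,\]
the duality carries $\gamma_1$ to the reverse of $\gamma_2$, so $\ell(\gamma_1)=\ell(\gamma_2)$ and $r_{x_1}=2\ell(\gamma_2)$.

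Next, I would compute $\ell(\gamma_2)$ through the hereditary quotient $B_2$. Realize $A=[M_2]B_2$ with $M_2:=I_b^A/S_b$; a direct computation shows $M_2\simeq P_a^{B_2}$, the $B_2$-module with dimension one at every vertex of $Q_{B_2}$. Since $S_{x_1}$ and $I_{x_1}^A$ are supported away from $b$, they are $B_2$-modules, and because $x_1$ is a sink of $Q_{B_2}$ they coincide with $P_{x_1}^{B_2}$ and $I_{x_1}^{B_2}$ respectively. By Proposition \ref{dual lema S-S}, an almost-split sequence $0\to L\to M\to N\to 0$ in $\mbox{mod}\,B_2$ satisfying $\mbox{Hom}_{B_2}(N,M_2)=0$ persists as almost-split in $\mbox{mod}\,A$; one verifies that every almost-split sequence along the longest path from $P_{x_1}^{B_2}$ to $I_{x_1}^{B_2}$ in $\Gamma_{B_2}$ fulfils this vanishing (its right term is a proper predecessor of $I_{x_1}^{B_2}$, so it admits no non-zero morphism to the projective $P_a^{B_2}$), so this path of length $r_{x_1}^{B_2}$ lifts verbatim to $\Gamma_A$, giving $\ell(\gamma_2)\geq r_{x_1}^{B_2}$; the reverse inequality is automatic because any path in $\Gamma_A$ between two $B_2$-modules factors through $\Gamma_{B_2}$. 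Hence $\ell(\gamma_2)=r_{x_1}^{B_2}$.

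Finally, since $\overline{Q_{B_2}}$ is a Dynkin diagram with one leg of length one (and so one of $\mathbb{D}_m$, $\mathbb{E}_6$, $\mathbb{E}_7$, or $\mathbb{E}_8$), a direct inspection of the Auslander-Reiten quiver of a Dynkin hereditary algebra shows that the longest path from $P_{x_1}^{B_2}$ to $I_{x_1}^{B_2}$ realizes the global maximum $r_{B_2}-1$ given by Theorem \ref{ppalher}; combined with the fact that $\overline{Q_{B_1}}=\overline{Q_{B_2}}$ (so $r_{B_1}=r_{B_2}$ by Theorem \ref{ppalher}) this yields
\[r_A=r_{x_1}+1=2\ell(\gamma_2)+1=2(r_{B_2}-1)+1=2r_{B_2}-1=2r_{B_1}-1.\]
The main obstacle is the final step: a case-by-case verification, in each admissible Dynkin type, that the maximum $r_v^{B_2}$ is realized at the sink $v=x_1$ adjacent to the source. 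A secondary subtlety is checking the Hom-vanishing condition along the whole extremal path when applying Proposition \ref{dual lema S-S}.
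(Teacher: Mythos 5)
Your proposal is correct in outline and shares its skeleton with the paper's proof: both reduce to the single vertex $x_1$ via Theorem \ref{vertex-commutative}, and both transfer the two halves $P_{x_1}\rightsquigarrow S_{x_1}$ and $S_{x_1}\rightsquigarrow I_{x_1}$ to the hereditary algebras $B_1$ and $B_2$ by realizing $A$ as a one-point (co)extension ($A=B_1[\mbox{rad}\,P_a]=[I_b/\mbox{soc}\,I_b]B_2$, your $M_2\simeq P_a^{B_2}$ agreeing with the paper's $[P_a]B_2$) and invoking Propositions \ref{lema S-S} and \ref{dual lema S-S}. You differ in two respects. First, you only carry out the $B_2$ half and recover the other from the anti-automorphism $A\simeq A^{op}$ of the commutative toupie; this is a legitimate shortcut (the induced duality on $\Gamma_A$ fixes $S_{x_1}$, swaps $P_{x_1}$ with $I_{x_1}$ and preserves lengths, and the with-length property then gives $\ell(\gamma_1)=\ell(\gamma_2)$), whereas the paper treats both halves symmetrically and only at the end uses that $B_1$ and $B_2$ have the same underlying Dynkin graph. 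Second, you isolate explicitly the fact both arguments actually need: that the path from $P_{x_1}$ to $I_{x_1}$ in $\Gamma_{B_i}$ has length exactly $r_{B_i}-1$, i.e.\ that the short-branch vertex realizes the maximum of $r_v$ over the Dynkin algebra $B_i$. The paper asserts the corresponding identification $r_{B_1}=\ell_{A}(P_{x_1}\rightsquigarrow S_{x_1})$ without justification (and with an off-by-one, since the final formula $r_A=r_{B_1}+r_{B_2}-1$ only balances when each half has length $r_{B_i}-1$, which your bookkeeping gets right); your proposed case-by-case Dynkin check is the honest way to close this, so your ``main obstacle'' is a gap in the published argument as much as in yours. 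The one further point to tighten, again common to both arguments, is that some modules on the lifted path are projective over $B_2$ but not over $A$ (e.g.\ $P_a^{B_2}$), so the irreducible maps into them are not literally covered by Proposition \ref{dual lema S-S} and require a short separate verification.
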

\begin{proof} By Theorem \ref{vertex-commutative}, we know that it is enough to study the length of a path from $P_{x_1}$ to $I_{x_1}$. We may consider the one  going through the simple  $S_{x_1}$. Consider $M= \mbox{rad} \, P_a$. Then $M= I_b$ in $\mbox{mod} \,B_1$ and $S_{x_1}$ is a direct summand of $I_b/ \mbox{soc}I_b$.
Hence there is an irreducible morphism from $I_b$ to $S_{x_1}$ in $\mbox{mod} \,B_1$. Moreover, $I_{x_1} =S_{x_1}$.

On the other hand, note that $A=B_1[I_b]$. By Proposition \ref{lema S-S}, since $B_1$ is directed and there is a path of irreducible morphisms from $P_{x_1}$ to $I_{b}$ in $\mbox{mod} \, B_1$ then we infer that such a path is also a path of irreducible morphisms in $\mbox{mod} \, A$. Furthermore, since
there is an irreducible morphism from $I_b$ to $S_{x_1}$ in $\mbox{mod}\,{B_1}$ then  $\mbox{Hom}_{B_1} (I_{b}, \tau S_{x_1})= 0$. Again, by Proposition \ref{lema S-S}, we know that the morphism from $I_{b}$ to $S_{x_1}$ is  irreducible in
$\mbox{mod} \,A$. Hence, $\ell_{B_1}(P_{x_1} \rightsquigarrow S_{x_1}) = \ell_{A}(P_{x_1} \rightsquigarrow S_{x_1})$ and since $I_{x_1} =S_{x_1}$ in $\mbox{mod}\,{B_1}$ then $r_{B_1}=\ell_{A}(P_{x_1} \rightsquigarrow S_{x_1})$.

Dually, with a similar argument as above and since $A= [P_a]B_2$, %where $A_2$ is the hereditary algebra obtained from $Q_A$ by deleting the vertex $b$,
we get that  $\ell_{B_2}(S_{x_1} \rightsquigarrow I_{x_1}) = \ell_{A}(S_{x_1} \rightsquigarrow I_{x_1})$. Therefore $r_{B_2}=\ell_{A}(S_{x_1} \rightsquigarrow I_{x_1})$ since $P_{x_1} =S_{x_1}$ in $\mbox{mod}\,{B_2}$. Then $r_A= r_{B_1}+ r_{B_2}-1$. Since $B_1$ and $B_2$ are of the same Dynkin type then $r_{B_1}=  r_{B_2}$ and  we get the result.
\end{proof}

Using similar arguments that in the proof of Theorem \ref{three-commutative}, we get the following consequence.

\begin{coro}\label{two-1-commutative}
Let $A$ be a toupie algebra of finite representation type with two branches and with a commutative relation. With the above notations, assume that $n_1=1$ or $n_2=1$.  Then $r_A= 2 r_{B_1}-1$.\end{coro}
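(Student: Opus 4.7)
The plan is to mirror the proof of Theorem \ref{three-commutative} almost verbatim, with the only difference being that the toupie has two branches instead of three. Without loss of generality assume $n_1=1$, so that the short branch is $a\to x_1\to b$ and $x_1$ is its unique internal vertex. Since $A$ is a toupie with only commutative relations it is simply connected, hence $\Gamma_A$ is a component with length. The argument of Theorem \ref{vertex-commutative}, which only relies on the component-with-length property together with the parallel paths emanating from $P_b$ and converging at $I_a$ along the different branches, carries over to the two-branch case verbatim; thus the length of $P_u\rightsquigarrow I_u$ through $S_u$ is still maximized on the short branch, and by Theorem \ref{cla} we obtain $r_A=\ell_A(P_{x_1}\rightsquigarrow I_{x_1})+1$ for the path that factors through $S_{x_1}$.

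Next, I would decompose $A=B_1[I_b^{B_1}]=[P_a^{B_2}]B_2$. The verification that $\mathrm{rad}\,P_a\cong I_b^{B_1}$ as a $B_1$-module is identical to the three-branch case, because the commutative relation forces $\mathrm{rad}\,P_a$ to have dimension one at every vertex other than $a$, which is exactly the dimension vector of the indecomposable injective $I_b^{B_1}$. Since $x_1$ loses its unique predecessor $a$ when passing to $B_1$, one has $I_{x_1}^{B_1}=S_{x_1}$; dually, in $B_2$, the vertex $x_1$ loses its unique successor $b$, so $P_{x_1}^{B_2}=S_{x_1}$. Applying Proposition \ref{lema S-S} (respectively its dual Proposition \ref{dual lema S-S}) then lifts the path of irreducible morphisms $P_{x_1}\rightsquigarrow S_{x_1}$ from $\emph{mod}\,B_1$ to $\emph{mod}\,A$, and $S_{x_1}\rightsquigarrow I_{x_1}$ from $\emph{mod}\,B_2$ to $\emph{mod}\,A$. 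The required Hom-vanishings $\mathrm{Hom}_{B_1}(I_b^{B_1},\tau S_{x_1})=0$ and its dual hold because $I_b^{B_1}\to S_{x_1}$ is itself an irreducible morphism in $\Gamma_{B_1}$, ensuring the adjacent almost split sequence survives the extension.

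This yields $r_{B_1}=\ell_A(P_{x_1}\rightsquigarrow S_{x_1})$ and $r_{B_2}=\ell_A(S_{x_1}\rightsquigarrow I_{x_1})$; splicing the two paths at $S_{x_1}$ (counted once on each side) gives $r_A=r_{B_1}+r_{B_2}-1$. Finally, deleting either the source or the sink of the two-branch toupie produces a hereditary algebra whose underlying graph is the same Dynkin diagram of type $A_{n_2+2}$, so Theorem \ref{ppalher}(a) gives $r_{B_1}=r_{B_2}$ and hence $r_A=2r_{B_1}-1$. The only point that requires some thought is extending Theorem \ref{vertex-commutative} to the two-branch setting, but since its proof invokes nothing specific to the existence of three branches (only parallel paths in a component with length), this is essentially a free modification; the rest of the argument is purely a transcription of the three-branch proof with one branch removed.
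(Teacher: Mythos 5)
Your proof is correct and follows essentially the same route the paper intends: the paper's justification of Corollary \ref{two-1-commutative} is literally ``use similar arguments to Theorem \ref{three-commutative},'' and you have transcribed that argument with one branch removed, including the key observations that $\mathrm{rad}\,P_a\cong I_b$ over $B_1$, that $I_{x_1}=S_{x_1}$ in $\mathrm{mod}\,B_1$ and $P_{x_1}=S_{x_1}$ in $\mathrm{mod}\,B_2$, and that $B_1$ and $B_2$ are both of type $A_{n_2+2}$ so $r_{B_1}=r_{B_2}$. The only blemish, inherited verbatim from the paper's own write-up of Theorem \ref{three-commutative}, is the intermediate bookkeeping $r_{B_1}=\ell_A(P_{x_1}\rightsquigarrow S_{x_1})$, which should read $r_{B_1}=\ell_A(P_{x_1}\rightsquigarrow S_{x_1})+1$ (and dually for $B_2$) so that splicing at $S_{x_1}$ indeed yields $r_A=r_{B_1}+r_{B_2}-1$; the final formula is correct either way, as your numerical cross-check against Theorem \ref{two-commutative} with $n_1=1$ confirms.
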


We observe that if $n_1$ or $n_2$ are not equal to one in Corollary \ref{two-1-commutative}, then the formula does not hold. In case
$A$ is a representation-finite algebra with two branches and a  commutative relation we have the following more general result.

\begin{teo}\label{two-commutative}
Let $A \simeq kQ_A/I_A$ be the representation-finite algebra with two branches

$$\xymatrix @!0 @R=0.6cm  @C=1.3cm {&x_1\ar[r]&\ar@{.}[r]&\ar[r]&x_{n_1} \ar[rd]& \\
a\ar[ru] \ar[rd]\ar@{.}[rrrrr]&&&&&b \\
&y_1\ar[r]&\ar@{.}[r]&
\ar[r]&y_{n_2}\ar[ru]&
} $$

\noindent and a commutative relation. Assume that $n_2 \geq n_1$.  Then $r_A= n_1 + 2 n_2 +2 $. \end{teo}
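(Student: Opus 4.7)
\emph{Plan.} The approach parallels the proofs of Theorem \ref{three-commutative} and Corollary \ref{two-1-commutative}, but requires a more careful path-length computation to handle arbitrary $n_1 \geq 1$.

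First, I will reduce to computing $r_{x_1}$. Since the commutative relation kills the unique cycle in $\overline{Q_A}$, the algebra $A$ is simply connected, and hence $\Gamma_A$ is a component with length by \cite{BG}. An adaptation of Lemma \ref{zero-vertex}(a) to the commutative-relation setting --- relying only on the parallel-path principle in components with length --- gives $r_{x_i} = r_{x_j}$ for $1 \le i,j \le n_1$ and $r_{y_i} = r_{y_j}$ for $1 \le i,j \le n_2$. A shortest-branch argument, as in Theorem \ref{vertex-commutative}, using $n_1 \le n_2$, shows $r_{x_1} \ge r_{y_1}$. Hence by Theorem \ref{nilpopf}, $r_A = r_{x_1} + 1$, and it suffices to prove $r_{x_1} = n_1 + 2n_2 + 1$.

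Second, I set up the one-point extension/coextension structure. Let $B_1 = A/\langle e_a \rangle$ and $B_2 = A/\langle e_b \rangle$; both are hereditary of Dynkin type $A_{n_1+n_2+1}$, so $r_{B_1} = r_{B_2} = n_1+n_2+1$ by Theorem \ref{ppalher}(a). Viewing $A$ as the one-point extension $B_1[M]$ with $M = \mbox{rad}\, P_a^A = I_b^{B_1}$, and dually as the one-point coextension $[N]B_2$ with $N = I_b^A / \mbox{soc}\, I_b^A = P_a^{B_2}$, where both $M$ and $N$ are indecomposable of dimension vector $(1,\ldots,1)$ on the respective Dynkin diagram (forced by the commutative relation), Propositions \ref{lema S-S} and \ref{dual lema S-S} permit the transfer of almost split sequences from $\mbox{mod}\, B_i$ to $\mbox{mod}\, A$.

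Third, I decompose the longest non-zero path from $P_{x_1}^A$ to $I_{x_1}^A$ through $S_{x_1}$ into an upper half $P_{x_1}^A \rightsquigarrow S_{x_1}$ and a lower half $S_{x_1} \rightsquigarrow I_{x_1}^A$. Since $P_{x_1}^A = P_{x_1}^{B_1}$, the upper half lies in $\mbox{mod}\, B_1$; by an analysis of the mesh structure of $\Gamma_{B_1}$, the maximal length $r_{x_1}^{B_1}$ equals $n_1 + n_2$, and this path lifts to $\Gamma_A$ by Proposition \ref{lema S-S} (the hypothesis $\mbox{Hom}_{B_1}(M,-) = 0$ on each relevant left term holds because $M = I_b^{B_1}$ has socle $S_b$, which does not embed in those modules). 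Dually, the lower half lies in $\mbox{mod}\, B_2$; the sub-path from $S_{x_1}$ to $I_{x_1}^{B_2} = I_{x_1}^A$ threads through the injectives along the longer $y$-branch and has length $n_2 + 1$, lifting to $\Gamma_A$ via Proposition \ref{dual lema S-S}. Summing, $r_{x_1}^A = (n_1+n_2) + (n_2+1) = n_1 + 2n_2 + 1$, whence $r_A = n_1 + 2n_2 + 2$ by Theorem \ref{cla}. The principal obstacle is the explicit identification in $\Gamma_{B_2}$ of the lower sub-path of length $n_2 + 1$ through the $y$-branch injectives, together with the verification of the $\mbox{Hom}$-vanishing conditions of Proposition \ref{dual lema S-S} at each step.
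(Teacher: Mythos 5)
Your plan arrives at the correct answer, but by a genuinely different route from the paper's. The paper works entirely inside $\Gamma_A$: it reads off $\ell_A(S_{x_1}\rightsquigarrow I_{x_1})=n_2+1$ from the mesh containing $\tau I_{y_{n_2}}=S_{x_1}$ together with the copath of injectives $I_{y_{n_2}}\rightarrow\dots\rightarrow I_{y_1}\rightarrow I_a$, and it obtains $\ell_A(P_{x_1}\rightsquigarrow S_{x_1})=n_1+n_2$ by first verifying that the translates $\tau^{-n_1}P_j$ and $\tau^{-(k-1)}P_k$ are defined and then $\tau^{-}$-shifting the copaths of projectives, using $\tau S_{x_i}=S_{x_{i+1}}$ from \cite{LM}. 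You instead compute the two halves in the hereditary $A_{n_1+n_2+1}$-algebras $B_1$ and $B_2$ obtained by deleting the source and the sink, and transfer them to $\emph{mod}\,A$ via Propositions \ref{lema S-S} and \ref{dual lema S-S}; this mirrors the paper's proof of Theorem \ref{three-commutative} and avoids the $\tau$-shift bookkeeping, at the price of two explicit type-$A$ length computations and the transfer hypotheses. Both computations do close: $r_{x_1}^{B_1}=n_1+n_2$ because $r_u=n-1$ at every vertex of a hereditary $A_n$-algebra (e.g.\ via $r_u=|(Q_u^s)_0|+|(Q_u^e)_0|-2$ from \cite[Proposition 3.2]{CG}) and $I_{x_1}^{B_1}=S_{x_1}$, while $\ell_{B_2}(S_{x_1}\rightsquigarrow I_{x_1})=r_{x_1}^{B_2}-\ell_{B_2}(P_{x_1}\rightsquigarrow S_{x_1})=(n_1+n_2)-(n_1-1)=n_2+1$ since $\Gamma_{B_2}$ is with length. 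One point needs repair: your justification of the $\mbox{Hom}$-vanishing ("the socle $S_b$ does not embed in those modules") is not valid, since a nonzero map $M\rightarrow L$ need not carry $\mbox{soc}\,M$ into $L$; but the condition holds for a cleaner reason. Over the hereditary algebra $B_1$ every quotient of the injective $M=I_b^{B_1}$ is injective, so a nonzero image of $M$ in an indecomposable $L$ splits off and forces $L$ injective; as the left-hand term $\tau N$ of an almost split sequence is never injective, \emph{every} almost split sequence of $\emph{mod}\,B_1$ survives in $\emph{mod}\,A=B_1[M]$. Dually, $\mbox{Hom}_{B_2}(Z,P_a^{B_2})\neq 0$ forces $Z$ projective, so every almost split sequence of $\emph{mod}\,B_2$ survives in $\emph{mod}\,A=[P_a^{B_2}]B_2$. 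With these two observations the "principal obstacle" you flag disappears, and your argument is complete.
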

\begin{proof}
By Theorem \ref{vertex-commutative}, it is enough to study the length of any path from $P_{x_1}$ to $I_{x_1}$. We may consider the one going through the simple  $S_{x_1}$.
Observe that $\tau I_{y_{n_2}} = S_{x_1}$ and that we have the following situation in $\Gamma_A$:

$$\label{caminos-commutatividad2}
 \xymatrix @!0 @R=0.9cm  @C=1.5cm  { &  &  \bullet \ar[r] & \bullet \ar[r]  & \ar@{.}[r] & \ar[r]\bullet & I_{x_1}\ar[rd] & &\\
  &\bullet \ar[rd]\ar[ru]&&& & & & I_{a}. &\\
  S_{x_1} \ar[ru]\ar@{.}[rr]   &  & I_{y_{n_2}}\ar[r] & I_{y_{{n_2}-1}}\ar[r]  & \ar@{.}[r]&\ar[r]I_{y_{2}} &  I_{y_{1}} \ar[ru] & &\\}
$$
\vspace{.05in}

\noindent Since $\ell_{A}(I_{y_{n_2}} \rightsquigarrow I_{a})=n_2$ then $\ell_{A}(S_{x_1} \rightsquigarrow I_{x_1})=n_2+1$.

Now, we show that  $\ell_{A}(P_{x_{1}} \rightsquigarrow S_{x_{1}})=n_1+n_2$. Note that $\tau I_{y_{n_2}}=S_{x_1}$ and that $\tau S_{x_{n_{1}}}=P_{y_{1}}$. Moreover, by \cite[Proposition 1.5]{LM} we have that $\tau S_{x_{i}}=S_{x_{i+1}}$ for $i=1, \dots, n_{1}-1$.

First we prove that the modules $\tau^{-n_1} P_j$ for $j \in \{1, \dots, n_2, b \}$ and the modules $\tau^{-(k-1)} P_k$ for $k \in \{2,  \dots, n_1\}$   are defined.
Suppose that there exists $j \in \{1, \dots, n_2, b \}$ such that $\tau^{-k} P_j$ is injective with $k < n_1$. Consider $j$ to be the less integer %in $\{1, \dots, n_2, b \}$
such that $\tau^{-k} P_j$ is defined for all $j$. We denote such injective by $I$. Since there is a path of irreducible morphisms between indecomposable projective $A$-modules from $P_b$ to $P_{y_{1}}$ of length $n_2$, then there is also a path from $\tau^{-k} P_{b}$ to $\tau^{-k} P_{y_{1}}$  of the same length.

On the other hand, since $k < n_1$ then $\tau^{-k} P_{y_{1}} \simeq  S_{x_{n_{1}}-k}$ and therefore, $I$ is a predecessor of $I_{y_{n_2}}$. Hence, $I \not \simeq I_{y_{l}}$ with $l \in \{1, \dots, n_2, b \}$. Thus $I = I_{x_{h}}$ for $h \in \{1, \dots, n_1 \}$. Since there is a unique path of irreducible morphisms between indecomposable injective $A$-modules from $I_{x_{n_l}}$ to $I_{x_{l}}$   and there is a path between indecomposable $A$-modules between
$I_{x_{h}}$ to $\tau^{-k} P_{y_{1}}$ then we infer that $\tau^{-k} P_{y_{1}}$ is injective. Moreover, $S_{x_{n}-k} \simeq \tau^{-k} P_{y_{1}}$ getting a contradiction  because $S_{x_{n}-k}$ is not injective.

With a similar argument as above we get that the $A$-modules  $\tau^{-(k-1)} P_k$ for $2 \leq k \leq n_1$ are defined. By the existence of a  path of irreducible morphisms from $P_b$ to $P_{x_{1}}$ of length $n_1$ we infer the existence of a path of irreducible morphisms from  $P_{x_{n}}$ to  $\tau ^{-n_{1}} P_{b}$ also of length $n_1$.

On the other hand, by the existence of the  path of irreducible morphisms from $P_b$ to $P_{y_{1}}$ of length $n_2$ we infer the existence of a path of irreducible morphisms from $\tau ^{-n_{1}} P_{b}$ to $\tau ^{-n_{1}} P_{y_{1}} \simeq S_{x_{1}}$ of the same length. Hence, $\ell_{A}(P_{x_{1}} \rightsquigarrow S_{x_{1}})=n_1+n_2$.
\end{proof}

\subsection{Application to monomial tree algebras} \label{A1A2}
The aim of this section is to read the nilpotency index of the radical of the module category of some representation-finite tree monomial algebra from the bounded quiver of $A= kQ_A/ I_A$, in case $I_A$ has not overlapped relations.

We shall describe an algorithm that gives a way to control the category $\mbox{mod}\, A$ when  we delete certain arrows of $Q_A$. This method allows us to analyze $\mbox{mod}\, A$  by simpler quivers whose associated bound quiver algebra may have suitable properties such as  being hereditary algebras, where  the length of certain paths are well-known.
\vspace{.05in}

We start by considering $A$ to be a representation-finite algebra of tree type with only one zero-relation, $\alpha_m \dots \alpha_1$.
We denote by $A_1$ the maximal connected hereditary algebra whose quiver is obtained from $Q_A$ by deleting the arrow $\alpha_1$, and  containing the arrow $\alpha_m$. We denote by $A_2$  the maximal connected hereditary algebra whose quiver is obtained from $Q_A$ by deleting the arrow $\alpha_m$,  and containing  the arrow $\alpha_1$.
\vspace{.05in}

We observe that, with  the above construction the algebra $A$ is the linearly oriented pullback of $A_1 \twoheadrightarrow A_1 \cap A_2$ and $A_2 \twoheadrightarrow A_1 \cap A_2$.

On the other hand, note that $A_1$ and $A_2$ can be obtained from $A_1\cap A_2$ by a finite sequence of one-point extensions or/and  one-point coextensions. Precisely, the first step to get $A_1$ is a one-point extension of $A_1\cap A_2$ by the projective
corresponding to the vertex $e(\alpha_1)$ and the first step to  get $A_2$ is a one-point coextension of $A_1\cap A_2$ by  the injective corresponding to the vertex $s(\alpha_m)$.
Moreover, we can also  obtain $A$ from one-point extensions or/and  one-point coextensions of $A_1$ and $A_2$.

\begin{obs}\label{Intersection-vertices} \emph{ Consider $A$  a representation-finite monomial algebra.
By Theorem \ref{vertex-nilpo}, to determine $r_A$ when $\Gamma_A$ is with length it is enough to consider for each different zero-relation of $I_A$ only one vertex involved in it. In case,  we consider a representation-finite tree algebra with only one zero-relation then it is not hard to see, with similar techniques as in the proof of Lemma \ref{zero-vertex},  that any vertex  of $Q_{A_1 \cap A_2}$, where $A_1 \cap A_2$ is the algebra  constructed as above,  is sufficient to determine $r_A$.}
\end{obs}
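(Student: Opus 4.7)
The plan is to show that $r_u = r_A - 1$ for every vertex $u \in (Q_{A_1 \cap A_2})_0$. By Theorem \ref{vertex-nilpo}, we already know that $r_A = r_a + 1$ whenever $a$ is a vertex involved in the unique zero-relation $\alpha_m \cdots \alpha_1$, so the statement reduces to the assertion that $r_u = r_a$ for every $u \in (Q_{A_1 \cap A_2})_0$ and some (equivalently, any) involved vertex $a$.

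First I would analyze the combinatorial structure of $Q_{A_1 \cap A_2}$. Since $Q_A$ is a tree, deleting the arrow $\alpha_1$ disconnects its underlying graph into exactly two subtrees; the component kept in $A_1$ is the one containing $\alpha_m$, which therefore contains every vertex among $e(\alpha_1), s(\alpha_2), \ldots, e(\alpha_m)$ but \emph{not} $s(\alpha_1)$. Dually, $Q_{A_2}$ contains every vertex on the path of $\alpha_1, \ldots, \alpha_{m-1}$ but not $e(\alpha_m)$. Consequently $Q_{A_1 \cap A_2}$ is a connected subtree of $Q_A$ that contains all the involved vertices and excludes both $s(\alpha_1)$ and $e(\alpha_m)$.

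Next I would prove that the integer $r_u$ is constant across the vertices of $Q_{A_1 \cap A_2}$. Since this subtree is connected, it is enough to check that $r_u = r_v$ whenever $u, v$ are endpoints of an edge of $Q_{A_1 \cap A_2}$. A case analysis built on Lemma \ref{zero-vertex} takes care of this: if both $u$ and $v$ are involved, apply part (a); if neither is involved, then the edge cannot be an internal arrow of the zero-relation (which would force both endpoints to be involved), so part (b) applies; and if exactly one of $u, v$ is involved, part (c) gives an inequality in the desired direction.

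The main obstacle is to upgrade the inequality in the mixed case into an equality. Inspecting the proof of Lemma \ref{zero-vertex}(c), the balance equation $1+n = l+s$ coming from the component-with-length property forces $s \geq 1$, and strict inequality $s > 1$ occurs precisely when the non-involved endpoint coincides with $e(\alpha_m)$ (or dually with $s(\alpha_1)$). By the structural observation above, these are exactly the two vertices excluded from $Q_{A_1 \cap A_2}$, so in our situation we always have $s = 1$ and hence $r_u = r_v$. Combined with the fact that at least one involved vertex $a$ lies in $Q_{A_1 \cap A_2}$ with $r_a = r_A - 1$, this propagates the value $r_A - 1$ to every $u \in (Q_{A_1 \cap A_2})_0$, as required.
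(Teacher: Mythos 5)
Your proposal is correct and follows essentially the same route the paper intends: the remark is justified exactly by rerunning the length-balance argument of Lemma \ref{zero-vertex}, observing that the strict inequality in part (c) arises only when the non-involved endpoint is $s(\alpha_1)$ or $e(\alpha_m)$, and noting that these two vertices are precisely the ones excluded from the connected subtree $Q_{A_1\cap A_2}$. Your explicit identification of $Q_{A_1\cap A_2}$ as the subtree containing all involved vertices and the propagation of the constant value $r_A-1$ along its edges is a faithful fleshing-out of the paper's sketch.
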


In all these section, we refer to the  algebras $A_1$ and $A_2$  as we define above.
\vspace{.05in}

The linearly oriented pullback algebras, are those where the injective and projective $A$-modules can be determined by those ones over $A_1$ and $A_2$.
We recall the following lemma from \cite{CW}.

\begin{lema}\cite[Lemma 2]{CW}\label{proyiny}
Let $A$ be the linearly oriented pullback of $A_1\fepi A_1\cap A_2$ and  $A_2\fepi A_1\cap A_2$. Then
\begin{enumerate}
\item For any $x\in (Q_{A_2})_0$, $P_x$ is the projective module corresponding to the vertex $x$ in $\emph{mod}\, A$ and $\emph{mod}\, A_2.$
\item For any $x\in (Q_{A_1})_0 \backslash (Q_{A_1\cap A_2})_0$, $P_x$ is the projective module corresponding to the vertex $x$ in $\emph{mod}\, A$ and $\emph{mod}\, A_1 .$
\item For any $x \in (Q_{A_1\cap A_2})_0$,  $P_x$ is the projective module corresponding to the vertex $x$ in $\emph{mod}\, A_1$ and $\emph{mod}\, (A_1 \cap A_2).$
\item For any $x\in (Q_{A_1})_0$,  $I_x$ is the injective module corresponding to the vertex $x$ in $\emph{mod}\, A$ and $\emph{mod}\, A_1 .$
\item For any $x\in (Q_{A_2})_0 \backslash (Q_{A_1\cap A_2})_0$, $I_x$ is the injective module corresponding to the vertex $x$ in $\emph{mod}\, A$ and $\emph{mod}\, A_2.$
\item For any $x\in (Q_{A_1\cap A_2})_0$, $I_x$ is the injective module corresponding to the vertex $x$ in $\emph{mod}\, A_2$ and $\emph{mod}\, (A_1 \cap A_2).$
\end{enumerate}
\end{lema}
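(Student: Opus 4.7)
The plan is to translate each assertion into a path-theoretic description of projectives and injectives, and then to use the linearly oriented pullback hypothesis to confine the relevant paths to the appropriate subquiver. Recall that for a bound quiver algebra $kQ/I$ the $y$-component of the indecomposable projective $P_x$ is the $k$-space spanned by the classes modulo $I$ of paths from $x$ to $y$ in $Q$; dually, the $y$-component of $I_x$ is spanned by paths from $y$ to $x$. So each of the six assertions amounts to a natural bijection between path spaces in $A$ and in the corresponding piece ($A_1$, $A_2$, or $B:=A_1\cap A_2$).

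For item (1), fix $x\in (Q_{A_2})_0$ and argue that every path in $Q_A$ starting at $x$ stays inside $Q_{A_2}$. If $x\in (Q_{A_2})_0\setminus (Q_B)_0$ then, from the pushout description of $Q_A$, there is no arrow from $x$ directly into $(Q_{A_1})_0\setminus (Q_B)_0$; moreover, once a path through $x$ reaches $(Q_B)_0$ the linearly oriented hypothesis (no path from $(Q_B)_0$ to $(Q_{A_1})_0\setminus (Q_B)_0$) forbids it from ever leaving $A_2$. If instead $x\in (Q_B)_0$, the same hypothesis applies directly. In either case the paths starting at $x$ in $Q_A$ coincide with the paths starting at $x$ in $Q_{A_2}$, and the only relations in $I_A$ affecting them are those already in $I_{A_2}$. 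Hence $P_x^{A}\simeq P_x^{A_2}$. Items (2) and (3) are obtained by the same reasoning applied to vertices in $(Q_{A_1})_0\setminus (Q_B)_0$ and in $(Q_B)_0$ respectively, using that vertices of $B$ lie inside $A_1$ and that $A_1$ coincides with $B$ as far as paths out of a vertex of $B$ are concerned. Items (4), (5) and (6) are the duals, where now paths \emph{ending} at $x$ are confined to $Q_{A_1}$, $Q_{A_2}$ or $Q_B$ via the second half of the linearly oriented hypothesis (no path from $(Q_{A_2})_0\setminus (Q_B)_0$ to $(Q_B)_0$).

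The delicate step, and the one that needs to be checked carefully, is that the generators of $I_A$ coming from the linking paths between $(Q_{A_1})_0\setminus (Q_B)_0$ and $(Q_{A_2})_0\setminus (Q_B)_0$ do not spuriously annihilate paths that are nonzero in $A_1$ or $A_2$. The confinement argument of the previous paragraph is exactly what rules this out: for an $x$ of the type prescribed in each statement, no path through $x$ ever mixes vertices of both $(Q_{A_1})_0\setminus (Q_B)_0$ and $(Q_{A_2})_0\setminus (Q_B)_0$, so the extra linking relations are irrelevant to the local computation of $P_x$ and $I_x$. Once that observation is in place, the identification of modules in the six cases follows immediately from comparing the path-space bases of $P_x$ and $I_x$ in $\emph{mod}\,A$ and in $\emph{mod}\,A_1$, $\emph{mod}\,A_2$, or $\emph{mod}\,(A_1\cap A_2)$.
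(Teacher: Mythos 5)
First, note that the paper does not prove this statement at all: it is imported verbatim as \cite[Lemma 2]{CW}, so any proof you give is necessarily "a different route from the paper." Your overall strategy --- describe $P_x$ and $I_x$ by path spaces and use the linearly oriented hypothesis to confine the relevant paths to $Q_{A_1}$, $Q_{A_2}$ or $Q_{A_1\cap A_2}$ --- is the natural one and most of it goes through: items (1), (3), (4) and (6) really are pure confinement statements, since a path starting at a vertex of $(Q_{A_2})_0$ can never reach $(Q_{A_1})_0\setminus(Q_{A_1\cap A_2})_0$, and dually for paths ending in $(Q_{A_1})_0$.

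There is, however, one concrete error in the final paragraph: the claim that "for an $x$ of the type prescribed in each statement, no path through $x$ ever mixes vertices of both $(Q_{A_1})_0\setminus(Q_{A_1\cap A_2})_0$ and $(Q_{A_2})_0\setminus(Q_{A_1\cap A_2})_0$" is false for items (2) and (5). For $x\in(Q_{A_1})_0\setminus(Q_{A_1\cap A_2})_0$ the linearly oriented hypothesis forbids paths from $(Q_{A_1\cap A_2})_0$ back into $(Q_{A_1})_0\setminus(Q_{A_1\cap A_2})_0$ and from $(Q_{A_2})_0\setminus(Q_{A_1\cap A_2})_0$ into $(Q_{A_1\cap A_2})_0$, but it does \emph{not} forbid a path $x\rightsquigarrow b\rightsquigarrow y$ with $b\in(Q_{A_1\cap A_2})_0$ and $y\in(Q_{A_2})_0\setminus(Q_{A_1\cap A_2})_0$; such paths generically exist (in Example \ref{ejemplo1} the path $1\to2\to3\to5\to6\to7$ is exactly of this form). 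Consequently your justification that $P_x$ computed over $A$ has no component at vertices of $(Q_{A_2})_0\setminus(Q_{A_1\cap A_2})_0$ is not that such paths are absent, but that every such path links $(Q_{A_1})_0\setminus(Q_{A_1\cap A_2})_0$ with $(Q_{A_2})_0\setminus(Q_{A_1\cap A_2})_0$ and therefore lies in $I_A$ by the very construction of the pullback ideal. In other words, for (2) and (5) the linking relations are not "irrelevant to the local computation" --- they are exactly what makes $P_x$ over $A$ agree with $P_x$ over $A_1$ (and dually $I_x$ over $A$ with $I_x$ over $A_2$). What remains correct, and should replace the false claim, is that a path between two vertices both lying in $Q_{A_1}$ (resp.\ both in $Q_{A_2}$) never meets the opposite side, so no nonzero class gets spuriously annihilated. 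With that repair the argument is complete.
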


Let $A$ be the linearly oriented pullback of $A_1\fepi A_1\cap A_2$ and  $A_2\fepi A_1\cap A_2$.
Following \cite{CW}, we denote by $C'$ the direct sum of the projective $A$-modules
$P_i$ with $i\in (Q_{A_1})_0\backslash (Q_{A_1\cap A_2})_0$, and by $DA'$ the direct sum of the injective $A$-modules
$I_i$ with $i\in (Q_{A_2})_0\backslash (Q_{A_1\cap A_2})_0$.

We also consider the following subcategories of $\mbox{ind} \,A$;
$\mathcal{A}=\mbox{Pred}\,DA'$; $\mathcal{C}=\mbox{Succ}\, C'$ and $\mathcal{B}=\mbiA \backslash (\mathcal{A}\cup \mathcal{C})$, and the full subquivers  of $\Gamma_A$ denoted by  $\Sigma'_{A_1}$, $\Sigma'_{A_2}$ and  $\Sigma'_{A_1\cap A_2}$  whose vertices are the objects
of $\mbiA\backslash \mathcal{A}$, $\mbiA\backslash \mathcal{C}$  and $\mathcal{B}$, respectively.
\vspace{.05in}

Next, we recall the following result useful for our purposes.

\begin{lema}\label{sigmas} \cite[Lemma 8]{CW}
Let $A$ be the linearly oriented pullback of $A_1\fepi A_1\cap A_2$ and  $A_2\fepi A_1\cap A_2$. Let  $\Sigma'_{A_1}$, $\Sigma'_{ A_2}$ and $\Sigma'_{A_1\cap A_2}$ be the subquivers of $\Gamma_A$ defined as above. Then the following conditions hold.
\begin{enumerate}
\item $\Sigma'_{A_1\cap A_2}$ is a full and convex subquiver of $\Gamma_{A_1\cap A_2}$, $\Gamma_{A_1}$ and $\Gamma_{A_2}$.
\item $\Sigma'_{A_1}$ is a full and convex subquiver of $\Gamma_{A_1}$.
\item $\Sigma'_{A_2}$ is a full and convex subquiver of $\Gamma_{A_2}$.
\end{enumerate}
\end{lema}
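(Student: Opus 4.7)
\smallskip

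\textbf{Proof proposal for Lemma \ref{sigmas}.} The plan is to reduce all three assertions to two verifications: (i) each indecomposable $A$-module that labels a vertex of $\Sigma'_{A_1\cap A_2}$ (resp.\ $\Sigma'_{A_1}$, $\Sigma'_{A_2}$) can be identified, via restriction, with an indecomposable module over $A_1\cap A_2$ (resp.\ $A_1$, $A_2$); and (ii) the almost split sequences around these modules agree in the two relevant module categories. Once (i) and (ii) are established, fullness is automatic (the meshes match), and convexity follows because every indecomposable sitting on a path between two such vertices is forced by the mesh equalities to belong to the same subcategory.

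For (i) I would argue by support. Being outside $\mathcal{A}=\mathrm{Pred}\,DA'$ means an indecomposable $A$-module has no non-zero morphism landing in an injective at a vertex of $(Q_{A_2})_0\setminus (Q_{A_1\cap A_2})_0$, hence its support is contained in $Q_{A_1}$; being outside $\mathcal{C}=\mathrm{Succ}\,C'$ forces the support into $Q_{A_2}$. Thus objects of $\mathcal{B}$ are supported on $Q_B$, so they are indecomposable $B$-modules (and hence also indecomposable $A_1$- and $A_2$-modules via the surjections $A_1\fepi B$, $A_2\fepi B$). The same argument yields that $\mathrm{ind}\,A\setminus \mathcal{A}$ consists of indecomposable $A_1$-modules and $\mathrm{ind}\,A\setminus \mathcal{C}$ of indecomposable $A_2$-modules. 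Lemma \ref{proyiny} is exactly the compatibility we need for the projectives and injectives of the various algebras.

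For (ii) I would iterate Propositions \ref{lema S-S} and \ref{dual lema S-S}. The point is that $A_1$ is reached from $B$ by a sequence of one-point (co)extensions and $A$ is then reached from $A_1$ by further (co)extensions, and analogously through $A_2$; the linearly oriented condition is precisely what guarantees that at each step the module $X$ or $Y$ used in the extension satisfies the vanishing Hom-condition ($\mathrm{Hom}(X,L)=0$ or $\mathrm{Hom}(N,Y)=0$) for every almost split sequence $0\to L\to M\to N\to 0$ whose end terms lie in the subcategory we are tracking. Hence the almost split sequences of $\mathrm{mod}\,B$ with end terms in $\mathcal{B}$ remain almost split in $\mathrm{mod}\,A_1$, in $\mathrm{mod}\,A_2$, and in $\mathrm{mod}\,A$, proving fullness for $\Sigma'_{A_1\cap A_2}$; the same machinery applied one layer up yields fullness for $\Sigma'_{A_1}$ and $\Sigma'_{A_2}$.

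Convexity, which I expect to be the main obstacle, will follow from the same mesh-preservation statement: if a path of irreducible morphisms in $\Gamma_A$ (resp.\ $\Gamma_{A_1}$) has endpoints in $\mathcal{B}$ (resp.\ in $\mathrm{ind}\,A\setminus \mathcal{A}$), then each intermediate indecomposable appears in a mesh whose flanking terms lie in the same subcategory; by the linearly oriented hypothesis the classes $\mathcal{A}$ and $\mathcal{C}$ are respectively closed under predecessors and successors, so no intermediate module can slip into $\mathcal{A}\cup\mathcal{C}$ without dragging an endpoint along. The delicate step is verifying closure of $\mathcal{A}$ under $\tau$ and predecessors and of $\mathcal{C}$ under $\tau^{-1}$ and successors inside each of $\Gamma_{A_1},\Gamma_{A_2},\Gamma_{A_1\cap A_2}$; here one uses again Lemma \ref{proyiny} and the one-point (co)extension propositions to rule out that an almost split sequence in the smaller category acquires a new summand when passing to the larger one.
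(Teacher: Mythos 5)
First, note that the paper itself offers no proof of this statement: it is recalled verbatim as \cite[Lemma 8]{CW}, so the only internal point of comparison is the paper's proof of the closely analogous Proposition \ref{omegas}. That proof does not go through almost split sequences at all; it transfers irreducibility between $\mbox{mod}\,A_1$ and $\mbox{mod}\,(A_1\cap A_2)$ by a direct factorization argument (any intermediate object $L$ of a factorization $f=gh$ is forced into the smaller category by the closure of $\mathcal{C}$ under successors, and \cite[Lemma 6]{CW} matches split monomorphisms and epimorphisms in the two categories), and it uses \cite[Lemma 5]{CW} for your step (i). Your step (i) and your convexity argument (closure of $\mathcal{A}$ under predecessors and of $\mathcal{C}$ under successors) are essentially correct and consistent with that strategy.

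The gap is in your step (ii). You claim that the linearly oriented condition guarantees, at every one-point (co)extension step, the hypothesis $\mbox{Hom}(X,L)=0$ of Proposition \ref{lema S-S} for every almost split sequence $0\to L\to M\to N\to 0$ whose end terms lie in the subcategory being tracked, and you conclude that ``the meshes match,'' whence fullness is automatic. Neither assertion holds. When the extension adjoins a vertex $v\in (Q_{A_1})_0\setminus(Q_{A_1\cap A_2})_0$, the extension module is $X=\mbox{rad}\,P_v$, whose support lies inside the already-built quiver and typically meets $Q_{A_1\cap A_2}$; the condition $L\notin\mbox{Succ}\,C'$ (i.e.\ $L\in\mathcal{B}$) says nothing about $\mbox{Hom}(\mbox{rad}\,P_v,L)$, because the indecomposable summands of $\mbox{rad}\,P_v$ are predecessors, not successors, of $P_v$. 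In fact such a summand $L$ can itself lie in $\mathcal{B}$ --- Theorem \ref{categ} of this paper is devoted to producing exactly such summands --- and for that $L$ the almost split sequence starting at $L$ genuinely changes when passing from $\mbox{mod}\,(A_1\cap A_2)$ to $\mbox{mod}\,A$: its middle term acquires the new projective $P_v$. So meshes around $\mathcal{B}$-vertices need not be preserved; only the arrows between two vertices that both lie in $\mathcal{B}$ are, and that weaker statement is what actually requires proof (by the factorization method of Proposition \ref{omegas}, or as in \cite{CW}). The paper's Lemma \ref{1} and Lemma \ref{10} illustrate how nontrivial the Hom-vanishing bookkeeping is even in the special case of sectional paths emanating from the extension module itself, which is further evidence that it cannot be waved through for arbitrary almost split sequences.
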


From now on, we consider the category $\mathcal{B} \neq \emptyset$. Moreover, we consider $\mathcal{P}_{\mathcal{B}}$  to be  the full subcategory of $\mbox{mod}\, A_1$ of the predecessors of the modules in $\mathcal{B}$ and $\mathcal{S}_{\mathcal{B}}$ to be
the full subcategory of $\mbox{mod}\ A_2$ of the successors of the modules in $\mathcal{B}$.

We also consider $\Omega_{A_1}$  to be the full subquiver of $\Gamma_{A_1}$  where the vertices are the objects of $\mathcal{P}_{\mathcal{B}}$,   and   $\Omega_{A_2}$ the full subquiver of $\Gamma_{A_2}$ where the vertices are the objects of $\mathcal{S}_{\mathcal{B}}$.

\begin{prop}\label{omegas}
Let $A$ be the linearly oriented pullback of $A_1\fepi A_1\cap A_2$ and  $A_2\fepi A_1\cap A_2$.
Consider $\Omega_{A_1}$ and $\Omega_{A_2}$ as giving above. Then the following statements hold.
\benu
\item[(a)] $\Omega_{A_1}$ is a full and convex subquiver of $\Gamma_{A_1\cap A_2}$.
\item[(b)] $\Omega_{A_2}$ is a full and convex subquiver of $\Gamma_{A_1\cap A_2}$.
\enu
\end{prop}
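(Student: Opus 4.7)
The plan is to prove (a); statement (b) will follow by a dual argument, replacing one-point extensions with one-point coextensions and using Proposition~\ref{dual lema S-S} in place of Proposition~\ref{lema S-S}. Throughout the proof I will write $B:=A_1\cap A_2$.

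My first step would be to verify that every indecomposable $X\in\mathcal{P}_{\mathcal{B}}$ is supported on $(Q_B)_0$, and therefore lies in $\mbox{ind}\,B$. I would argue by contradiction: if $X_c\neq 0$ for some $c\in (Q_{A_1})_0\setminus(Q_B)_0$, then by Lemma~\ref{proyiny} the projective $P_c$ is a direct summand of $C'$ and $\mbox{Hom}_{A_1}(P_c,X)\cong X e_c\neq 0$ supplies a non-zero morphism $P_c\to X$. Concatenating with a path $X\to X_1\to\cdots\to X_t=M$ of non-zero morphisms in $\mbox{ind}\,A_1\subseteq\mbox{ind}\,A$ witnessing $X\in\mathcal{P}_{\mathcal{B}}$ (with $M\in\mathcal{B}$) would give a path from $P_c$ to $M$ in $\mbox{ind}\,A$, forcing $M\in\mbox{Succ}\,C'=\mathcal{C}$ and contradicting $M\in\mathcal{B}$.

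Convexity should then follow immediately: given vertices $X,Y$ of $\Omega_{A_1}$ and any path $X=Z_0\to Z_1\to\cdots\to Z_k=Y$ of irreducible morphisms in $\Gamma_B$, each $Z_i$ is a $B$-module and each arrow is a non-zero morphism in $\mbox{mod}\,B\subseteq\mbox{mod}\,A_1$. Concatenating the tail $Z_i\to\cdots\to Y$ with the witnessing path $Y\to\cdots\to M$ for $Y\in\mathcal{P}_{\mathcal{B}}$ would place $Z_i\in\mathcal{P}_{\mathcal{B}}$. For fullness, I would show that the arrows of $\Gamma_{A_1}$ and $\Gamma_B$ between any two vertices of $\Omega_{A_1}$ coincide. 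The linearly oriented pullback hypothesis forbids arrows in $Q_{A_1}$ from a $B$-vertex to a vertex of $(Q_{A_1})_0\setminus(Q_B)_0$, which combined with Lemma~\ref{proyiny}(3) gives $e_j A_1=e_j B$ and $\mbox{rad}\,P_j^{A_1}=\mbox{rad}\,P_j^B$ for every $j\in (Q_B)_0$. I would use these identifications to argue that projective resolutions of any $B$-module $L$ agree in $\mbox{mod}\,B$ and $\mbox{mod}\,A_1$ and that $\tau_B L=\tau_{A_1}L$, so the almost split sequence ending at each non-projective $B$-module is the same in both categories, matching irreducible morphisms between $B$-modules; the case of a projective $B$-module target is then handled analogously via the equality of radicals.

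\emph{Main obstacle.} The hardest step will be the fullness verification: although $e_j A_1=e_j B$ at the level of the right-sided idempotent pieces which describe $P_j$, the left-sided $A_1 e_j$ can properly contain $B e_j$ because of paths entering $j$ from new source vertices. One must therefore carefully show that these extra paths lie in the image of the differential induced from the projective resolution and so cancel in the cokernel computing $\mbox{Ext}^1_{A_1}(L,A_1)$ whenever $L$ is a $B$-module. This cancellation is what ultimately forces $\tau_{A_1}|_{\mbox{mod}\,B}=\tau_B$ and thus identifies the two AR-structures on $\Omega_{A_1}$.
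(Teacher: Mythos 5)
Your treatment of the vertex set (every module in $\mathcal{P}_{\mathcal{B}}$ lies in $\mbox{ind}\,(A_1\cap A_2)$, since otherwise a projective summand of $C'$ maps to it and drags a module of $\mathcal{B}$ into $\mathcal{C}$) and of convexity is correct and essentially matches the paper. The genuine gap is in the fullness step, which is the heart of the proposition and which you yourself flag as an unresolved ``main obstacle.'' Your plan is to identify the two Auslander--Reiten structures by showing $\tau_{A_1}L=\tau_{A_1\cap A_2}L$ via a comparison of projective resolutions and a cancellation in $\mbox{Ext}^1_{A_1}(L,A_1)$. This is not carried out, and it is also the wrong tool: for a general $(A_1\cap A_2)$-module $L$ the translates $\tau_{A_1}L$ and $\tau_{A_1\cap A_2}L$ (and the middle terms of the corresponding almost split sequences) genuinely can differ --- this is precisely why Proposition~\ref{lema S-S} carries the hypothesis $\mbox{Hom}_A(X,L)=0$ rather than asserting that all almost split sequences are preserved. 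Even if the $\tau$'s were shown to agree, equality of the starting and ending terms does not by itself yield that the $B$-almost-split sequence remains almost split over $A_1$, since the lifting property must be tested against all $A_1$-modules, including those outside $\mbox{mod}\,(A_1\cap A_2)$.

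What is actually needed, and what the paper does, is much softer and uses no homological computation at all. Since $\mbox{mod}\,(A_1\cap A_2)$ is a \emph{full} subcategory of $\mbox{mod}\,A_1$, any factorization inside the small category is one in the large category, which gives the subquiver direction (together with the preservation of split monos/epis from \cite[Lemma 6]{CW}). For the converse (fullness), take $f\colon M\to N$ irreducible in $\mbox{mod}\,(A_1\cap A_2)$ with $M,N\in\mathcal{P}_{\mathcal{B}}$ and a factorization $f=gh$ through some $L\in\mbox{mod}\,A_1$; the only issue is that $L$ might not be an $(A_1\cap A_2)$-module, and this is excluded exactly by the predecessor argument you already used for the vertex set: if $L\notin\mbox{ind}\,(A_1\cap A_2)$ then $L\in\mathcal{C}$ by \cite[Lemma 5 (ii)]{CW}, yet $N$ (hence some $X\in\mathcal{B}$) is a successor of $L$, contradicting that $\mathcal{C}$ is closed under successors. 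Once $L\in\mbox{mod}\,(A_1\cap A_2)$, irreducibility of $f$ there forces $h$ split mono or $g$ split epi, and these properties transfer back to $\mbox{mod}\,A_1$. You should replace your $\tau$-comparison strategy with this argument; as written, your proposal does not establish fullness.
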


\begin{proof} We only prove  statement $(a)$, since $(b)$ follows similarly.

First, we prove that the $A_1$-modules of $\mathcal{P}_{\mathcal{B}}$, so are $A_1\cap A_2$-modules.
In fact, let $M \in \mathcal{P}_{\mathcal{B}}$, and
assume that $M\in \mbox{ind}\,(A_1)\backslash \mbox{ind}\,(A_1\cap A_2)$. By \cite[Lemma 5 (ii)]{CW},
$M\in \mathcal{C}$. Moreover, since $M \in \mathcal{P}_{\mathcal{B}}$ there exists an indecomposable successor of $M$, let say  $N\in {\mathcal{B}}$. Since  ${\mathcal{C}}$ is closed under successors,
then $N\in {\mathcal{C}} $ which is a contradiction. Then, all the indecomposable modules of $\mathcal{P}_{\mathcal{B}}$
belong to $\mbox{ind}\,(A_1\cap A_2)$, and therefore $(\Omega_{A_1})_0\subset (\Gamma_{A_1\cap A_2})_0$.

Secondly, we prove that $\Omega_{A_1}$ is a full  subquiver of $\Gamma_{A_1\cap A_2}$. Consider $f:M\fle N$
to be an irreducible morphism in $\mbox{ind}\,A_1$, with $M,N\in \mathcal{P}_{\mathcal{B}}$. Let us
prove that $f$ is irreducible in $\mbox{mod}\,(A_1\cap A_2).$
By \cite[Lemma 6]{CW},  $f$ is neither a split monomorphism nor a split epimorphism in $\mbox{mod}\,(A_1\cap A_2).$
Suppose that $f=gh$ with $h:M \rightarrow L$ and $g:L \rightarrow N$ morphisms in
$\mbox{mod}\,(A_1\cap A_2)$. Then $h$ and $g$ so are morphisms in $\mbox{mod}\,A_1$,
since $\mbox{mod}\,(A_1\cap A_2)$ is a full subcategory of $\mbox{mod}\,A_1$.
Therefore, either $h$ is a split monomorphism in $\mbox{mod}\,A_1$ or $g$
is a split epimorphism in $\mbox{mod}\,A_1,$ and by \cite[Lemma 6]{CW}
so they are in $\mbox{mod}\,(A_1\cap A_2)$. Thus, every arrow of $\Omega_{A_1}$
is an arrow of $\Gamma_{A_1\cap A_2},$ proving that $\Omega_{A_1}$ is a subquiver
of $\Gamma_{A_1\cap A_2}$.

Now, we concentrate to prove that $\Omega_{A_1}$ is a full subquiver of $\Gamma_{A_1\cap A_2}$. Let $f:M \rightarrow N$ be an irreducible morphism in
$\mbox{mod}\,(A_1\cap A_2)$, with $M, N\in \mathcal{P}_{\mathcal{B}}$. We prove that $f$ is irreducible in $\mbox{mod}\,A_1$. Indeed,
$f$ is neither a split monomorphism nor a split epimorphism in $\mbox{mod}\,(A_1\cap A_2)$,
so neither it is in $\mbox{mod}\,A_1$.
Let $L\in \mbox{mod}\,A_1$, $h:M \rightarrow L$ and $g:L \rightarrow N$ be morphisms in
$\mbox{mod}\,A_1$ such that $f=gh$.

We claim that $L$ belongs to
$\mbox{mod}\,(A_1\cap A_2)$. In fact, otherwise, by \cite[Lemma 5 (ii)]{CW}
we have that $L\in \mathcal{C}$. Moreover, since $N\in \mathcal{P}_{\mathcal{B}}$
there is an indecomposable $A_1$-module $X$, such that $X\in \mathcal{B}$ and $X$
is a successor of $N$. Therefore, $X$ is a successor of $L$ and since $\mathcal{C}$ is closed under successors, we infer that $X \in \mathcal{C}$,
which is a contradiction. Thus, $L\in \mbox{mod}\,(A_1\cap A_2)$.

On the other hand, since $f$ is irreducible in $\mbox{mod}\,(A_1\cap A_2)$, then
either $h$ is a split monomorphism in $\mbox{mod}\,(A_1\cap A_2)$ or $g$
is a split epimorphism in $\mbox{mod}\,(A_1\cap A_2),$ and hence, by \cite[Lemma 6]{CW}
so they are in $\mbox{mod}\,A_1$.

Finally, the convexity of $\Omega_{A_1}$ follows from the convexity of $\mathcal{P}_{\mathcal{B}}$
in $\mbox{mod}\,A_1$.
\end{proof}

Now, we are in position to prove one of the main results of this section.

\begin{teo} \label{nilpo}
Let $A$ be the linearly oriented pullback of $A_1\fepi A_1\cap A_2$ and  $A_2\fepi A_1\cap A_2$, with $A_1$ and $A_2$
hereditary algebras, and assume that $\Gamma_A$ is with length.
Let $\mathcal{A}$, $\mathcal{C}$ and $\mathcal{B}$ be the subcategories of $\emph{mod}\, A$ defined above.
If $\mathcal{B}\neq \emptyset$ then $r_A=r_{A_1}+r_{A_2}-r_{A_1\cap A_2}$.
\end{teo}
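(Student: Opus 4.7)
Pick any vertex $u \in (Q_{A_1\cap A_2})_0$; this set is nonempty because $\mathcal{B}\neq \emptyset$. By Remark \ref{Intersection-vertices}, $r_A = r_u(A)+1$, where $r_u(A)$ denotes the length of a path of irreducible morphisms from $P_u$ to $I_u$ through $S_u$ in $\mbox{mod}\,A$. Lemma \ref{proyiny} provides the identifications $P_u^A = P_u^{A_2}$, $P_u^{A_1} = P_u^{A_1\cap A_2}$, $I_u^A = I_u^{A_1}$, $I_u^{A_2} = I_u^{A_1\cap A_2}$, while $S_u$ lies in all four module categories. The strategy is to split a path realising $r_u(A)$ at $S_u$ and control each half via the pullback subquivers of Lemma \ref{sigmas} and Proposition \ref{omegas}.

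The sub-path $P_u \rightsquigarrow S_u$ in $\Gamma_A$ has both endpoints outside $\mathcal{A}$ (both are supported only on $A_2$-vertices, hence admit no non-zero morphism to any injective at a pure $A_1$-vertex); since $\mathcal{A}$ is closed under predecessors, its complement $\Sigma'_{A_1}$ is closed under predecessors and, dually, $\Sigma'_{A_2}$ is closed under successors, so the whole sub-path lies in $\Sigma'_{A_2}$. By Lemma \ref{sigmas}(3), $\Sigma'_{A_2}$ is full and convex in $\Gamma_{A_2}$, hence $\ell_A(P_u\rightsquigarrow S_u) = \ell_{A_2}(P_u\rightsquigarrow S_u)$. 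Dually $\ell_A(S_u\rightsquigarrow I_u) = \ell_{A_1}(S_u\rightsquigarrow I_u)$ via $\Sigma'_{A_1}$. An entirely analogous argument using Proposition \ref{omegas} yields $\ell_{A_1}(P_u \rightsquigarrow S_u) = \ell_{A_1\cap A_2}(P_u \rightsquigarrow S_u)$ and $\ell_{A_2}(S_u\rightsquigarrow I_u) = \ell_{A_1\cap A_2}(S_u \rightsquigarrow I_u)$.

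Writing $a = \ell_{A_1\cap A_2}(P_u \rightsquigarrow S_u)$, $b = \ell_{A_2}(P_u \rightsquigarrow S_u)$, $c = \ell_{A_1}(S_u \rightsquigarrow I_u)$ and $d = \ell_{A_1\cap A_2}(S_u \rightsquigarrow I_u)$, the four equalities give $r_u(A) = b+c$, $r_u(A_1) = a+c$, $r_u(A_2) = b+d$ and $r_u(A_1\cap A_2) = a+d$, whence
\[
r_u(A) \;=\; r_u(A_1) + r_u(A_2) - r_u(A_1\cap A_2).
\]
To pass from this to the nilpotency indices, I would then show that for each hereditary representation-finite algebra $H\in\{A_1,A_2,A_1\cap A_2\}$ the integer $r_v(H)$ is independent of $v\in (Q_H)_0$. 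Since $Q_H$ is a tree, $\Gamma_H$ is simply connected and in particular a component with length; applying the parallel-path argument of Lemma \ref{zero-vertex}(a) to any arrow $a\to b$ of $Q_H$, using the irreducible morphisms $P_b\to P_a$ (since $P_b$ is a direct summand of $\mbox{rad}\,P_a$) and $I_b\to I_a$ (dual construction, available in any hereditary algebra), one gets $r_a(H)=r_b(H)$, and connectivity of $Q_H$ propagates the equality. Combined with Theorem \ref{ppalher}, this common value necessarily equals $r_H-1$. Substituting,
\[
r_A = r_u(A)+1 = (r_{A_1}-1)+(r_{A_2}-1)-(r_{A_1\cap A_2}-1)+1 = r_{A_1}+r_{A_2}-r_{A_1\cap A_2}.
\]

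The main obstacle I anticipate is the careful verification of the four length equalities: one must check that every intermediate indecomposable along each sub-path actually lies in the prescribed full-and-convex subquiver ($\Sigma'_{A_i}$ or $\Omega_{A_i}$), which rests on the closure properties of $\mathcal{A}$ and $\mathcal{C}$ established in \cite{CW}. A secondary subtlety is the constancy of $r_v(H)$ for hereditary $H$: although natural given the with-length property of $\Gamma_H$, a uniform treatment of all arrows of $Q_H$ is needed so that the parallel-path diagram underlying Lemma \ref{zero-vertex}(a) is available at every step.
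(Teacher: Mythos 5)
Your overall skeleton (split a path $P\rightsquigarrow I$ at an intermediate module, transport each half through the convex subquivers $\Sigma'_{A_i}$ and $\Omega_{A_i}$, and conclude by an inclusion--exclusion of lengths using heredity of $A_1$, $A_2$ and $A_1\cap A_2$) is exactly the paper's, and your final step on the constancy of $r_v(H)$ over the vertices of a hereditary algebra is a legitimate expansion of what the paper uses implicitly. But there is a genuine gap at the heart of the argument: you split the path at $S_u$ for an \emph{arbitrary} $u\in (Q_{A_1\cap A_2})_0$, and nothing guarantees that $S_u$ lies in $\mathcal{B}$. The hypothesis $\mathcal{B}\neq\emptyset$ must be used to \emph{choose} the splitting module: the paper takes $M\in\mathcal{B}$ and lets $a$ be a vertex of its projective cover; then $M\notin\mathcal{C}$ forces $P_a$ and the whole path $P_a\rightsquigarrow M$ into $\Sigma'_{A_2}$, $M\notin\mathcal{A}$ forces $M\rightsquigarrow I_a$ into $\Sigma'_{A_1}$, and $M\in\mathcal{B}$ is what places the complementary paths $\rho\colon M\rightsquigarrow I'_a$ and $\theta\colon P'_a\rightsquigarrow M$ inside $\Omega_{A_2}$ and $\Omega_{A_1}$ (which are defined as successors/predecessors of $\mathcal{B}$). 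The with-length hypothesis then converts $\ell(P_a\rightsquigarrow M\rightsquigarrow I_a)$ into $r_a$. In your version you only use $\mathcal{B}\neq\emptyset$ to assert that $(Q_{A_1\cap A_2})_0$ is nonempty (which is true anyway), so the hypothesis plays no essential role in your proof; yet the example following the corollary ($\mathcal{B}=\emptyset$, $r_A=17$ while $r_{A_1}+r_{A_2}-r_{A_1\cap A_2}=13$) shows the formula fails without it. Your four length equalities are therefore unjustified as stated.

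The specific justification you offer also does not work. Membership outside $\mathcal{A}$ places a module in $\Sigma'_{A_1}=\mbox{ind}\,A\setminus\mathcal{A}$, not in $\Sigma'_{A_2}=\mbox{ind}\,A\setminus\mathcal{C}$, so you are arguing about the wrong subcategory for the half $P_u\rightsquigarrow S_u$; moreover the claim that $P_u\notin\mathcal{A}$ is false in general: if $u$ is an interior vertex of the zero-relation and $a=s(\alpha_1)$, then $I_a$ is a direct summand of $DA'$ and $\mbox{Hom}_A(P_u,I_a)\neq 0$ (the subpath of the relation from $a$ to $u$ is nonzero), so $P_u\in\mbox{Pred}\,DA'=\mathcal{A}$. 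The closure directions are also swapped: $\mathcal{A}$ is closed under predecessors, hence $\Sigma'_{A_1}$ is closed under \emph{successors}, and dually $\Sigma'_{A_2}$ is closed under \emph{predecessors}. To repair the proof you should replace the splitting point $S_u$ by a module $M\in\mathcal{B}$, derive $r_A=\ell(P_a\rightsquigarrow M)+\ell(M\rightsquigarrow I_a)+1$ from Remark \ref{Intersection-vertices} together with the with-length hypothesis, and only then run your inclusion--exclusion; your concluding observation that $r_v(H)=r_H-1$ for every vertex of a hereditary algebra can stay as the justification of the analogues of equations (\ref{rA2})--(\ref{rA1A2}).
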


\begin{proof}
Let $M$ be an indecomposable $A$-module in  $\mathcal{B}$ and
$P_a$ be an indecomposable direct summand of the projective cover of $M$.
Then, there exist non-zero morphisms $f:P_a \rightarrow M$ and $g:M \rightarrow I_a$.
Since $A$ is representation-finite, there exist paths of irreducible
morphisms between indecomposable modules $\phi:P_a\rightsquigarrow M$ and
$\psi: M\rightsquigarrow I_a$  in $\mbox{mod}\, A$.
Since $M \in \mbox{mod}(A_1\cap A_2)$, then $a\in (Q_{A_1\cap A_2})_0$.
By Remark \ref{Intersection-vertices}, the nilpotency index of
$\Re(\mbox{mod}\, A)$ is equal to $\ell(P_a \rightsquigarrow S_a\rightsquigarrow I_a)+1.$
Moreover, since $\Gamma_A$ is with length, then
\begin{equation}\label{rA}
r_A=\ell(\phi)+\ell(\psi)+1.
\end{equation}

Since $M\in \mathcal{B}$ then $M\in (\Sigma'_{A_2})_0$. On the other hand, $P_a\in  (\Sigma'_{A_2})_0$,
otherwise, $P_a\in \mathcal{C}$
and since $M$ is a successor of $P_a$, this implies that $M\in \mathcal{C}$ which is a contradiction.
Hence, since $\Sigma'_{A_2}$ is a convex subquiver of $\Gamma_A$, then the path $\phi$
is in $\sum'_{A_2}$. By Lemma \ref{sigmas} we have that  $\phi:P_a\rightsquigarrow M$
is also a path  in $\Gamma_{A_2}.$
Moreover, since $a\in (Q_{A_2})_0$, by Lemma
\ref{proyiny} we have that $P_a$ is the projective
$A_2$-module corresponding to the vertex $a$.

Let $\rho: M\rightsquigarrow I'_a$ be a path of irreducible morphisms in $\mbox{mod}\, A_2$,
where $I'_a$ is the injective $A_2$-module corresponding to the vertex $a$.
Since $A_2$ is hereditary then
\begin{equation}\label{rA2}
r_{A_2}=\ell(\phi)+\ell( \rho)+1.
\end{equation}

By construction, $\rho$ belongs to $\Omega_{A_2}$ since $M\in \mathcal{B}$, and moreover by Lemma \ref{omegas} $\rho$ is a path of irreducible morphisms
between indecomposable modules in $\mbox{mod}\,(A_1\cap A_2)$. By
Lemma \ref{proyiny}, the module $I'_a$ is the injective $(A_1\cap A_2)$-module
corresponding to the vertex $a$.

With similar arguments as above, the path $\psi: M\rightsquigarrow I_a$ in $\mbox{mod}\,A$
is also a path of irreducible morphisms between indecomposable modules in $\mbox{mod}\,A_1$ and
the module $I_a$ is the injective $A_1$-module corresponding to the vertex $a$.
If we consider $\theta:P'_a\rightsquigarrow M$ a path in $\Gamma_{A_1}$, where $P'_a$
is the projective $A_1$-module corresponding to the vertex $a$, we have that

\begin{equation}\label{rA1}
r_{A_1}=\ell(\theta)+\ell(\psi)+1,
\end{equation}

\noindent because $A_1$ is a hereditary algebra.
Moreover, the path $\theta$ is also a path in $\Gamma_{A_1\cap A_2}$ and the module
$P'_a$ is the projective $(A_1\cap A_2)$-module corresponding to the vertex $a$.
Therefore, since the algebra $A_1\cap A_2$ is hereditary, we also have that

\begin{equation}\label{rA1A2}
r_{A_1\cap A_2}=\ell(\theta)+\ell(\rho)+1.
\end{equation}

From the equalities (\ref{rA}), (\ref{rA2}), (\ref{rA1}) and (\ref{rA1A2}) we obtain that $r_A=r_{A_1}+r_{A_2}-r_{A_1\cap A_2}$.
\end{proof}

\begin{coro}
Let $A$ be a monomial tree algebra with only one zero-relation.%  of length $m$, with $m \geq 3$.
Let $A_1$ and  $A_2$ be defined as in Section \ref{A1A2}.
If $\mathcal{B}\neq \emptyset$, then $r_A=r_{A_1}+r_{A_2}-r_{A_1\cap A_2}$.
\end{coro}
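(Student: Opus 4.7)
The plan is to deduce this corollary as an immediate consequence of Theorem \ref{nilpo}, by verifying that under the hypotheses of a single zero-relation in a monomial tree algebra, the three structural requirements of that theorem are automatically in force. Write the unique zero-relation as $\alpha_m\cdots\alpha_1$, and recall from Section \ref{A1A2} that $A_1$ (respectively $A_2$) is the maximal connected hereditary subalgebra of $A$ obtained from $Q_A$ by deleting the arrow $\alpha_1$ (respectively $\alpha_m$).

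First I would observe that $A$ is the linearly oriented pullback of the surjections $A_1 \fepi A_1\cap A_2$ and $A_2 \fepi A_1\cap A_2$; this was already pointed out in Section \ref{A1A2} and follows directly from the construction of $A_1$ and $A_2$ as the two pieces into which $Q_A$ splits when one removes the endpoints of the zero-relation. Second, I would verify that $A_1$, $A_2$ and $A_1\cap A_2$ are hereditary algebras: since $A$ is monomial and the only relation is $\alpha_m\cdots\alpha_1$, deleting either $\alpha_1$ or $\alpha_m$ destroys that relation without introducing any new one, so the resulting bound quivers have empty admissible ideal and are path algebras of trees, hence hereditary. Third, I would argue that $\Gaa$ is a component with length: since $A$ is representation-finite and its ordinary quiver is a tree, the orbit graph $O(\Gaa)$ is of tree type, and by the Bongartz--Gabriel criterion recalled in \ref{orbit}, this forces $\Gaa$ to be simply connected and therefore with length.

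With these three facts in hand and the standing assumption $\mathcal{B}\neq\emptyset$, every hypothesis of Theorem \ref{nilpo} is satisfied, and the identity $r_A=r_{A_1}+r_{A_2}-r_{A_1\cap A_2}$ follows at once. I expect the only genuinely non-routine check to be the verification that $\Gaa$ is with length, which rests on the tree-type of the orbit graph; once that is secured, the corollary is a bare invocation of the theorem already proved.
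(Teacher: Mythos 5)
Your proposal is correct and takes essentially the same route the paper intends: the corollary is stated without a separate proof precisely because Section \ref{A1A2} already records that $A$ is the linearly oriented pullback of $A_1\fepi A_1\cap A_2$ and $A_2\fepi A_1\cap A_2$ with $A_1$, $A_2$ hereditary by construction, and the component-with-length hypothesis is supplied by the tree-type orbit graph argument recalled in \ref{orbit} (and already invoked before Corollary \ref{coro-vertex}), so Theorem \ref{nilpo} applies verbatim. Your checklist of the three hypotheses matches the paper's implicit reasoning exactly.
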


The next example shows that $\mathcal{B}\neq \emptyset$ is a necessary condition in  Theorem \ref{nilpo} to be true.

\begin{ej} \emph{ Consider the algebra $A=kQ_A/I_A$ given  by the following presentation}

$${\xymatrix   @R=0.4cm  @C=0.9cm {
&&1\ar[d]_{\alpha}&&\\
7\ar[r]&4&2\ar[r]_{\beta}\ar[l]&5\ar@{--}@/^/[ul]&6\ar[l]\\
&&3\ar[u]&&}\\}$$

\noindent \emph{where $I_A=<\beta\alpha>$.} \emph{ Computing the Auslander-Reiten quiver of $\mbox{mod}\,A$ one can see that $\mathcal{B}=\emptyset$ and that the length of any path from $P_2$ to $I_2$ is equal to 16. Hence $r_A=17$ but $r_{A_1}+r_{A_2}-r_{A_1\cap A_2}=13$, where $A_1$ is the hereditary algebra given by the quiver }

$${\xymatrix   @R=0.3cm  @C=0.6cm {
&&1\ar[d]&\\
7\ar[r]&4&2\ar[l]&3\ar[l]\\
}}$$

\noindent \emph{ $A_2$ is the hereditary algebra given by the quiver }

$${\xymatrix   @R=0.3cm  @C=0.6cm {
7\ar[r]&4&2\ar[r]\ar[l]&5&6\ar[l]\\
&&3\ar[u]&&}}$$

\noindent \emph{ and $A_1\cap A_2$ is the algebra }

$${\xymatrix   @R=0.3cm  @C=0.6cm {
7\ar[r]&4&2\ar[l]&3\ar[l]
}.}$$
\end{ej}

We dedicate the remaining  part of this paper to find monomial tree algebras where $\mathcal{B}\neq \emptyset$.

We start proving a useful  relationship between the categories $\mathcal{A} \cap \mathcal{C}$ and   $\mathcal{B}$.

\begin{teo} \label{categ}
Let $A$ be the linearly oriented pullback of $A_1\fepi A_1\cap A_2$ and  $A_2\fepi A_1\cap A_2$, with $A_1$ and $A_2$
hereditary algebras, and assume that $\Gamma_A$ is with length.
Let $\mathcal{A}$, $\mathcal{C}$ and $\mathcal{B}$ be the subcategories of $\emph{ind}\, A$ defined above.
If  $\mathcal{A} \cap \mathcal{C} = \emptyset$ then  $\mathcal{B}\neq \emptyset$.
\end{teo}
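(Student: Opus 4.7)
My plan is to prove the contrapositive: assume $\mathcal{B} = \emptyset$ and derive that $\mathcal{A} \cap \mathcal{C} \neq \emptyset$. The two definitional observations I would record first are that $\mathcal{A}$ is closed under predecessors in $\mbox{ind}\,A$ (concatenating $M \rightsquigarrow N$ with a path $N \rightsquigarrow I_i$, $I_i\in DA'$, witnesses $M\in\mathcal{A}$) and, dually, that $\mathcal{C}$ is closed under successors. I would also invoke the standard fact that, for a connected representation-finite algebra $A$, the Auslander--Reiten quiver $\Gamma_A$ is connected. Under the contrapositive assumption, $\mbox{ind}\,A = \mathcal{A}\cup\mathcal{C}$; if in addition $\mathcal{A}\cap\mathcal{C}=\emptyset$ this becomes a disjoint partition of the nonempty vertex set of $\Gamma_A$.

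By connectedness, $\Gamma_A$ must contain an irreducible morphism $M\to N$ with $M$ and $N$ in different pieces of the partition. If $M\in\mathcal{C}$ and $N\in\mathcal{A}$, then since $\mathcal{C}$ is closed under successors we conclude $N\in\mathcal{C}$, whence $N\in\mathcal{A}\cap\mathcal{C}$ -- contradiction. So the only remaining case is that every crossing arrow goes $\mathcal{A}\to\mathcal{C}$.

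To kill this remaining case I would trace a $\tau$-orbit. Pick $I_i\in DA' \subseteq \mathcal{A}$. Iteratively applying the Auslander--Reiten sequence $0\to \tau I \to E \to I \to 0$ at each stage, every summand of $E$ is a predecessor of $I$, hence in $\mathcal{A}$ by closure, and $\tau I$ itself lies in $\mathcal{A}$ for the same reason. Because $A$ is representation-finite the orbit $I_i, \tau I_i, \tau^2 I_i, \dots$ terminates at some projective $P_a$, and this $P_a$ lies in $\mathcal{A}$ by induction. The point is now to identify $a$: using the description of projectives in Lemma \ref{proyiny} together with the linear orientation of the pullback (so that the natural $\tau$-flow in $\Gamma_A$ respects the splitting $A_1\setminus B$, $B$, $A_2\setminus B$), the terminal projective that one reaches starting from an $I_i$ with $i\in (Q_{A_2})_0\setminus (Q_{A_1\cap A_2})_0$ is forced to have $a\in (Q_{A_1})_0\setminus (Q_{A_1\cap A_2})_0$. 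Hence $P_a\in C'\subseteq\mathcal{C}$, giving $P_a\in\mathcal{A}\cap\mathcal{C}$, the desired contradiction.

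The hard step, and the one where the full hypotheses of the theorem are really used, is the last identification: that the $\tau$-orbit of a bottom injective $I_i$ lands at a top projective $P_a$ with $a\in(Q_{A_1})_0\setminus (Q_{A_1\cap A_2})_0$. The input is the hereditary hypothesis on $A_1$ and $A_2$ (which makes $\tau$-orbits in $\Gamma_{A_1}$ and $\Gamma_{A_2}$ easy to control), the identification of projectives and injectives from Lemma \ref{proyiny}, the convexity statements for $\Sigma'_{A_1}$, $\Sigma'_{A_2}$, $\Sigma'_{A_1\cap A_2}$ in Lemma \ref{sigmas}, and finally the linear orientation of the pullback, which prevents the orbit from escaping into a projective of $A_1\cap A_2$ before it exits into $A_1\setminus B$. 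I expect most of the write-up's length to be devoted to this geometric tracking of the $\tau$-orbit through the three regions $\Sigma'_{A_1},\Sigma'_{A_1\cap A_2},\Sigma'_{A_2}$ of $\Gamma_A$.
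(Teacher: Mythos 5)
Your reduction to the case where every arrow of $\Gamma_A$ crossing the partition $\mbox{ind}\,A=\mathcal{A}\sqcup\mathcal{C}$ points from $\mathcal{A}$ to $\mathcal{C}$ is correct but easy; the entire content of the theorem is concentrated in the step you defer, and that step has a genuine gap. You assert that the $\tau$-orbit terminating at an injective $I_i$ with $i\in (Q_{A_2})_0\setminus (Q_{A_1\cap A_2})_0$ begins at a projective $P_a$ with $a\in (Q_{A_1})_0\setminus (Q_{A_1\cap A_2})_0$, and you present this as a consequence of the structural hypotheses alone (heredity of $A_1,A_2$, Lemma \ref{proyiny}, Lemma \ref{sigmas}, linear orientation). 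That is false. Take $Q_A\colon 1\xrightarrow{\alpha}2$, $2\xrightarrow{\beta}3$, $2\xrightarrow{\gamma}4$ with $I_A=\langle\beta\alpha\rangle$; then $A_1=k(3\leftarrow 2\rightarrow 4)$, $A_2=k(1\rightarrow 2\rightarrow 4)$ and $A_1\cap A_2=k(2\rightarrow 4)$ satisfy every hypothesis of the theorem, with $DA'=I_1$ and $C'=P_3$; yet knitting $\Gamma_A$ shows the $\tau$-orbit ending at $I_1$ is $P_2,\ \tau^{-1}P_2=S_2,\ \tau^{-2}P_2=S_1=I_1$, so it starts at $P_2$ with $2\in (Q_{A_1\cap A_2})_0$, not in $(Q_{A_1})_0\setminus(Q_{A_1\cap A_2})_0$. (In this example $\mathcal{A}\cap\mathcal{C}\neq\emptyset$, so the theorem is not contradicted, but it shows your key claim cannot follow from the ingredients you list; it could only hold, if at all, by feeding the hypotheses $\mathcal{B}=\emptyset$ and $\mathcal{A}\cap\mathcal{C}=\emptyset$ back into the orbit-tracking, and you give no indication of how to do that.) As written, the proposal establishes only the trivial half of the case analysis.

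For comparison, the paper argues directly rather than by contraposition: it picks a summand $P$ of $C'$ such that no summand of $\mbox{rad}\,P$ lies in $\mathcal{C}$, and a summand $I$ of $DA'$ such that no summand of $I/\mbox{soc}\,I$ lies in $\mathcal{A}$, and shows some indecomposable summand $R_1$ of $\mbox{rad}\,P$ lies in $\mathcal{B}$: if $R_1\in\mathcal{A}$ there is a path $R_1\rightsquigarrow I$, which cannot be sectional because $\mbox{Hom}_A(\mbox{rad}\,P,I)=0$, and applying $\tau^{-1}$ to its maximal sectional initial segment yields, together with the arrow $R_1\rightarrow P$, a module that is simultaneously a successor of $P$ and a predecessor of $I$, i.e.\ an element of $\mathcal{A}\cap\mathcal{C}$, contradicting the hypothesis. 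If you wish to keep your partition framework, you would still need an argument of essentially this sectional-path kind to close the remaining case.
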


\begin{proof} Let $P$ be a direct summand of $\mathcal{C}'$ and $I$ a direct summand of ${DA}'$,
such that no summand of $\mbox{rad}\, P$ is in $\mathcal{C}$ and no summand of $I/\mbox{soc}\,I$
is in $\mathcal{A}$. Consider $\mbox{rad}\, P= \oplus_{i=1}^{r} R_i$, where $R_i$ is an indecomposable direct summand of $\mbox{rad}\, P$. We prove that $R_i \in \mathcal{B}$ for some $i=1, \dots, r$.  Since $\mbox{Hom}_A(P, I) \, =0$ then  $\mbox{Hom}_A(\mbox{rad}\, P, I) \, =0$.
%Clearly, for each $i=1, \dots, r$ the module $R_i \notin \mathcal{C}$, otherwise there is a cycle in $\Gamma_A$.

Without loss of generality, consider that $R_1 \in \mathcal{A}$. Then there is a path of irreducible morphisms between indecomposable modules as follows
$R_1 \rightarrow X_1 \rightarrow \dots \rightarrow X_{n}  \rightarrow I$. Note that such a path  is not sectional, since $\mbox{Hom}_A(\mbox{rad}\, P, I) \, =0$. Therefore, for some  $i= 1,  \dots, n$, $X_{i+1} \simeq \tau^{-1} X_{i-1}$. Consider $s \in \{1, \dots, n \}$ to be the least integer such that $R_1 \rightarrow X_1 \rightarrow \dots \rightarrow X_{s}$  is sectional. We claim that none of the $X_i$ with $i=1, \dots, s$ is injective. In fact, otherwise, if for some $i \in \{1, \dots, n \}$ we have that $X_i=I'$, where $I'$ is an indecomposable injective then $\mbox{Hom}_A(R_1, I') \, \neq 0$. Thus $I' \in \mathcal{A} \cap \mathcal{C}$ contradicting the hypothesis.

On the other hand, since $X_1 \not \simeq P$ the we can build the path $P\rightarrow \tau^{-1}R_1 \rightarrow \tau^{-1} X_1 \rightarrow \dots \rightarrow \tau^{-1}X_{s-1}$ and in consequence  $\tau^{-1}X_{s-1} \in \mathcal{A} \cap \mathcal{C}$ contradicting the hypothesis. Hence, $R_1 \in \mathcal{B}$, proving the result.
\end{proof}

Now, we prove some technical results for our further purposes.

\begin{lema}\label{bypass}
Let $A$ be a representation-finite algebra and $\Gamma_A$ a component without cycles (not necessarily with length). Let  $P$ and $I$ be indecomposable  projective and injective $A$-modules, respectively. Let $M$ be an indecomposable $A$-module. The following conditions hold.
\begin{enumerate}
\item If there is an irreducible morphism from a direct summand of $\emph{rad}\, P$ to $M$ with $M \not \simeq P$ or if $M$ is a direct summand of $\emph{rad}\, P$  then $M$ is not a successor of $P$.
\item If there is an irreducible morphism from  $M$ to a direct summand of $I/\emph{soc}\, I$ with $M \not \simeq I$ or if $M$ is a direct summand of $I/\emph{soc}\, I$ then $M$ is not a predecessor of $I$.
\end{enumerate}
\end{lema}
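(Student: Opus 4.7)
The plan is to prove each statement by contradiction, exploiting the acyclicity of $\Gamma_A$. Part (2) is formally dual to part (1): by passing to the opposite category $(\mbox{mod}\,A)^{\mbox{op}}$, indecomposable projectives correspond to indecomposable injectives, $\mbox{rad}\,P$ corresponds to $I/\mbox{soc}\,I$, successors to predecessors, and $\tau$ to $\tau^{-1}$, so it suffices to focus on part (1).

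For the subcase in which $M$ is a direct summand of $\mbox{rad}\,P$, the canonical inclusion $M\hookrightarrow P$ is an irreducible morphism and therefore gives an arrow $M\to P$ in $\Gamma_A$. If $M$ were a successor of $P$, there would be a directed path $P\caminod M$ in $\Gamma_A$, and concatenating it with the arrow $M\to P$ would produce a directed cycle $P\caminod M\to P$ in $\Gamma_A$, contradicting the hypothesis that the component is without cycles.

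For the remaining subcase, assume there is an irreducible morphism $R\to M$ with $R$ a direct summand of $\mbox{rad}\,P$ and $M\not\simeq P$, and suppose for contradiction that there is a directed path $P=Y_0\to Y_1\to\cdots\to Y_s=M$ in $\Gamma_A$ with $s\geq 1$. Since $R$ is a summand of $\mbox{rad}\,P$, the inclusion $R\hookrightarrow P$ yields the arrow $R\to P$, and concatenating gives a path $R\to P\to Y_1\to\cdots\to Y_s=M$ of length $s+1\geq 2$, parallel to the irreducible arrow $R\to M$; in other words, the arrow $R\to M$ admits a bypass in $\Gamma_A$. The main step is to upgrade this bypass to a directed cycle: using the Auslander-Reiten sequence $0\to R\to E\to \tau^{-1}R\to 0$ when $R$ is not injective (or the decomposition of $R/\mbox{soc}\,R$ when $R$ is injective), both $P$ and $M$ appear as summands of $E$, so there are arrows $P\to \tau^{-1}R$ and $M\to \tau^{-1}R$ in $\Gamma_A$. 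Iterating the construction one step along the path and one $\tau^{-1}$-translation at a time, the finiteness of $\mbox{ind}\,A$ eventually forces two distinct vertices of the propagated bypass to coincide, producing the required directed cycle in $\Gamma_A$ and hence a contradiction.

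The main obstacle is carrying out this iteration rigorously, especially handling the moment when an injective module is encountered (where $\tau^{-1}$ becomes undefined) and bounding the length of the propagated bypass along the path $P\caminod M$; once the inductive argument is complete, part (2) follows immediately by the duality indicated at the outset.
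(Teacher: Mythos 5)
Your reduction of part (2) to part (1) by duality and your treatment of the subcase where $M$ is itself a direct summand of $\mbox{rad}\,P$ (an arrow $M\to P$ plus a path $P\rightsquigarrow M$ would close a directed cycle) are both fine and coincide with what the paper does. The problem is the main subcase. You correctly identify that the path $R\to P\to Y_1\to\cdots\to Y_s=M$ is a bypass of the arrow $R\to M$, but at that point you do not actually derive a contradiction: you propose to ``upgrade this bypass to a directed cycle'' by pushing the configuration forward with $\tau^{-1}$ and appealing to finiteness of $\mbox{ind}\,A$, and you yourself flag that carrying out this iteration (termination, the injective case where $\tau^{-1}$ is undefined, and control of the length of the propagated bypass) is the unresolved obstacle. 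That is precisely the content that is missing. Moreover, the final step of your sketch does not follow as stated: two modules in the propagated configuration coinciding gives two parallel paths meeting at a vertex, not a directed cycle in $\Gamma_A$, so even granting the iteration you would not immediately reach the desired contradiction with acyclicity.

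The paper closes exactly this gap by citation rather than by a direct argument: it invokes the theorem of Crawley--Boevey, Happel and Ringel that in a component of $\Gamma_A$ without oriented cycles every bypass of an arrow is sectional, together with their result that over a representation-finite algebra no arrow of $\Gamma_A$ admits a sectional bypass. These two facts together rule out the existence of any bypass of $R\to M$, which is the contradiction. Your proposal is essentially an attempt to reprove that nontrivial external theorem from scratch, and as written it does not succeed; either complete a genuine inductive argument (which would amount to reproving the sectionality of bypasses and the nonexistence of sectional bypasses) or quote the known result.
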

\begin{proof} We only prove (1)  since (2) follows similarly.
Assume there is an irreducible morphism from  $R_1 \rightarrow M$, where $R_1$ is a direct summand $\mbox{rad}\, P$ and $M$ is a successor of $P$.  Then, there is a path of irreducible morphism from $P$ to  $M$, and therefore from $R_1$ to $M$ let say $R_1 \rightarrow P \rightarrow Y_1 \rightarrow \dots \rightarrow Y_n  \rightarrow M$ .  Moreover, that path is a bypass of the arrow $R_1 \rightarrow M$,  since by hypothesis  $M \not \simeq P$ and  $Y_n \not \simeq P$
because $\Gamma_A$ is directed.
By \cite{CHR}, Crawley-Boevey, Happel and Ringel proved that any bypass of an arrow in a component without oriented cycles of $\Gamma_A$ is sectional.

On the other hand, the mentioned authors proved that no arrow in $\Gamma_A$ allows any sectional bypass in case $A$ is representation-finite. Hence, $M$ is not a successor of $P$.

Now, if $M= R_1$  and we assume that  $M$ is a successor of $P$ then we get to the contradiction that there is cycle in $\Gamma_A$. Hence, $M$ is not a successor of $P$.
\end{proof}

\begin{lema}\label{9}
Let $A$ be a representation-finite tree algebra with only one zero-relation $\alpha_m \dots \alpha_1$, where $s(\alpha_1)=a$, $e(\alpha_m)= b$ and  $m \geq 2$. Let  $M$ be an indecomposable  $A_1 \cap A_2$-module.
\begin{enumerate}
\item If there is a sectional path in $\Gamma_A$ from a direct summand of $\emph{rad}\, P_a$ to $M$ then $M \notin \mathcal{C}$.
\item If there is a sectional path in $\Gamma_A$ from $M$ to a direct summand of $I_b/\emph{soc}\, I_b$ then $M \notin \mathcal{A}$.
\end{enumerate}
\end{lema}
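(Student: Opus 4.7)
I will prove part (1); part (2) follows by a dual argument in which $I_b$ and $\mathcal{A}$ replace $P_a$ and $\mathcal{C}$, one-point coextensions replace one-point extensions, and Lemma \ref{bypass}(2) replaces Lemma \ref{bypass}(1). I argue by contradiction. Suppose we have a sectional path $\sigma : R = X_0 \to X_1 \to \cdots \to X_n = M$ with $R$ a direct summand of $\mbox{rad}\,P_a$, and suppose moreover that $M \in \mathcal{C}$. Then there exist a vertex $i \in (Q_{A_1})_0 \setminus (Q_{A_1\cap A_2})_0$ and a path of irreducible morphisms $\pi : P_i = Y_0 \to Y_1 \to \cdots \to Y_k = M$ in $\Gamma_A$. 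Since $R$ is a summand of $\mbox{rad}\,P_a$, there is also an irreducible morphism $R \to P_a$ in $\Gamma_A$, so $R$ is an immediate $\Gamma_A$-neighbour of $P_a$.

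The first step is to use Lemma \ref{bypass}(1) to show that $M$ is not a successor of $P_a$. Applied to $P = P_a$, Lemma \ref{bypass}(1) immediately gives that $R$ itself and $X_1$ are not successors of $P_a$. Using the sectionality hypothesis $X_{j+1}\not\simeq \tau X_{j-1}$ together with the Crawley--Boevey--Happel--Ringel result (that no arrow of a representation-finite $\Gamma_A$ admits a sectional bypass), one propagates this obstruction inductively to see that for every $j \geq 1$, the module $X_j$ is not a successor of $P_a$, so $M$ is not a successor of $P_a$.

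The second step exploits that $M \in \mbox{ind}\,(A_1\cap A_2)$ has support contained in $(Q_{A_1\cap A_2})_0$. Since $i \notin (Q_{A_1\cap A_2})_0$, we have $\mbox{Hom}_A(P_i,M)=M_i=0$, so the composition of irreducible morphisms along $\pi$ must be zero. I extract the maximal terminal sectional sub-path $Y_j \to Y_{j+1} \to \cdots \to Y_k = M$ of $\pi$: since sectional paths in a component with length yield nonzero compositions, $\mbox{Hom}_A(Y_j, M)\neq 0$, and this together with the convexity of $\mbox{ind}\,(A_1\cap A_2)$ inside $\mbox{ind}\,A$ (as used in the proof of Proposition \ref{omegas}) forces $Y_j$ to lie in $\mbox{ind}\,(A_1\cap A_2)$ and this sectional tail of $\pi$ to live entirely in $\mbox{ind}\,(A_1\cap A_2)$.

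The third step combines the two sectional paths ending at $M$: the given $\sigma$ coming from the ``$a$-side'' and the terminal segment of $\pi$ coming from the ``$b$-side''. Because $\Gamma_A$ is a component with length, these two paths can be extended, using the irreducible arrow $R \to P_a$ and the non-sectional initial portion $P_i = Y_0 \to \cdots \to Y_j$, into two paths linking $P_i$ to $P_a$ whose lengths differ by a controlled amount depending on the non-sectional defect of $\pi$; this contradicts the with-length property combined with the fact that in a tree algebra with a single zero-relation the only connection between the $a$-side and the $b$-side of $Q_A$ is the path $\alpha_m\cdots\alpha_1$, whose contribution to lengths in $\Gamma_A$ is rigidly determined. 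The main obstacle is this last length-comparison: one has to quantify precisely how much the non-sectional portion of $\pi$ shortens compositions, and reconcile it with the rigidity imposed by having parallel paths of the same length in $\Gamma_A$.
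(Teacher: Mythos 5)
Your proposal does not close the argument. You yourself flag the decisive step as ``the main obstacle'': the length comparison in your third step, which is supposed to reconcile the sectional path from the radical summand with the non-sectional path from $P_i$, is never carried out, and that is exactly where the content of the lemma lies. The earlier steps also contain unsupported claims. In Step 1, the Crawley-Boevey--Happel--Ringel theorem concerns sectional bypasses \emph{of an arrow}: it applies to $R$ and to $X_1$ (via Lemma \ref{bypass}), but for $j\geq 2$ there is no arrow from a summand of $\mbox{rad}\,P_a$ to $X_j$ to be bypassed, so ``propagating the obstruction inductively'' is an assertion, not a proof. In Step 2, the implication ``$\mbox{Hom}_A(Y_j,M)\neq 0$ with $M\in\mbox{ind}\,(A_1\cap A_2)$ forces $Y_j\in\mbox{ind}\,(A_1\cap A_2)$'' is false in general: an $A_1$-module supported outside $Q_{A_1\cap A_2}$ can perfectly well admit a nonzero morphism to an $A_1\cap A_2$-module (through a quotient supported on the intersection), and the convexity used in Proposition \ref{omegas} is convexity with respect to paths in $\mbox{ind}\,A$, not a statement about nonvanishing Hom.

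The paper's proof rests on an ingredient that is entirely absent from your attempt: the orbit graph. Assuming $M$ is a successor of $P_a$, one observes that $P_a$ cannot lie on the given sectional path (since $\mbox{Hom}_A(P_a,M)=0$, $M$ being an $A_1\cap A_2$-module), so there are two parallel paths from the radical summand $R_1$ to $M$ --- the sectional one, and the one factoring through $P_a$ --- and by the with-length property they have the same length $n$. Passing to the orbit graph $O(\Gamma_A)$, these two paths induce two distinct walks between $[R_1]$ and $[M]$ (one passes through $[P_a]$, the other through $[X_1],\dots,[X_{n-1}]$, and they cannot coincide), producing a cycle in $O(\Gamma_A)$. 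This contradicts the fact that for a representation-finite tree algebra the orbit graph is of tree type (\cite[Lemma 4.3 and Lemma 4.8, Chapter IX]{ASS}), whence $\Gamma_A$ is simply connected. The small cases $n=0,1$ are disposed of directly by Lemma \ref{bypass}. Without invoking the tree-type of the orbit graph (or some equivalent rigidity statement), the length bookkeeping you propose in Step 3 has no contradiction to run into, so the gap is structural rather than cosmetic.
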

\begin{proof} We only prove (1) since (2) follows with similar arguments. Let $\oplus_{i=1}^{r} R_i$ be the decomposition of $\mbox{rad}\, P_a$ in indecomposable direct summands. Without loss of generality, we may assume that the given sectional path goes from $R_1$ to $M$. Consider $n$ to be the length of that path.

Assume that $M$  is a successor of $P_a$. Hence there is a path of irreducible morphisms between indecomposable modules from $P_a$ to $M$.
We analyze the length of the given sectional path.
If $n=0$ or $n=1$ we get a contradiction to Lemma \ref{bypass}.

Consider $n > 1$. Observe that $P_a$ does not belong to the given sectional path, since $\mbox{Hom}_A(P_a, M) \, =0$. %because $M \in A_1 \cap A_2$ and $P_a \in A_1$.
By our assumption that $M$  is a successor of $P_a$, then there are at least two paths of irreducible morphism from $R_1$   to  $M$, the sectional one and another which we illustrate below:

\begin{equation}\label{orbitgraph1}
 \xymatrix @!0 @R=0.7cm  @C=0.9cm  {&  & &  & P_a \ar@{~>}[rddddd] & & & &\\
  &R_1  \ar[rd]\ar[rrru]  & & & && &  \\
   &  & X_{1}  \ar[rd] &  & &   &\\
   &  &  & \ar@{.}[rd]   & & &  &\\
   &  &   &   &X_{n-1}\ar[rd] & & &\\
   &  &  &  & & M & &}
\end{equation}

\noindent both of the same length. We analyze the orbit graph of the subquiver in  (\ref{orbitgraph1}) (see,  \ref{orbit}). We have the following subgraph

$$\label{orbitgraph2}
 \xymatrix @!0 @R=0.7cm  @C=1.0cm  {&  & [P_a]  &  & & & &\\
  &[R_1] \ar@^{-}[rd] \ar@^{-}[ru] & &  & & &&  \\
   &  & [X_{1}]  \ar@^{-}[rd] &  & &   &\\
   &  &  & \ar@{.}[rd]   & & &  &\\
   &  &   &   &[X_{n-1}] \ar@^{-}[rd] & & &\\
   &  &  &  & &[M] & & }$$

Consider the path from $R_1$ to $M$ going through $P_a$ as follows: $R_1 \rightarrow P_a \rightarrow Y_1 \rightarrow \dots \rightarrow Y_{n-2}  \rightarrow  M$. Note that we have exactly $n-3$ vertices that we have to consider in the orbit graph. The walk between the vertices $[X_{1}], \dots,   [X_{n-1}]$  can not  coincide with the one of the vertices $[Y_{1}], \dots,   [Y_{n-2}]$ since the number of vertices is different.  Therefore, we get that the orbit graph is not of tree-type, a contradiction to \cite[Lemma 4.3 and Lemma 4.8, Chapter IX]{ASS}, since $A$ is a tree algebra. Hence, $M \notin \mathcal{C}$.
\end{proof}

Our next result is fundamental to get Theorem \ref{m-relations}.

\begin{prop}\label{4}
Let $A$ be a representation-finite tree algebra with only one zero-relation
$\alpha_m \dots \alpha_1$, where $s(\alpha_1)=a$, $e(\alpha_m)= b$ and  $m \geq 2$.
If there is a sectional path from a direct summand of $\emph{rad}\, P_a$ to a direct summand
of $I_b/\emph{soc}\, I_b$ then $\mathcal{A}\cap \mathcal{C}= \emptyset$.
\end{prop}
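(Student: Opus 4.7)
The plan is to argue by contradiction: suppose there is an indecomposable $A$-module $M \in \mathcal{A} \cap \mathcal{C}$. Then there exist a direct summand $P_c$ of $C'$ and a direct summand $I_d$ of $DA'$ together with paths of non-zero morphisms $P_c \rightsquigarrow M$ and $M \rightsquigarrow I_d$ in $\mbox{ind}\,A$. Denote the given sectional path by $\phi \colon R = Z_0 \to Z_1 \to \dots \to Z_n = J$, where $R$ is a summand of $\mbox{rad}\,P_a$ and $J$ is a summand of $I_b/\mbox{soc}\,I_b$.

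First, I would show that without loss of generality $M$ may be assumed to lie in $\mbox{ind}(A_1\cap A_2)$. The idea is that \cite[Lemma 5]{CW} together with the tree structure of $Q_A$ forces any module simultaneously accessible from a summand of $C'$ and reaching a summand of $DA'$ to have its support contained in $(Q_{A_1\cap A_2})_0$: the only bridge between the $a$-side of $Q_A$ (carrying the injective summands of $DA'$) and the $b$-side (carrying the projective summands of $C'$) is the zero-relation path $\alpha_m\dots\alpha_1$, which contributes nothing to $\mbox{Hom}_A$ after passing to the quotient.

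Next, the plan is to exhibit either a sectional path from $R$ to $M$ or a sectional path from $M$ to $J$ in $\Gamma_A$. Once this is done, Lemma \ref{9} (or its dual) yields respectively $M\notin \mathcal{C}$ or $M\notin \mathcal{A}$, contradicting $M\in \mathcal{A}\cap\mathcal{C}$. To produce one of these sectional paths I would combine the backbone $\phi$ with a path of irreducible morphisms between $M$ and a suitably chosen $Z_i$, and then iteratively remove non-sectional steps using Lemma \ref{bypass}, which forbids sectional bypasses of arrows in $\Gamma_A$ for representation-finite algebras. Since $A$ is a tree algebra, the orbit graph of $\Gamma_A$ is a tree (see \ref{orbit}), so such rerouting is controlled by the combinatorics of a tree and eventually stabilizes.

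The main obstacle is this last rerouting step: one must verify that the sectional refinements really can be built without destroying the prescribed endpoints at $R$, at $M$, or at $J$. I expect this to follow from a case analysis on the relative position of $M$ and $\phi$ inside the $\tau$-orbits of $\Gamma_A$, exploiting that $\phi$ itself is sectional, so no intermediate module $Z_i$ coincides with a $\tau$-translate of its neighbours on $\phi$; combined with the tree shape of the orbit graph, this rigidity rules out the configurations in which a sectional continuation could fail to exist.
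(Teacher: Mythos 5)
Your strategy is genuinely different from the paper's, but its central step is a real gap: you need to produce a sectional path in $\Gamma_A$ from $R$ to $M$ or from $M$ to $J$, and the tools you invoke cannot deliver it. Lemma \ref{bypass} (via Crawley-Boevey--Happel--Ringel \cite{CHR}) concerns only bypasses of a \emph{single arrow}: a path of length at least two parallel to an arrow would be a sectional bypass, which cannot exist in the representation-finite case. It gives no mechanism for ``iteratively removing non-sectional steps'' from a longer path; there is no general procedure that converts a path of irreducible morphisms into a sectional path with the same endpoints, and such a sectional path typically fails to exist (a hook $X_{i-1}\to X_i\to \tau^{-1}X_{i-1}$ cannot be straightened). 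Moreover, the statement you would need --- ``if $M\in\mathcal{A}\cap\mathcal{C}$ then $M$ is the target (or source) of a sectional path from $R$ (to $J$)'' --- is vacuous once the proposition is proved, so it can only be established by an argument that already excludes $M$; your sketch supplies no such independent input, and the appeal to ``rigidity'' of the tree-shaped orbit graph is exactly the point that needs proof.

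The paper's proof sidesteps this construction entirely. Since $\mbox{Hom}_A(P_a,J_1)=0$ and $\mbox{Hom}_A(R_1,I_b)=0$, neither $P_a$ nor $I_b$ lies on the given sectional path $R_1\to X_1\to\cdots\to X_{n-1}\to J_1$. If some $M\in\mathcal{A}\cap\mathcal{C}$ existed, one would obtain a second path $R_1\to P_a\rightsquigarrow M\rightsquigarrow I_b\to J_1$ parallel to the sectional one; because $\Gamma_A$ is a component with length, this path also has length $n$, yet the two walks it and the sectional path induce between $[R_1]$ and $[J_1]$ in the orbit graph cannot coincide (they pass through different sets of $\tau$-orbits, $[P_a]$ and $[I_b]$ not being orbits of the $X_i$), producing a cycle in the orbit graph and contradicting the fact that the orbit graph of a representation-finite tree algebra is a tree. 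Note that your sketch never uses the ``component with length'' property, which is precisely the ingredient that pins down the length of the second path and makes the orbit-graph contradiction work; any correct proof along these lines will need it.
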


\begin{proof}
Let $R_1$ and $J_1$ be direct summands of $\mbox{rad}\,P_a$ and $I_b/\mbox{soc}\, I_b$, respectively.
Assume that there is a sectional path $R_1\fle X_1\fle \cdots \fle X_{n-1}\fle J_1$ of length $n$.
Since $\mbox{Hom}_A(P_a, J_1) \, =0$ and $\mbox{Hom}_A(R_1, I_b) \, =0$ we infer that the modules $P_a$ and $I_b$
do not belong to the sectional path. Analyzing the orbit graph of $\Gamma_A$ we have the following subgraph:

$$\label{orbitgraph3}
 \xymatrix @!0 @R=0.7cm  @C=1.0cm  {&  & [P_a]  &  & & & &\\
  &[R_1] \ar@^{-}[rd] \ar@^{-}[ru] & &  & & &&  \\
   &  & [X_{1}]  \ar@^{-}[rd] &  & &   &\\
   &  &  & \ar@{.}[rd]   & & &  &\\
   &  &   &   &[X_{n-1}] \ar@^{-}[rd] & & &\\
   &  &  &  & &[J_1] & & \\
    &  &   &   &[I_b] \ar@^{-}[ru] & & &}$$

Suppose  that $\mathcal{A} \cap \mathcal{C} \neq \emptyset$. Then there exists an indecomposable module $M$ and
paths $P_a\rightsquigarrow M$ and  $M\rightsquigarrow I_b$. Since $\Gamma_A$ is a component with length, then $\ell(P_a\rightsquigarrow M\rightsquigarrow I_b)=n-2$. With a similar argument as in the proof of Lemma \ref{9}, we conclude that the orbit graph of $\Gamma_A$ is not of tree type, a contradiction to \cite[Lemma 4.3 and Lemma 4.8, Chapter IX]{ASS}, since $A$
is a tree algebra. Therefore, $\mathcal{A}\cap \mathcal{C}= \emptyset$.
\end{proof}

Now, we prove that the possible algebras $A_1\cap A_2$ are hereditary algebras of type $A_n$, $D_n$ or $E_6$. We shall consider them to find different monomial tree algebras with $\mathcal{B} \neq  \emptyset$.

\begin{lema}\label{2}
Let $A$ be a representation-finite tree algebra with only one zero-relation, $a\stackrel{\alpha_1}\rightarrow a_1 \stackrel{\alpha_2}\rightarrow a_2 \rightarrow \dots \stackrel{\alpha_{m-1}}\rightarrow a_{m-1} \stackrel{\alpha_m}\rightarrow b$, with $m \geq 2$. Let $A_1$ and $A_2$ be algebras build as we explained above. The following conditions hold.
\begin{enumerate}
\item If $A_1\cap A_2$ is not of type $A_n$ then the vertices $a_1$ and $a_m$ in $Q_{A_1\cap A_2}$  are the starting and the ending points of only one arrow, respectively.
\item The algebra $A_1\cap A_2$ is neither of type $E_7$ nor of type $E_8$.
\item The algebra $A_1\cap A_2$ is not of type $D_n$ with $n\geq 8$ and $m \geq 3$.
\end{enumerate}
\end{lema}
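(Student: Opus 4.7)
The overall strategy for all three statements is the following observation: representation-finiteness of $A$ forces representation-finiteness of the hereditary quotients $A_1\simeq A/\langle e_a\rangle$ and $A_2\simeq A/\langle e_b\rangle$, and hence also of $A_1\cap A_2 \simeq A_1/\langle e_b\rangle$. Being hereditary, each of these must therefore have Dynkin underlying graph. The statements then reduce to combinatorial facts about Dynkin trees, using that $Q_{A_2}$ is obtained from $Q_{A_1\cap A_2}$ by attaching the leaf $a$ at $a_1$ via $\alpha_1$, and symmetrically $Q_{A_1}$ is obtained by attaching the leaf $b$ at $a_{m-1}$ via $\alpha_m$ (I read the vertex called $a_m$ in the statement as $a_{m-1}$, the last internal vertex of the zero-relation).

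For (1) I argue by contradiction. Suppose $A_1\cap A_2$ is not of type $A_n$, so its underlying graph has a unique fork, and suppose $a_1$ admits a second outgoing arrow $\beta$ in $Q_{A_1\cap A_2}$. If $a_1$ itself is the fork of $Q_{A_1\cap A_2}$, then in $Q_{A_2}$ the vertex $a_1$ acquires valence $\geq 4$ via $\alpha_1$, and no Dynkin diagram contains a vertex of valence $\geq 4$, contradicting rep-finiteness of $A_2$. Otherwise, $a_1$ has valence exactly $2$ in $Q_{A_1\cap A_2}$ (from $\alpha_2$ and $\beta$), the fork lies at a different vertex, and adjoining $\alpha_1$ creates a second fork at $a_1$ in $Q_{A_2}$; but a Dynkin diagram has at most one vertex of valence $\geq 3$, again contradicting rep-finiteness of $A_2$. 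The claim for $a_{m-1}$ follows by the symmetric argument with $A_1$, $\alpha_m$ in place of $A_2$, $\alpha_1$.

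For (2), assume $A_1\cap A_2$ has type $E_7$ or $E_8$. By (1) both $a_1$ and $a_{m-1}$ are leaves of $Q_{A_1\cap A_2}$. If the type is $E_8$, adjoining any leaf produces a non-Dynkin diagram, contradicting rep-finiteness of $A_1$. If the type is $E_7$, the only leaf whose single-vertex extension gives a Dynkin graph (namely $E_8$) is the tip of the length-$3$ branch; applying the same requirement to $A_2$ forces $a_1$ to be that same tip, so $a_1=a_{m-1}$ and $m=2$. Then $Q_A$ is $E_7$ with two extra leaves attached at the same vertex, which has two forks, and the single zero-relation $\alpha_2\alpha_1$ touches only one internal vertex, leaving a large non-Dynkin subquiver on which one constructs an infinite family of pairwise non-isomorphic indecomposables. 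An analogous argument handles (3): by (1), $a_1$ and $a_{m-1}$ are two distinct leaves of $D_n$, so the path between them in $D_n$ has length either $2$ or $n-2$; adjoining $a$ and $b$ gives an underlying graph of shape $(2,2,n-2)$ or $(1,2,n-1)$, and for $n\geq 8$ neither is Dynkin, while the single zero-relation along that path is too short relative to the remainder of $Q_A$ to restore rep-finiteness.

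The main obstacle I foresee is the closing step in (2) and (3): once the underlying graph of $Q_A$ is shown to be non-Dynkin, one still has to rule out the possibility that the single zero-relation cuts the algebra down to representation-finite type. I plan to handle this either by exhibiting an explicit one-parameter family of pairwise non-isomorphic indecomposable $A$-modules supported on a subquiver of $Q_A$ disjoint from the relation path, or by a Tits-form computation in the spirit of Bongartz for tree algebras with a single zero-relation. The combinatorial part of (1), and the reductions in (2) and (3) to the non-Dynkin picture, will be short and self-contained once the Dynkin constraint on $A_1$ and $A_2$ is in hand.
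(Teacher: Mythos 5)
Your reduction to Dynkin combinatorics is the same one the paper uses, and it carries part (1), the $E_8$ half of (2), and part (3) essentially as the paper does: in each of those cases a contradiction already appears at the level of the underlying graphs of the \emph{hereditary} algebras $A_1$, $A_2$ (attaching a pendant vertex to a Dynkin tree at the forced leaf produces a subgraph of type $\widetilde{D}_n$, $\widetilde{E}_6$, $\widetilde{E}_7$ or $\widetilde{E}_8$), so the zero-relation never has to be discussed. In particular, in (3) your closing clause about the relation being ``too short'' is unnecessary: since $m\geq 3$ forces $a_1\neq a_{m-1}$ and both are leaves of $D_n$, at least one of them is the tip of a short arm, and already the hereditary algebra $A_1$ or $A_2$ contains $\widetilde{E}_8$ when $n\geq 8$; you should contradict representation-finiteness of that hereditary piece rather than of the bound quiver algebra $A$.

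The genuine gap is the $E_7$, $m=2$ sub-case of (2), which you correctly isolate but do not close, and your plan (a) for closing it provably cannot work. When both pendants $a$ and $b$ are attached at the tip $y$ of the length-three arm of $E_7$, both $A_1$ and $A_2$ are of type $E_8$, and every full subquiver of $Q_A$ avoiding the relation --- that is, omitting $a$ or omitting $b$ --- is of type $E_8$ or smaller, hence representation-finite. There is no non-Dynkin subquiver disjoint from the relation path, and any infinite family of indecomposables must be supported on all nine vertices, including both endpoints of the relation. Your plan (b) does succeed: for the Tits form $q(d)=\sum_i d_i^2-\sum_{\alpha}d_{s(\alpha)}d_{e(\alpha)}+d_ad_b$ the positive vector with value $1$ at $a$ and $b$, $3$ at $y$ and at the short-arm tip, $4,5,6,4,2$ at the remaining vertices (explicitly $(2,4,3,6,5,4,3,1,1)$ reading along the long path, the short arm, and then $a,b$) is isotropic, so $q$ is not weakly positive and $A$ is representation-infinite by Bongartz's criterion for tree algebras; the computation rests on the fact that the diagonal entry of the inverse Cartan matrix of $E_7$ at the end of the long arm is $3/2$. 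The paper instead disposes of this case by quoting the Bongartz--Ringel list of representation-infinite tree algebras. As written, your proposal identifies the right reduction everywhere but leaves unproved the one step that genuinely concerns the bound quiver rather than its hereditary quotients, and its first fallback for that step is a dead end.
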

\begin{proof}
(1) Assume that the vertex $a_1$ in $Q_{A_1\cap A_2}$ has more than one arrow. Then there is a vertex $c$ and an edge with end points $c$ and $a_1$. Since $A_1\cap A_2$ is not of type $A_n$, then there is a vertex $x$ having three edges. Hence, in the extension $(A_1\cap A_2)[P_{a_1}]$ we have a subquiver of type $\widetilde{D}_{n}$ as follows
 \[{\xymatrix{&c\ar@^{-}[d]&&&&\\
      a\ar[r]&a_1\ar[r]&a_2\ar@^{.}[r]&\ar[r]&x\ar[r]\ar@^{-}[u]&}}\]

\noindent contradicting the fact that $A$ is representation-finite. Similarly, we can prove that $a_m$ has only one arrow in $Q_{A_1\cap A_2}$.

(2) Assume that the algebra $A_1\cap A_2$ is of type $E_8$. Then we have the following situation

 \[{\xymatrix{&&c\ar@^{-}[d]&&&&\\
  x\ar@^{-}[r]&\bullet\ar@^{-}[r]&\bullet\ar@^{-}[r]&\bullet\ar@^{-}[r]&\bullet\ar@^{-}[r]&\bullet\ar@^{-}[r]&y}.}\]

By Statement (1), to get the algebras $A_1$ and $A_2$ we should extend or coextend in the vertices $x$, $y$ or $c$. If we add an edge in the vertex $x$, then the quiver will contain a subquiver of type $\widetilde{E}_{7}$. Now, if we add an edge in $y$, then the quiver  contains a subquiver of type $\widetilde{E}_{8}$. Finally, if we add an edge in the vertex $c$, then the obtained quiver has a subquiver of type $\widetilde{E}_{6}$. Hence, in none of the cases, we can do a extension or a coextension to obtain a representation-finite algebra.

Now, assume that $A_1\cap A_2$ is of type $E_7$. Then, we have a diagram as follows

\[{\xymatrix{&&c\ar@^{-}[d]&&&\\
  x\ar@^{-}[r]&\bullet\ar@^{-}[r]&\bullet\ar@^{-}[r]&\bullet\ar@^{-}[r]&\bullet\ar@^{-}[r]&y.}}\]

Again, by  Statement (1) to get the algebras $A_1$ and $A_2$ we should extend or coextend in the vertices $x$, $y$ or $c$.  If we add an edge in the vertex $x$, then such a quiver will contain a subquiver of type $\widetilde{E}_7$, which is a contradiction to the fact of being representation-finite. Now, if we add an edge in the vertex $c$, then the quiver will contain a subquiver of type $\widetilde{E}_6$, getting the same contradiction as above. Finally, the last option is to add an edge in $y$. If $m \geq 3$, then it is not possible to get $A_1$ and $A_2$ both by a one-point extension of a vertex. In case $m = 2$ and if we extend or coextend in $y$ we obtain a graph of the list stated in \cite[Corollary 1]{BR}, which is representation-infinite.
Hence, we conclude that $A_1\cap A_2$ can not be of type $E_7$.

(3) Let us  prove that $A_1\cap A_2$ is not of type $D_n$ with $n\geq 8$ and $m \geq 3$. In fact, assume that $A_1\cap A_2$ is of type $D_8$. Then we have a diagram as follows

\[{\xymatrix{&c\ar@^{-}[d]&&&&&\\
  x\ar@^{-}[r]&\bullet\ar@^{-}[r]&\bullet\ar@^{-}[r]&\bullet\ar@^{-}[r]&\bullet\ar@^{-}[r]&\bullet\ar@^{-}[r]&y.}}\]

By Statement (1), to get the algebras $A_1$ and $A_2$ we have to extend or coextend in the vertices $x$, $y$ or $c$. Necessarily, we have to  extend in $x$ or in $c$. Either if we add an edge in $x$ or in $c$ we obtain a subquiver of type $\widetilde{E}_{8}$, getting a contradiction to the assumption that $A$ is representation-finite.
\end{proof}

\begin{obs} \emph {Note that if the first step to get $A_1$ from $A_1\cap A_2$ is to extend $A_1\cap A_2$ by a module $X$ then the first step to get $A$ from $A_2$ is to extend $A_2$ by the same module $X$.}

\emph {On the other hand, if the first step to obtain $A$ from $A_1$ is a coextension by a module $Y$ then the first step to get $A$ from $A_2$ is a coextension by a module $Y$.}
\end{obs}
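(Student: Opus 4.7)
The plan is to invoke the pullback description of $A$ recalled earlier: $Q_A$ is the pushout of the inclusions $Q_{A_1\cap A_2} \hookrightarrow Q_{A_1}$ and $Q_{A_1\cap A_2} \hookrightarrow Q_{A_2}$, and $I_A$ is generated by $I_{A_1}$, $I_{A_2}$, together with the paths linking the disjoint vertex sets $V_1 := (Q_{A_1})_0 \setminus (Q_{A_1\cap A_2})_0$ and $V_2 := (Q_{A_2})_0 \setminus (Q_{A_1\cap A_2})_0$. The linearly oriented hypothesis will guarantee that no oriented path connects $V_1$ and $V_2$, so every arrow incident to a vertex of $V_i$ has its other endpoint in $(Q_{A_1\cap A_2})_0 \cup V_i$. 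This is what makes the parallel constructions from the two sides compatible.

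For the first assertion, I would assume the initial step of the construction $A_1\cap A_2 \to A_1$ is the one-point extension $(A_1\cap A_2)[X]$, which introduces a source vertex $a \in V_1$ with $\mathrm{rad}\,P_a \simeq X$ and with outgoing arrows into $\mathrm{supp}(X)\subset (Q_{A_1\cap A_2})_0$. Since $A_1\cap A_2$ is a common quotient of $A_2$, the module $X$ is naturally an $A_2$-module supported on the same vertices. Attaching $a$ to $Q_{A_2}$ with these very arrows then reproduces, by the pushout description, the quiver of $A_2[X]$, and the relations agree because the only relations of $I_{A_1}$ involving $a$ restrict to paths through $A_1\cap A_2$, hence already belong to $I_{A_2}$. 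Therefore $A_2[X]$ is precisely the first step of the parallel construction $A_2 \to A$.

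The second assertion will follow by the dual argument, using Proposition \ref{dual lema S-S} together with the analogous coextension description of the pullback. Suppose the first step of $A_1 \to A$ is the coextension $[Y]A_1$; by linearity of the pullback, no arrow of $A$ starts in $V_1$ and ends at the new sink $b$, so $b$ receives arrows only from vertices in $(Q_{A_1\cap A_2})_0$. Hence $\mathrm{supp}(Y)$ lies in $(Q_{A_1\cap A_2})_0$, so $Y$ is also an $A_2$-module, the coextension $[Y]A_2$ is well-defined, and by the same pullback compatibility it realizes the first step of the parallel construction from $A_2$.

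The main obstacle will be the bookkeeping of relations: one must verify that the zero-relations in $I_A$ linking $V_1$ and $V_2$ (those added in the pullback beyond $I_{A_1}$ and $I_{A_2}$) do not interfere with the first step. This is exactly where the linearly oriented assumption is essential, since any such crossing relation would have to travel through $A_1\cap A_2$ and therefore already coincide with relations existing on both sides. Once this is settled, the two observations reduce to straightforward quiver bookkeeping inside the pushout.
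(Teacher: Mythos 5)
The paper states this remark as a bare observation with no proof, so there is no official argument to compare against; your pushout/pullback bookkeeping is the natural route, and for the first assertion it essentially works: the vertex $v$ adjoined in the first step of $A_1\cap A_2\rightsquigarrow A_1$ is the same as the one adjoined in the first step of $A_2\rightsquigarrow A$, the pushout creates no arrows between $(Q_{A_1})_0\setminus(Q_{A_1\cap A_2})_0$ and $(Q_{A_2})_0\setminus(Q_{A_1\cap A_2})_0$, and Lemma \ref{proyiny} identifies the relevant modules over $A_1\cap A_2$ and over $A_2$. However, your justification of the key support claim is wrong as stated. You argue that a crossing relation ``would have to travel through $A_1\cap A_2$ and therefore already coincide with relations existing on both sides''; but $A_1$ and $A_2$ are hereditary, so $I_{A_1}=I_{A_2}=0$ and the crossing relations are genuinely new generators of $I_A$ (in the one-relation tree case, $\alpha_m\cdots\alpha_1$ itself is such a linking path, present in neither $I_{A_1}$ nor $I_{A_2}$). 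What you actually need --- and what is true by the pullback description of $I_A$ --- is that \emph{every} path linking $(Q_{A_1})_0\setminus(Q_{A_1\cap A_2})_0$ and $(Q_{A_2})_0\setminus(Q_{A_1\cap A_2})_0$ lies in $I_A$; it is precisely these new relations that truncate $P_v$ so that $\mathrm{rad}\,P_v$, computed in the one-vertex enlargement of $A_2$, is supported on $(Q_{A_1\cap A_2})_0$ and coincides with $X$. Likewise, ``$b$ receives arrows only from $(Q_{A_1\cap A_2})_0$'' does not by itself bound the support of $Y=I_b/\mathrm{soc}\,I_b$, since longer paths could reach $b$ through the middle; again it is the linking relations that do the work.

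Your second paragraph has a more serious problem. The chains $A_1\rightsquigarrow A$ and $A_2\rightsquigarrow A$ adjoin \emph{disjoint} sets of vertices, namely $(Q_{A_2})_0\setminus(Q_{A_1\cap A_2})_0$ and $(Q_{A_1})_0\setminus(Q_{A_1\cap A_2})_0$ respectively, so the new sink $b$ of the coextension $[Y]A_1$ already belongs to $Q_{A_2}$, and ``$[Y]A_2$'' does not realize any step of the construction of $A$ from $A_2$: it would re-adjoin a vertex that is already present. The statement that actually admits the dual of your first argument pairs $A_1\rightsquigarrow A$ with $A_1\cap A_2\rightsquigarrow A_2$ (both adjoin the vertices of $(Q_{A_2})_0\setminus(Q_{A_1\cap A_2})_0$), so the coextension you should be producing is $[Y](A_1\cap A_2)$, with $Y$ an $(A_1\cap A_2)$-module by the corrected support argument above. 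As written, the second half of your proposal does not establish a meaningful claim; you should either fix the pairing of the two chains or flag the mismatch in the statement itself.
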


Now, we concentrate to prove some lemmas necessary to find tree algebras where $\mathcal{B} \neq \emptyset$.

\begin{lema}\label{1} Consider  $A$, $A_1$ and $A_1 \cap A_2$  defined as above.
\begin{enumerate}
\item[(a)] Assume that the first step to obtain $A_1$ from $A_1\cap A_2$ is a extension by an indecomposable  module $X$ and that there is an indecomposable module $M$ and a path of irreducible morphisms between indecomposable $A_1 \cap A_2$-modules  as follows
\begin{equation}\label{camino} Z_1=X \rightarrow Z_{2}\rightarrow \dots \rightarrow Z_{k}=M
\end{equation}
\noindent where the path from $X$ to $M$ is sectional. Then the path in $(\ref{camino})$ is also a path of irreducible morphisms between indecomposable modules in $\emph{mod}\,A_1$, and  moreover it is a  sectional path in  $\emph{mod}\,A_1$.
\item[(b)] Assume that the first step to obtain $A$ from $A_1$ is a co-extension by a module $Y$ and that there is an indecomposable module $M$ and a path of irreducible morphisms between indecomposable $A_1$-modules as follows
\begin{equation}\label{camino2} M \rightarrow U_1 \rightarrow \dots \rightarrow U_s=Y
\end{equation}
\noindent where the path from $M$ to $Y$ is sectional. Then the path in  $(\ref{camino2})$ is also a path of irreducible morphisms between indecomposable $A$-modules, and  moreover it is a sectional path in  $\emph{mod}\,A$.
\end{enumerate}
\end{lema}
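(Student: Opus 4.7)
The plan is to establish both parts by an induction along the given sectional paths, at each step invoking Proposition \ref{lema S-S} in (a) and Proposition \ref{dual lema S-S} in (b) to lift almost split sequences to the enlarged algebra.

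For part (a), I would proceed inductively on $i\geq 1$, showing that the arrow $Z_i\to Z_{i+1}$ is an irreducible morphism between indecomposable modules in $\mbox{mod}\,A_1$ and that the almost split sequence
\[0\to \tau Z_{i+1}\to E_{i+1}\to Z_{i+1}\to 0\]
computed in $\mbox{mod}\,(A_1\cap A_2)$ remains almost split in $\mbox{mod}\,A_1$. Because the morphism $Z_i\to Z_{i+1}$ is irreducible in $\mbox{mod}\,(A_1\cap A_2)$, the module $Z_i$ is a direct summand of $E_{i+1}$. The crucial step is to check that $\mbox{Hom}_{A_1\cap A_2}(X,\tau Z_{i+1})=0$, so that Proposition \ref{lema S-S} applies. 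Since $X=Z_1$ and the path $Z_1\to Z_2\to\cdots\to Z_{i+1}$ is sectional by hypothesis, this Hom-vanishing follows from the standard properties of sectional paths (see, e.g., \cite{IT} and \cite{L}), which forbid a non-zero morphism from the starting module of a sectional path to the AR-translate of its terminal. Granted the vanishing, Proposition \ref{lema S-S} simultaneously yields the irreducibility of $Z_i\to Z_{i+1}$ in $\mbox{mod}\,A_1$ and the identity of the translates $\tau Z_{i+1}$ computed in $\mbox{mod}\,(A_1\cap A_2)$ and in $\mbox{mod}\,A_1$. The latter is precisely what is needed to transport the sectional condition $Z_{j+1}\not\simeq \tau Z_{j-1}$ from $\mbox{mod}\,(A_1\cap A_2)$ to $\mbox{mod}\,A_1$, proving that the path (\ref{camino}) is sectional in $\mbox{mod}\,A_1$.

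Part (b) is handled by the dual argument: one views $A$ as the coextension $[Y]A_1$ and runs the induction along $M\to U_1\to\cdots\to U_s=Y$ in reverse, applying Proposition \ref{dual lema S-S} to each almost split sequence starting at $U_i$. The Hom-vanishing condition to be verified at each stage is now of the form $\mbox{Hom}_{A_1}(\tau^{-1}U_i,Y)=0$, which follows again from the sectionality of the given path via the argument dual to the one used in (a). Irreducibility is then preserved under the passage to $\mbox{mod}\,A$, and the sectional property is transported in the same mechanical way.

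The main technical obstacle in both parts is precisely the Hom-vanishing extracted from sectionality; once this is secured, the transfer of irreducibility and of the sectional condition itself follows directly from the one-point extension/coextension machinery provided by Propositions \ref{lema S-S} and \ref{dual lema S-S}.
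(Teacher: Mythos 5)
Your identification of the key Hom-vanishing is correct and agrees with the paper: sectionality of the path gives $\mbox{Hom}_{A_1\cap A_2}(X,\tau_{A_1\cap A_2}Z_i)=0$, and Proposition \ref{lema S-S} then preserves the relevant almost split sequences, hence both the irreducibility of each arrow and (via the invariance of $\tau$ on the $Z_i$) the sectionality of the path. However, there is a genuine gap: a single application of Proposition \ref{lema S-S} with the module $X$ only lands you in $\mbox{mod}\,\bigl((A_1\cap A_2)[X]\bigr)$, whereas the hypothesis says only that the extension by $X$ is the \emph{first step} in obtaining $A_1$ from $A_1\cap A_2$. In general $A_1$ is reached from $A_1\cap A_2$ by a whole chain of further one-point extensions and coextensions (by the projective, respectively the injective, at each newly adjoined vertex), and your argument never addresses these later stages. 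The paper's proof is an induction along this chain of intermediate algebras $B_1=(A_1\cap A_2)[X],\,B_2,\dots$: at each subsequent stage one must verify a vanishing of the form $\mbox{Hom}_{B_n}(X_n,\tau_{B_n}Z_i)=0$ or $\mbox{Hom}_{B_n}(Z_i,Y_n)=0$, and these are \emph{not} consequences of sectionality; they hold because the $Z_i$ and their translates remain $(A_1\cap A_2)$-modules, hence are not supported at the new vertex, while $X_n$ and $Y_n$ are the projective, respectively injective, at that vertex. As written, your proof covers only the special case $A_1=(A_1\cap A_2)[X]$.

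The same issue affects part (b): $A$ is obtained from $A_1$ by a sequence of one-point coextensions/extensions of which the coextension by $Y$ is merely the first, so the dual argument must likewise be iterated, with the Hom-vanishings at the later stages justified by the support considerations above rather than by sectionality. Your choice of the condition $\mbox{Hom}_{A_1}(\tau^{-1}U_i,Y)=0$ for the first step is the correct dual one, but the iteration is still required to reach $\mbox{mod}\,A$.
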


\begin{proof}
We only prove (a) since (b) follows similarly. By construction  $A_1 \cap A_2$ is a Dynkin algebra.
Since the path from $X$ to $M$ is sectional then  $\mbox{Hom}_{A_1 \cap A_2}(X, \tau_{_{A_1 \cap A_2}} Z_i) = 0$ for $i=1, \dots, k$.
By Proposition \ref{lema S-S}, the path in $(\ref{camino})$  is a path of irreducible morphisms between indecomposable modules in  $(A_1 \cap A_2)[X]$. We denote $(A_1 \cap A_2)[X]$  by $B_1$ and we have two cases to analyze; first if we extend by a new projective $X_{1} = P_X$ and secondly if we coextend by a new injective $Y_1 = I_X$.

In case we extend by the new projective $X_{1} = P_X$ since $\tau_{B_{1}} Z_i \in \mbox{ind}\, (A_1 \cap A_2)$ for $i = 1, \dots, n$, then
$\mbox{Hom}_{B_1}(P_X, \tau_{_{B_1}} Z_i) = 0$ and therefore the path in $(\ref{camino})$ is a path of irreducible morphisms between indecomposable modules in $B_1[X_1]$.

If we coextend by a new injective $Y_{1} = I_X$ since $Z_i \in \mbox{ind}\, (A_1 \cap A_2)$ for $i = 1, \dots, n$, then
$\mbox{Hom}_{B_1}( Z_i, I_X) = 0$. By Proposition \ref{dual lema S-S} the almost split sequence
$$0 \rightarrow \tau_{B_1} Z_i \rightarrow E \rightarrow Z_i \rightarrow 0$$
\noindent in $\mbox{mod}\, B_1$ remains almost split in $\mbox{mod}\, [Y_1]B_1$.
Therefore $(\ref{camino})$ is a path of irreducible morphisms between indecomposable modules in $[Y_1]B_1$. We denote $[Y_1]B_1$ by $B_2$.

Assume that we iterate this process $n$ times (extending and/or coextending by an indecomposable new projective or by an indecomposable new  injective $A$-module, respectively) and that the path in $(\ref{camino})$  is a path of irreducible morphisms between indecomposable modules in $B_n$. By construction we have that $\tau_{B_{n}} Z_i = \tau_{A_1 \cap A_2} Z_i$ for $i = 1, \dots, k$.
Now, since in the $n+1$-step we extend by a new projective $X_n=P_{X_n-1}$ or we coextend by a new injective $Y_n=I_{X_{n-1}}$, and since $Z_i$ and $\tau_{B_n}Z_i$ are in $\mbox{mod}\, (A_1 \cap A_2)$ then $\mbox{Hom}_{B_n}(X_n,\tau_{B_n}Z_i)=\mbox{Hom}_{B_n}(Z_i,Y_n)=0$. In any case we have that  $(\ref{camino})$ is a path of irreducible morphisms in  $B_{n+1}$, which is the extension or the coextension of $B_n$.
Hence, the path in $(\ref{camino})$  is a path of irreducible morphisms between indecomposable modules in $A_1$.
\end{proof}

\begin{lema} \label{10} Consider  $A$, $A_2$ and $A_1 \cap A_2$  defined as above.
\begin{enumerate}
\item[(a)] Assume that the first step to obtain $A_2$ from $A_1\cap A_2$ is a co-extension by a module $Y$ and that there is an indecomposable module $M$ and a path of irreducible morphisms between indecomposable $A_1\cap A_2$-modules as follows
\begin{equation}\label{camino3} M \rightarrow U_1 \rightarrow \dots \rightarrow U_s=Y
\end{equation}
\noindent where the path from $M$ to $Y$ is sectional. Then the path in $(\ref{camino3})$ is a path of irreducible morphisms between indecomposable $A_2$-modules, and  moreover it is also a sectional path in  $\emph{mod}\,A_2$.
\item[(b)] Assume that the first step to obtain $A$ from $ A_2$ is a extension by an indecomposable  module $X$ and that there is an indecomposable module $M$ and a path of irreducible morphisms between indecomposable $A_2$-modules as follows
\begin{equation}\label{camino4} Z_1=X \rightarrow Z_{2}\rightarrow \dots \rightarrow Z_{k}=M
\end{equation}
\noindent where the path from $X$ to $M$ is sectional. Then the path in $(\ref{camino4})$ is a path of irreducible morphisms between indecomposable $A$-modules, and  moreover it is a sectional path in  $\emph{mod}\,A$.
\end{enumerate}
\end{lema}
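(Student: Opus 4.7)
The statement is the mirror image of Lemma \ref{1}: part (a) interchanges the roles of extension and coextension in Lemma \ref{1}(a), while part (b) is formally identical to Lemma \ref{1}(a) applied one layer higher, with $A_1\cap A_2$ replaced by $A_2$ and $A_1$ replaced by $A$. My plan is to run the same strategy as the proof of Lemma \ref{1}(a), using Proposition \ref{dual lema S-S} where that proof used Proposition \ref{lema S-S}, and vice versa.

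For (a), the sectional path $M=U_0\to U_1\to\dots\to U_s=Y$ in $\mbox{mod}(A_1\cap A_2)$ together with the Dynkin hypothesis on $A_1\cap A_2$ yield the dual sectional-path vanishing
\[
\mbox{Hom}_{A_1\cap A_2}\bigl(\tau^{-1}_{A_1\cap A_2}U_j,\,Y\bigr)=0 \qquad (j=0,1,\dots,s-1),
\]
which is the usual sectional-path identity applied to the opposite algebra. For each such $j$, $U_{j+1}$ is a direct summand of the middle term of the almost split sequence $0\to U_j\to E_j\to \tau^{-1}_{A_1\cap A_2}U_j\to 0$, because the arrow $U_j\to U_{j+1}$ is irreducible. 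By Proposition \ref{dual lema S-S}, this sequence remains almost split in $B_1:=[Y](A_1\cap A_2)$, so each arrow $U_j\to U_{j+1}$ stays irreducible in $\mbox{mod}\,B_1$. To reach $A_2$ from $B_1$ I would iterate one-point extensions by new projectives and coextensions by new injectives exactly as in Lemma \ref{1}(a); at every such step the added module is supported at a newly introduced vertex, while the modules $U_j$ together with their $\tau$- and $\tau^{-1}$-translates remain in $\mbox{mod}(A_1\cap A_2)$, so the Hom-vanishing hypotheses of Propositions \ref{lema S-S} and \ref{dual lema S-S} hold automatically. Consequently the almost split sequence starting at each $U_j$ remains almost split all the way in $\mbox{mod}\,A_2$, preserving both the irreducibility of every arrow $U_j\to U_{j+1}$ and the identity $\tau_{A_2}U_j=\tau_{A_1\cap A_2}U_j$; the latter transports the condition $U_{j+1}\not\simeq \tau U_{j-1}$ verbatim, so sectionality persists in $\mbox{mod}\,A_2$.

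For (b), the situation is literally that of Lemma \ref{1}(a) with the pair $(A_1\cap A_2,A_1)$ replaced by $(A_2,A)$. Sectionality of $X=Z_1\to\dots\to Z_k=M$ in $\mbox{mod}\,A_2$ gives $\mbox{Hom}_{A_2}(X,\tau_{A_2}Z_i)=0$ for each $i$, so Proposition \ref{lema S-S} preserves every almost split sequence ending at $Z_i$ under the first extension $A_2[X]$; the further one-point steps needed to build $A$ on top of $A_2[X]$ are handled by the same iteration as in Lemma \ref{1}(a).

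The main point that needs care is the dual sectional-path identity $\mbox{Hom}(\tau^{-1}U_j,Y)=0$ used in (a): it is the essential ingredient that matches the asymmetric Hom-vanishing hypothesis of Proposition \ref{dual lema S-S}. Once this is in place, the remainder of the argument is routine bookkeeping through the successive one-point extensions and coextensions, parallel to the proof of Lemma \ref{1}.
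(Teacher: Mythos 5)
Your proposal is correct and matches the paper's intent exactly: the paper states Lemma \ref{10} without proof, treating it as the dual of Lemma \ref{1}, and your argument is precisely that dualization, with the key dual Hom-vanishing $\mbox{Hom}(\tau^{-1}U_j,Y)=0$ from sectionality feeding the hypothesis of Proposition \ref{dual lema S-S}, followed by the same iteration over one-point extensions and coextensions used in the proof of Lemma \ref{1}(a).
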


Now,  we are in position to show some tree algebras where $\mathcal{B} \neq \emptyset$.
We start with the ones where $A_1 \cap A_2$ is a hereditary algebra of type $A_n$, for $n \geq 1$.

\begin{prop} \label{ejemplos} Let $A \simeq kQ_A/I_A$,  where $I_A= <{\alpha_{m}} \dots {\alpha_{1}}>$ with $m \geq 2$ and  $Q_A$ is given by the quiver
\begin{enumerate}
\item
$$\label{bypass0}
 \xymatrix @!0 @R=0.6cm  @C=1.7cm  {
Q_{A'}\ar@^{-}[r] & 1 \ar[r]^{\alpha_1} & 2 \ar[r]^{\alpha_2} & \ar@{.}[r]  & \ar[r]^{\alpha_{m-1}} & m \ar[r]^{\alpha_{m}} & m+1 \ar@^{-}[r]&  Q_{A ''} &
}$$

\noindent where $Q_{A '}$ and $Q_{A''}$ are quivers of any representation-finite hereditary algebras.

\item
$$\label{bypass1}
 \xymatrix @!0 @R=0.8cm  @C=1.7cm  {
Q_{A'}\ar@^{-}[r] & 1 \ar[r]^{\alpha_1} & 2 \ar[r]^{\alpha_2} \ar@^{-}[d] & \ar@{.}[r]  & \ar[r]^{\alpha_{m-1}} & m \ar[r]^{\alpha_{m}} & m+1 \ar@^{-}[r]&  Q_{A ''} &  \\
 &  & Q_{A'''} &  &   & & &}$$

\noindent where, $Q_{A '}$ and $Q_{A''}$ are quivers of any representation-finite hereditary algebras and  $Q_{A '''}$ is the quiver of a hereditary algebra of type $A_n$.

\item
$$\label{bypass2}
 \xymatrix @!0 @R=0.8cm  @C=1.7cm  {
Q_{A'}\ar@^{-}[r] & 1 \ar[r]^{\alpha_1} & 2 \ar[r]^{\alpha_2}  & \ar@{.}[r]  & \ar[r]^{\alpha_{m-1}} & m \ar[r]^{\alpha_{m}} \ar@^{-}[d] & m+1  \ar@^{-}[r]&  Q_{A ''} &  \\
  & & &  &   & Q_{A'''} & &}$$

\noindent where, $Q_{A '}$ and $Q_{A''}$ are quivers of any representation-finite hereditary algebras and  $Q_{A '''}$ is the quiver of  a hereditary algebra of type $A_n$.

\item
$$\label{bypass3}
 \xymatrix @!0 @R=0.8cm  @C=1.7cm  {
 1 \ar[r]^{\alpha_1} & 2 \ar[r]^{\alpha_2} \ar@^{-}[d] & \ar@{.}[r]  & \ar[r]^{\alpha_{m-1}} &m \ar[r]^{\alpha_{m}}\ar@^{-}[d] & m+1 &    \\
   & m+2 &  &   & m+3 & }$$
\end{enumerate}

 Then  $r_{A}= r_{A_1} + r_{A_2} - r_{A_1 \cap A_2}.$
\end{prop}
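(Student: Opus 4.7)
The plan is to apply Theorem~\ref{nilpo}, which yields $r_A = r_{A_1} + r_{A_2} - r_{A_1 \cap A_2}$ provided $\mathcal{B} \neq \emptyset$. By Theorem~\ref{categ} it suffices to show $\mathcal{A} \cap \mathcal{C} = \emptyset$, and Proposition~\ref{4} reduces this to the concrete task of exhibiting a sectional path in $\Gamma_A$ from an indecomposable direct summand of $\mbox{rad}\,P_a$ to an indecomposable direct summand of $I_b/\mbox{soc}\,I_b$, where $a=1$ and $b=m+1$ are the endpoints of the zero-relation $\alpha_m\cdots\alpha_1$.

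In each of the four quivers, comparing composition factors and using Lemma~\ref{proyiny} shows that $P_2^{A_1\cap A_2}$ (viewed as an $A$-module by extension by zero) is an indecomposable direct summand $R$ of $\mbox{rad}\,P_a^A$: it is the summand cut out by the single arrow $\alpha_1$, the zero-relation precisely killing the composition factor $S_{m+1}$ that would otherwise appear. Dually, $I_m^{A_1\cap A_2}$ is an indecomposable direct summand $J$ of $I_b^A/\mbox{soc}\,I_b^A$. The Hom vanishings $\mbox{Hom}(P_a, J) = 0$ and $\mbox{Hom}(R, I_b) = 0$ required by the hypotheses of Proposition~\ref{4} follow immediately from the absence of the composition factors $S_a$ in $J$ and $S_b$ in $R$. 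Thus the problem is reduced to producing a sectional path $R \rightsquigarrow J$ in $\Gamma_{A_1 \cap A_2}$ and lifting it to $\Gamma_A$.

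In Case~(1), $A_1 \cap A_2$ is a linearly oriented $A_{m-1}$ quiver and the modules $R$ and $J$ actually coincide (each is the unique indecomposable with full support on $\{2,\dots,m\}$), so the sectional path is trivial and $\mathcal{A} \cap \mathcal{C} = \emptyset$ follows from the degenerate form of the orbit-graph analysis in the proof of Proposition~\ref{4}: the vertex $[R]=[J]$ already forces any hypothetical $N \in \mathcal{A} \cap \mathcal{C}$ to produce a cycle in the orbit graph of the tree algebra $A$. In Cases~(2) and~(3), $A_1 \cap A_2$ is again Dynkin of type $A$, obtained from the chain $2 \to \cdots \to m$ by attaching the $A_n$-branch $Q_{A'''}$ at vertex $2$ or $m$ respectively; a short sectional path $R \rightsquigarrow J$ is then read off directly from the AR-quiver of this Dynkin algebra, independently of the orientations chosen for $Q_{A'}$, $Q_{A''}$, $Q_{A'''}$. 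Case~(4) is handled by the same direct AR-quiver inspection, $A_1 \cap A_2$ being Dynkin of type $A$ formed from the chain together with two extra leaves at vertices $2$ and $m$.

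Once the sectional path has been built in $\Gamma_{A_1 \cap A_2}$, the iterated application of Lemmas~\ref{1} and~\ref{10} along the chain of one-point extensions and coextensions that express $A$ as a sequence of (co)extensions of $A_1 \cap A_2$ transports it to a sectional path in $\Gamma_A$. The main obstacle is verifying, at each intermediate (co)extension step, the Hom vanishing conditions demanded by those lemmas; as in the iteration step of the proof of Lemma~\ref{1}, these conditions reduce to the observation that every module on our path lives in the full subcategory of $(A_1 \cap A_2)$-modules, hence has zero composition factor at each newly added vertex, so $\mbox{Hom}(X_n, \tau Z_i) = 0$ and $\mbox{Hom}(Z_i, Y_n) = 0$ hold throughout. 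With the sectional path in $\Gamma_A$ secured, Proposition~\ref{4}, Theorem~\ref{categ}, and Theorem~\ref{nilpo} combine to give $r_A = r_{A_1}+r_{A_2}-r_{A_1\cap A_2}$.
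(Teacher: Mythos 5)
Your strategy for cases (1)--(3) is essentially sound and close to the paper's: identify the summands $R\simeq P_2^{A_1\cap A_2}$ of $\mathrm{rad}\,P_1$ and $J\simeq I_m^{A_1\cap A_2}$ of $I_{m+1}/\mathrm{soc}\,I_{m+1}$, produce a sectional path $R\rightsquigarrow J$ in $\Gamma_{A_1\cap A_2}$ (the paper cites \cite[Proposition 2.5(a)]{CG} for this in cases (2),(3), where you only gesture at ``AR-quiver inspection''), and lift it with Lemmas \ref{1} and \ref{10}. Your detour through Proposition \ref{4} and Theorem \ref{categ} is a valid alternative to the paper's more direct route, which applies Lemma \ref{9} to a module $M$ on the lifted sectional path to get $M\notin\mathcal{C}$ and $M\notin\mathcal{A}$ at once, hence $M\in\mathcal{B}$.

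The genuine gap is case (4). You claim it is ``handled by the same direct AR-quiver inspection,'' i.e.\ by a sectional path from $R$ to $J$, but no such path exists there. With the orientation $2\to m+2$, $m\to m+3$ one checks that $\tau I_{m+1}\simeq P_1$, so $\mathrm{rad}\,P_1$ and $I_{m+1}/\mathrm{soc}\,I_{m+1}$ are indecomposable and lie in the same $\tau$-orbit with $I_{m+1}/\mathrm{soc}\,I_{m+1}\simeq\tau^{-2}(\mathrm{rad}\,P_1)$; every path between them is a composition of two hooks through $M\simeq\tau^{-1}(\mathrm{rad}\,P_1)$ and is therefore non-sectional. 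Concretely, for $m=3$ the algebra $A_1\cap A_2$ is $5\leftarrow 2\to 3\to 6$, where $R=P_2$ has $\tau^{-1}P_2=I_3=J$, and both length-two paths $P_2\to E\to I_3$ are hooks. This is exactly why the paper gives a separate, longer argument for case (4): it shows the almost split sequences starting at $\mathrm{rad}\,P_1$ and at $M$ have exactly two indecomposable middle terms, invokes \cite[Remark 3.5]{Cha} to extract a module $N'$ off that $\tau$-orbit (a summand of the second middle term $M'$ of the sequence starting at $X_1$), and applies Lemma \ref{9} to the two length-two sectional paths $\mathrm{rad}\,P_1\to X_1\to N'$ and $N'\to X_2\to I_{m+1}/\mathrm{soc}\,I_{m+1}$ to conclude $N'\in\mathcal{B}$. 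Without some such replacement construction, your argument does not establish $\mathcal{B}\neq\emptyset$ in case (4).
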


\begin{proof}  By Theorem \ref{nilpo} it is enough to prove that $\beta \neq \emptyset$.
Statement (1) is an immediate consequence of Lemma \ref{9}, since $\mbox{rad}\, P_1 \simeq I_{m+1}/\mbox{soc}\, I_{m+1}$.

(2) We prove that there is a sectional path from a direct summand of $\mbox{rad} \,  P_{1}$ to a direct summand of $I_{m+1}/\mbox{soc}\, I_{m+1}$. Then by Lemma \ref{9}, we infer that $\beta \neq \emptyset$.

Since $Q_{A '''}$ is a hereditary algebra of type $A_n$, then $A_1 \cap A_2$ is also a hereditary algebra of type $A_n$. It is clear that a direct summand of $\mbox{rad} \,  P_{1}$ is isomorphic to $P_2$ in $\mbox{mod}\, (A_1 \cap A_2)$ and that a  direct summand of $I_{m+1}/\mbox{soc}\, I_{m+1}$ is isomorphic to $I_m$ in $\mbox{mod}\, (A_1 \cap A_2)$.

Since $A_1 \cap A_2$ is a string algebra then from \cite[Proposition 2.5 (a)]{CG} there is a sectional path from $P_2$ to $I_{m}$ in $\mbox{mod}\, (A_1 \cap A_2)$.  Applying Lemma \ref{1} and Lemma \ref{10} we get that there is a sectional path from $P_{2}$ to $I_{m}$ in $\Gamma_{\mbox{mod}\, A}$, proving the result.

(3) The proof of this case is similar to Statement (2).

(4) We only analyze the case where the edges have the orientation  $2\rightarrow m+2$ and $n \rightarrow m+3$. The other cases follow similarly.

By definition, it is not hard to see that $\tau I_{m+1} \simeq P_1$. Hence there is an almost split sequence $0 \rightarrow P_1 \rightarrow M \rightarrow I_{m+1} \rightarrow 0$ where $M \in \mbox{mod}\, A$. Moreover, since $\mbox{Hom}_A(P_{1}, I_{m+1})=0$ then $M$ is indecomposable. In consequence, $\mbox{rad}\, P_{1}$ and $I_{m+1}/\mbox{soc}\, I_{m+1}$ are indecomposable too,  and $M \simeq \tau^{-1} \mbox{rad}\, P_{1}\simeq \tau I_{m+1}/\mbox{soc}\, I_{m+1}$.

Note that the almost split sequences starting in $\mbox{rad}\, P_{1}$ and in $M$ have exactly two indecomposable middle terms. In fact, assume that the almost split sequence starting in $\mbox{rad}\, P_{1}$ is as follows:
 $0 \rightarrow \mbox{rad}\, P_{1} \rightarrow N_1 \rightarrow M \rightarrow 0$. Clearly,  $P_1 \not \simeq N_1$. Now, if $N_1$ decomposes in more than two indecomposable summand then we infer that $\mbox{Hom}_A(\mbox{rad}\, P_{1}, I_{m+1})\neq 0$ a contradiction to the fact that $\mbox{Hom}_A(P_{1}, I_{m+1})$ vanishes. Thus $N_1 = P_1 \oplus X_1$ with $X_1$ indecomposable. With similar arguments we have that  $N_2 = I_{m+1} \oplus X_2$,  with $X_2$ indecomposable.

We illustrate the situation as follows

$$%\label{bypass}
 \xymatrix @!0 @R=1.0cm  @C=1.9cm  {
 & P_1 \ar[rd]  \ar@{.}[rr] & & I_{m+1}\ar[rd] &       \\
   \mbox{rad}\, P_{1} \ar[ru]  \ar[rd]  &  & M \ar[ru]  \ar[rd]  &   & I_{m+1}/\mbox{soc}\, I_{m+1}   \\
     & X_1 \ar[ru] &  &    X_2 \ar[ru] &  }$$

Note that $X_1$ is not injective, since  the irreducible morphism from $X_1 \rightarrow M$ is a monomorphism.  Similarly $X_2$ is not projective. Hence, there is an almost split sequence starting in $X_1$. By \cite[Remark 3.5]{Cha}, we know that the almost split sequence starting in $X_1$ has more than one indecomposable middle term. Assume that the middle term of the almost split sequence starting in $X_1$   is $M \oplus M'$,  with $M' \neq 0$.

Consider $N'$ an indecomposable  direct summand of $M'$. Following Lemma \ref{9} we infer that $N' \in  \mathcal{B}$.
Therefore $\mathcal{B} \neq \emptyset$.
\end{proof}

The next result holds when $A_1\cap A_2$ is a hereditary algebra of type $D_n$.

\begin{prop}\label{interDn}
 Let $A \simeq kQ_A/I_A$,  where $Q_A$ is given by the quiver
 \begin{enumerate}
\item
 $$%\label{Dn1}
 \xymatrix @!0 @R=0.8cm  @C=1.7cm  {
Q_{A'}\ar@^{-}[r] & 1 \ar[r]^{\alpha_1} & 2 \ar[r]^{\alpha_2}& 3 \ar[r]^{\alpha_3} \ar@^{-}[d] & \ar@{.}[r]  & \ar[r]^{\alpha_{m}} & m+1 \ar@^{-}[r]&  Q_{A ''} &  \\
 & & & m+2 &  &    & &}$$

 \item
$$\label{Dn2}
 \xymatrix @!0 @R=0.8cm  @C=1.7cm  {
Q_{A'}\ar@^{-}[r] & 1 \ar[r]^{\alpha_1} &  \ar@{.}[r]  & \ar[r]^{\alpha_{m-1}} & m-1 \ar[r]^{\alpha_{m-1}} \ar@^{-}[d] & m \ar[r]^{\alpha_{m}}& m+1  \ar@^{-}[r]&  Q_{A ''} &  \\
  & &   &   &  m+2 & &&}$$

  \end{enumerate}
\noindent where $Q_{A'}$ and $Q_{A''}$ are quivers of any representation-finite hereditary algebras and  $I_A=  <{\alpha_{m}} \dots {\alpha_{1}}>$, with $m \geq 4$. Then  $r_{A}= r_{A_1} + r_{A_2} - r_{A_1 \cap A_2}.$
\end{prop}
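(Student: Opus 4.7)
The plan is to reduce the claim to the existence of a sectional path, via the chain of results already in hand. By Theorem \ref{nilpo}, it suffices to show that the intermediate subcategory $\mathcal{B}$ is non-empty, since $A_1$, $A_2$ and $A_1\cap A_2$ are all hereditary and $A$ is the linearly oriented pullback of $A_1\twoheadrightarrow A_1\cap A_2$ and $A_2\twoheadrightarrow A_1\cap A_2$. By Theorem \ref{categ} it is enough to prove $\mathcal{A}\cap\mathcal{C}=\emptyset$, and by Proposition \ref{4} this in turn follows from the existence of a sectional path in $\Gamma_A$ from some indecomposable direct summand of $\mathrm{rad}\,P_1$ to some indecomposable direct summand of $I_{m+1}/\mathrm{soc}\,I_{m+1}$.

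Next, I would identify the relevant summands inside $\mathrm{mod}(A_1\cap A_2)$. By construction, after deleting the arrows $\alpha_1$ and $\alpha_m$ and discarding the vertices of $Q_{A'}$ and $Q_{A''}$, the algebra $A_1\cap A_2$ is a hereditary algebra of Dynkin type $D_{m}$; its quiver is the linear subpath $2\to 3\to\dots\to m$ together with the extra vertex $m+2$ attached at vertex $3$ in case (1), or attached at vertex $m-1$ in case (2). Invoking Lemma \ref{proyiny}, the indecomposable summand of $\mathrm{rad}\,P_1$ sitting in $\mathrm{mod}(A_1\cap A_2)$ is the projective $P_2$ of $A_1\cap A_2$, while the indecomposable summand of $I_{m+1}/\mathrm{soc}\,I_{m+1}$ lying in $\mathrm{mod}(A_1\cap A_2)$ is the injective $I_m$ of $A_1\cap A_2$.

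Now I would exhibit a sectional path $P_2\rightsquigarrow I_m$ in $\Gamma_{A_1\cap A_2}$. This is a purely combinatorial statement about the Auslander–Reiten quiver of a hereditary algebra of type $D_m$, and can be read off from the standard shape of that quiver: starting from $P_2$, one follows the chain of irreducible maps through the successive indecomposables corresponding to the sincere part of the branch at vertex $3$ (respectively vertex $m-1$), and ends at $I_m$, checking at each intermediate vertex $X$ of the path that the next module is not $\tau^{-1}$ of the previous one. Since the two branches of $D_m$ that must be crossed are placed at different ends of the linear part, one can choose a path that goes through each AR-mesh in exactly one direction, so sectionality is automatic.

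Finally, I transport this sectional path to $\mathrm{mod}\,A$. Both in case (1) and case (2) the algebra $A_1$ is obtained from $A_1\cap A_2$ by a finite sequence of one–point extensions and coextensions starting with the extension by $P_2$, and then $A$ is obtained from $A_1$ by one-point (co)extensions starting with the coextension by $I_m$; symmetrically, $A_2$ starts from $A_1\cap A_2$ with the coextension by $I_m$, and $A$ is obtained from $A_2$ by extensions starting with $P_2$. Hence a successive application of Lemma \ref{1}(a) and Lemma \ref{10}(b), or dually Lemma \ref{10}(a) and Lemma \ref{1}(b), shows that every irreducible morphism in the path $P_2\rightsquigarrow I_m$ remains irreducible in $\mathrm{mod}\,A$, and $\tau$ is preserved throughout the process so sectionality persists. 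The main obstacle is the explicit verification that the sectional path in $D_m$ really exists for every admissible $m\geq 4$ and for both orientations of the attached branch; once this combinatorial check is carried out, Theorem \ref{categ}, Proposition \ref{4} and Theorem \ref{nilpo} close the argument.
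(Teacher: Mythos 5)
Your overall reduction coincides with the paper's: everything comes down to producing a sectional path in $\Gamma_A$ from the summand $P_2$ of $\mbox{rad}\,P_1$ to the summand $I_m$ of $I_{m+1}/\mbox{soc}\,I_{m+1}$, identified inside the type-$D_m$ algebra $A_1\cap A_2$ and then lifted to $\mbox{mod}\,A$ via Lemma \ref{1} and Lemma \ref{10}. (You conclude $\mathcal{B}\neq\emptyset$ by passing through Proposition \ref{4} and Theorem \ref{categ}, whereas the paper invokes Lemma \ref{9} directly; both routes are available and this difference is immaterial.)

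The genuine gap is precisely the step you yourself flag as ``the main obstacle'': you never construct the sectional path $P_2\rightsquigarrow I_m$ in $\Gamma_{A_1\cap A_2}$, and the justification offered --- that the two branches of $D_m$ sit at ``different ends of the linear part'' so ``sectionality is automatic'' --- is not an argument; indeed, in case (1) the branching vertex $3$ is adjacent to the vertex $2$ where the path starts, so no such separation exists, and in any event sectionality of a specific path must be checked against the $\tau$-structure, not against the shape of $Q_{A_1\cap A_2}$. The paper closes this step with an explicit $\tau$-orbit computation: $\tau I_m\simeq S_3$ and, by \cite{LM}, $\tau S_i\simeq S_{i+1}$ for $i=3,\dots,m$, whence $\tau^{m-2}I_m\simeq S_m\simeq P_m$ and, inductively, $I_k\simeq \tau^{-(m-2)}P_k$ for $k=3,\dots,m$. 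Reflecting the sectional path of projectives $P_m\rightarrow P_{m-1}\rightarrow\dots\rightarrow P_2$ through successive meshes then produces the staircase
$$P_2\rightarrow \tau^{-1}P_3\rightarrow \tau^{-2}P_4\rightarrow\dots\rightarrow \tau^{-(m-2)}P_m\simeq I_m,$$
which is sectional because $\tau^{-k}P_{k+2}\simeq \tau^{-(k-1)}P_k$ would force $\tau^{-1}P_{k+2}\simeq P_k$ with $P_k$ projective, a contradiction. Without this construction (or an equivalent explicit one, carried out for both positions of the attached vertex $m+2$ and both orientations of that edge), the argument is incomplete, since the remaining ingredients you cite all hinge on the existence of exactly this path.
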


\begin{proof} We prove that there is a sectional path in $\mbox{mod}\,A$ from a direct summand of  $\mbox{rad}\,P_1$ to a direct summand of $I_{m+1}/ \mbox{soc}\,I_{m+1}$. By Lemma \ref{9},  we have that $\mathcal{B}\neq \emptyset$, and therefore we get the result.

We only prove Statement (1), and we assume that the edge has the orientation $m+2\rightarrow 3$. The others cases, follow similarly.
Note that $Q_{{A_1\cap A_2}}$ is as follows:

\begin{displaymath}
    \xymatrix  @R=0.3cm  @C=0.7cm {
       2\ar[dr]& &  & & &   \\
       & 3\ar[r] & 4\ar[r]& ...\ar[r] &m-1\ar[r] &m.  \\
       m+2\ar[ur] & & & & & }
\end{displaymath}

We claim that $P_{m}\simeq \tau_{A_1\cap A_2}^{(m-2)}I_m$. In fact, it is not hard to see that $\tau(I_{m})\simeq  S_{3}$.
By \cite{LM}, we know that $\tau(S_{i})\simeq S_{i+1}$ for $i=3, \dots , m$. Hence, $\tau^{(m-2)}I_m\simeq S_m\simeq P_m$.
Moreover, inductively we have that $I_{k}\simeq \tau^{-(m-2)}P_k$, for  $k=3, \dots, m$.

Since $A_1\cap A_2$ is hereditary then there is a sectional path $P_m {\rightarrow }P_{m-1}{\rightarrow } \cdots {\rightarrow } P_{3}{\rightarrow } P_{2}$ between projective $A_1\cap A_2$-modules. Then we also have a sectional path as follows:

\begin{equation} \label{camDn}
P_2 {\rightarrow }\tau^{-1}P_{3}{\rightarrow } \cdots {\rightarrow } \tau^{-(m-3)}P_{m-1}{\rightarrow } \tau^{-(m-2)}P_{m}\simeq I_m.
\end{equation}

Note that $A$ can be obtained by a sequence of one-point extensions and one-point coextensions of $A_1\cap A_2$. Without loss of generality, the first extension can be done in $P_2$ and the first coextension in $I_m$. By Lemma \ref{10}, we know that (\ref{camDn}) is a sectional path in $\mbox{mod}\,A$. The result follows since $P_2$ and $I_m$ are direct summands of $\mbox{rad}\,P_1$ and $I_{m+1}\backslash \mbox{soc}\,I_{m+1}$, respectively.
\end{proof}

\begin{obs} \emph{ In Proposition \ref{ejemplos} (1), (2), (3) and Proposition \ref{interDn}
we prove that there is a sectional path from a direct summand of $\mbox{rad}\,P_1$ to a direct summand of
$I_{m+1}/\mbox{soc}\,I_{m+1}$. Therefore, by Proposition \ref{4} we have that $\mathcal{A}\cap \mathcal{C}=\emptyset$ and
by \cite[Proposition 2]{CW} we know that these algebras are quasitilted.}
\end{obs}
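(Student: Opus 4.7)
The plan is to recognize that the Remark packages together what has already been established in the preceding four results, so the proof reduces to a clean invocation chain: sectional path $\Rightarrow$ $\mathcal{A}\cap\mathcal{C}=\emptyset$ $\Rightarrow$ quasitilted. Each of Proposition \ref{ejemplos}(1), (2), (3) and Proposition \ref{interDn} is proved by exhibiting (implicitly or explicitly) a sectional path from a direct summand of $\mathrm{rad}\,P_1$ to a direct summand of $I_{m+1}/\mathrm{soc}\,I_{m+1}$; the first step is simply to point at those constructions and record that the hypothesis of Proposition \ref{4} is met in each case.

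Second, I would apply Proposition \ref{4} directly. That proposition says: for a representation-finite tree algebra with a single zero-relation $\alpha_m\dots\alpha_1$, the existence of such a sectional path forces $\mathcal{A}\cap\mathcal{C}=\emptyset$. Since the four families under consideration all satisfy the tree/monomial/single-zero-relation hypotheses by construction, the conclusion $\mathcal{A}\cap\mathcal{C}=\emptyset$ is immediate. No new combinatorics on $\Gamma_A$ or on the orbit graph is needed here, because that work was already absorbed in the proof of Proposition \ref{4}.

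Third, I would invoke \cite[Proposition 2]{CW}, which characterizes quasitiltedness for linearly oriented pullbacks $A$ of $A_1\twoheadrightarrow A_1\cap A_2$ and $A_2\twoheadrightarrow A_1\cap A_2$ with $A_1,A_2$ hereditary: when $\mathcal{A}\cap\mathcal{C}=\emptyset$, the pullback $A$ is quasitilted. Since in each of the cases considered $A_1$ and $A_2$ are themselves representation-finite hereditary algebras (the quivers $Q_{A'}, Q_{A''}, Q_{A'''}$ are all hereditary by hypothesis, and the zero-relation is isolated on the bridging segment), $A$ fits the linearly oriented pullback framework of Section \ref{A1A2}, so the CW criterion applies and yields quasitiltedness.

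The only real step that requires care, and the one I expect to be the main obstacle, is checking that the hypotheses of \cite[Proposition 2]{CW} are genuinely satisfied in all four families, in particular that $A$ is indeed the linearly oriented pullback of $A_1\twoheadrightarrow A_1\cap A_2$ and $A_2\twoheadrightarrow A_1\cap A_2$ with $A_1,A_2$ hereditary. For cases (1)--(3) of Proposition \ref{ejemplos} and Proposition \ref{interDn}, this follows because the zero-relation is the unique relation of $I_A$ and is concentrated on the bridging path $1\to\dots\to m+1$, so the construction of $A_1,A_2$ from Section \ref{A1A2} deletes exactly one of the arrows $\alpha_1$ or $\alpha_m$ and leaves connected hereditary algebras, and there are no paths running from $(Q_{A_1\cap A_2})_0$ into $(Q_{A_1})_0\setminus(Q_{A_1\cap A_2})_0$ nor from $(Q_{A_2})_0\setminus(Q_{A_1\cap A_2})_0$ into $(Q_{A_1\cap A_2})_0$. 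Once this verification is made, the chain ``sectional path $\Rightarrow$ $\mathcal{A}\cap\mathcal{C}=\emptyset$ $\Rightarrow$ quasitilted'' closes the remark.
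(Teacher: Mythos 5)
Your proposal follows exactly the paper's own reasoning: the Remark is justified by pointing to the sectional paths exhibited in Proposition \ref{ejemplos} (1), (2), (3) and Proposition \ref{interDn}, then applying Proposition \ref{4} to get $\mathcal{A}\cap\mathcal{C}=\emptyset$, and finally citing \cite[Proposition 2]{CW} for quasitiltedness. Your extra verification that $A$ is a linearly oriented pullback with $A_1$, $A_2$ hereditary is a sensible precaution but is already built into the construction of Section \ref{A1A2}, so the approach is essentially identical to the paper's.
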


The next result holds when $A_1\cap A_2$ is a hereditary algebra of type $E_6$.

\begin{prop}\label{3}
Let $A$ be a representation-finite tree algebra with only one zero-relation of length $m$,  with $m \geq 3$. Let $A_1$ and $A_2$ be algebras built as we explained above. Suppose that $A_1\cap A_2$ is a hereditary algebra of type $E_6$. Assume that the first step to obtain $A_1$ from $A_1\cap A_2$ is a extension by a module $X$ and  that the first step to obtain $A_2$ from $A_1\cap A_2$ is a coextension by a module $Y$. Then there is an indecomposable module $M$  and sectional paths  from $X$ to $M$ and from $M$ to $Y$ in $\Gamma_{A_1\cap A_2}$. Moreover, there exist sectional paths from $X$ to $M$ and from $M$ to $Y$ in $\Gamma_{A_1}$ and in $\Gamma_{A_2}$.
\end{prop}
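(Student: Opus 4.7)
My plan is as follows. First I would identify the modules $X$ and $Y$ explicitly. By the construction in Section \ref{A1A2}, the first extension producing $A_1$ from $A_1\cap A_2$ is by $X = P_{a_1}$, the indecomposable projective at $a_1 = e(\alpha_1)$, and the first coextension producing $A_2$ from $A_1\cap A_2$ is by $Y = I_{a_{m-1}}$, the indecomposable injective at $a_{m-1} = s(\alpha_m)$, both taken as $(A_1\cap A_2)$-modules. Observe further that, by Lemma \ref{2}(1), since $A_1\cap A_2$ is of type $E_6$ and not of type $A_n$, the vertices $a_1$ and $a_{m-1}$ of $Q_{A_1\cap A_2}$ each have exactly one incident arrow, and hence are leaves of the $E_6$ Dynkin diagram.

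The main step is to exhibit an indecomposable $(A_1\cap A_2)$-module $M$ admitting a sectional path from $P_{a_1}$ to $M$ and a sectional path from $M$ to $I_{a_{m-1}}$ in $\Gamma_{A_1\cap A_2}$. For this I would perform a finite case analysis, up to the symmetry of the $E_6$ graph, on the orientation of $Q_{A_1\cap A_2}$ and on the choice of the two leaves $a_1$ and $a_{m-1}$. Since the Auslander–Reiten quiver of a hereditary algebra of type $E_6$ has only $36$ indecomposable modules and its orbit graph is $E_6$ itself, for each admissible configuration one can read off a common module $M$ by inspecting the hammock of $P_{a_1}$ and the cohammock of $I_{a_{m-1}}$; indeed, sectional paths originating at a projective at a leaf (resp.\ terminating at an injective at a leaf) reach a large portion of $\Gamma_{A_1\cap A_2}$, so their intersection is nonempty.

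Having found $M$ and the two sectional paths in $\Gamma_{A_1\cap A_2}$, I would then transfer them to $\Gamma_{A_1}$ using Lemma \ref{1}(a), which directly handles the path $X \rightsquigarrow M$ since the first step in building $A_1$ from $A_1 \cap A_2$ is an extension by $X$. For the path $M \rightsquigarrow Y$ in $\Gamma_{A_1}$, I would iterate Proposition \ref{lema S-S} and its dual along the sequence of extensions and coextensions producing $A_1$ from $A_1\cap A_2$, using that each intermediate module of the path lies in $\mbox{mod}\,(A_1\cap A_2)$ and therefore has no nonzero morphism into (resp.\ out of) the new injective (resp.\ projective) added at each step. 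A symmetric argument using Lemma \ref{10} and the dual iteration of Proposition \ref{dual lema S-S} yields both sectional paths in $\Gamma_{A_2}$.

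The main obstacle I anticipate is the combinatorial case analysis on $E_6$: one must confirm that for every admissible pair of leaves $a_1, a_{m-1}$ and every orientation of $Q_{A_1\cap A_2}$, a common middle module $M$ exists, and that the paths chosen really are sectional (not merely paths of irreducible morphisms). However, this is a finite verification on the explicitly known AR-quiver of type $E_6$, so once the list of cases is laid out, the argument reduces to inspection.
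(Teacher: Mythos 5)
Your overall strategy matches the paper's: identify $X$ and $Y$ as the projective and the injective at $e(\alpha_1)$ and $s(\alpha_m)$, use Lemma \ref{2}(1) to force these to be leaves of the $E_6$ diagram, exhibit $M$ by a finite inspection of $\Gamma_{A_1\cap A_2}$ (the paper first rules out the short-branch leaf, since extending there creates a subquiver of type $\widetilde{E}_6$, and then writes $M$ down explicitly by its composition factors in the two possible orientations of the short branch), and finally transfer the sectional paths to $\Gamma_{A_1}$ and $\Gamma_{A_2}$. Up to the transfer step your plan is sound, and your use of Lemma \ref{1}(a) for $X\rightsquigarrow M$ in $\Gamma_{A_1}$ and (implicitly) Lemma \ref{10}(a) for $M\rightsquigarrow Y$ in $\Gamma_{A_2}$ is exactly what those lemmas are for.

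The gap is in the remaining two transfers: $M\rightsquigarrow Y$ into $\Gamma_{A_1}$ and, symmetrically, $X\rightsquigarrow M$ into $\Gamma_{A_2}$. You justify them by iterating Proposition \ref{lema S-S} and its dual, ``using that each intermediate module of the path lies in $\mbox{mod}\,(A_1\cap A_2)$ and therefore has no nonzero morphism into (resp.\ out of) the new injective (resp.\ projective)''. That reasoning is valid at every step except the first one: the first step in building $A_1$ is the extension by $X$ itself, and $X$ is an $(A_1\cap A_2)$-module, not a module supported outside $Q_{A_1\cap A_2}$. For an almost split sequence $0\to \tau U_{i+1}\to E\to U_{i+1}\to 0$ along the path $M\rightsquigarrow Y$, Proposition \ref{lema S-S} requires $\mbox{Hom}_{A_1\cap A_2}(X,\tau U_{i+1})=0$, and this fails because the modules on that path lie in the $\tau^{-1}$-shift of the forward cone of $X$ (for instance $\tau(\tau^{-1}X_2)=X_2$ receives the nonzero composite of the sectional path $X\to X_1\to X_2$). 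Consequently those almost split sequences do \emph{not} survive the extension; the paper's knitting diagrams show that after extending by $X$ the sectional path from $M$ to $Y$ runs through different modules ($\tau_{B_1}^{-j}$-translates rather than $\tau_{A_1\cap A_2}^{-j}$-translates), so the path exists but is not the literal image of the old one. To close this you must re-knit the relevant slice of $\Gamma$ after each (co)extension, as the paper does, using also that at most two further extensions are possible before a subquiver of type $\widetilde{E}_8$ appears; a bare appeal to Proposition \ref{lema S-S} does not suffice for these two directions.
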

\begin{proof} By hypothesis $A_1\cap A_2$ is a hereditary algebra of type $E_6$. Then we have the following subquiver

\[{\xymatrix{&&b\ar@^{-}[d]&&\\
  a\ar@^{-}[r]& a_1\ar@^{-}[r]&a_2 \ar@^{-}[r]&a_3\ar@^{-}[r]&c.}}\]

By Lemma \ref{2} (1), we can extend or coextend in $a$, $b$ or $c$. If we extend or coextend in $b$ then we get a subgraph of type $\widetilde{E}_6$ which is a contradiction, since $A$ is representation-finite. Hence, the unique option is to extend in $a$ and to coextend in $c$ or viceversa.

Without loss of generality, assume that we extend  $A_1\cap A_2$ in $a$ and coextend in $c$. By the possible orientation of the arrows in $Q_{A_1\cap A_2}$ we have two situations to consider. First, if in $Q_{A_1\cap A_2}$ there  is an arrow from $a_2$ to $b$.
Then for $X=P_a$ and $Y=I_b$ there is an indecomposable module $M$ whose composition factors are given by the simples
$${\small{\txt{$a$\;\;\;\,\,\\$a_1$\,\,\,\, \,\,\\$a_2$\,$a_2$\\$a_3$\,b\\c\,\,\,\,\,\,}}},$$
\noindent and sectional paths from $P_a$ to $M$ and from $M$ to $I_c$.

%$\mbox{Hom}_{_{A_1\cap A_2}}(P_a, M)\neq 0$ and $\mbox{Hom}_{_{A_1\cap A_2}}( M, I_c)\neq 0$.

Secondly, if in $Q_{A_1\cap A_2}$ there is an arrow from $b$ to $a_2$ then again for $X=P_a$ and $Y=I_c$ there is an indecomposable module $M$ whose composition factors are $${\small{\txt{$a$\;\;\;\,\,\\$a_1$\,b\\$a_2$\,$a_2$\\$a_3\;\;\;\,\,$\\c\,\,\,\,\,\,}}}$$
 and sectional paths from $P_a$ to $M$ and from $M$ to $I_c$.

%$\mbox{Hom}_{_{A_1\cap A_2}}(P_a, M)\neq 0$ and $\mbox{Hom}_{_{A_1\cap A_2}}( M, I_c)\neq 0$.

It is not hard to see that in both cases $\Gamma_{A_1\cap A_2}$ has a subquiver as follows:

$$\xymatrix @!0 @R=1.0cm  @C=1.6cm  { X \ar[rd]\ar@{.}[rr]&  & \tau_{_{A_1\cap A_2}}^{-1}X \ar[rd]\ar@{.}[rr]&   & \tau_{_{A_1\cap A_2}}^{-2}X \ar[rd]\ar@{.}[rr] &    & Y  \\
   & X_1 \ar[ru]\ar[rd]\ar@{.}[rr]& & \tau_{_{A_1\cap A_2}}^{-1}X_1 \ar[rd] \ar[ru]\ar@{.}[rr] & & \tau_{_{A_1\cap A_2}}^{-2}X_1 \ar[ru]&  \\
 && X_2 \ar[ru] \ar[r]  \ar[rd] & Z\ar[r] & \tau_{_{A_1\cap A_2}}^{-1}X_2 \ar[ru] &&\\
       & & & M  \ar[ru]& &&  }$$

Note that we can do two consecutive extensions to get $A_1$, such as,  $A_1= [({A_1\cap A_2})[X]][W]$. If we do another extension we find a subquiver of type $\widetilde{E}_8$ getting a contradiction to the fact that $A$ is representation-finite.
By the knitting technique we can obtain in $\Gamma_{({A_1\cap A_2})[X]}$ the following subquiver

$$\xymatrix @!0 @R=1.0cm  @C=1.6cm  { & P_X  \ar[rd]\ar@{.}[rr] & & \tau_{_{A_1\cap A_2}}^{-1}X \ar[rd]\ar@{.}[rr]&   & \tau_{_{A_1\cap A_2}}^{-2}X \ar[rd]\ar@{.}[rr] &    & Y  \\
X \ar[ru] \ar[rd]\ar@{.}[rr]&  & \tau_{_{B_1}}^{-1}X \ar[rd]\ar@{.}[rr] \ar[ru]&   & \tau_{_{B_1}}^{-2}X \ar[rd]\ar@{.}[rr] \ar[ru] &    & \tau_{_{B_1}}^{-3}X  \ar[ru] &\\
   & X_1 \ar[ru]\ar[rd]\ar@{.}[rr]& & \tau_{_{B_1}}^{-1}X_1 \ar[rd] \ar[ru]\ar@{.}[rr] & & \tau_{_{B_1}}^{-2}X_1 \ar[ru]& & \\
 && X_2 \ar[ru] \ar[r]  \ar[rd] & Z\ar[r] & \tau_{_{B_1}}^{-1}X_2 \ar[ru] && &\\
       & & & M  \ar[ru]& && & }$$

\noindent and finally, in $\Gamma_{A_1} = \Gamma_{({A_1\cap A_2})[X][W]}$ there is a subquiver as follows

$$\xymatrix @!0 @R=1.0cm  @C=1.6cm  { & & P_W  \ar[rd]\ar@{.}[rr] & & \tau_{_{A_1\cap A_2}}^{-1}X \ar[rd]\ar@{.}[rr]&   & \tau_{_{A_1\cap A_2}}^{-2}X \ar[rd]\ar@{.}[rr] &    & Y  \\
& P_X  \ar[rd]\ar@{.}[rr] \ar[ru]& & \tau_{_{B_2}}^{-1}X \ar[rd]\ar@{.}[rr]\ar[ru]&   & \tau_{_{B_2}}^{-2}X \ar[rd]\ar@{.}[rr]\ar[ru] &    & \tau_{_{B_2}}^{-3}X \ar[ru] & \\
X \ar[ru] \ar[rd]\ar@{.}[rr]&  & \tau_{_{B_1}}^{-1}X \ar[rd]\ar@{.}[rr] \ar[ru]&   & \tau_{_{B_2}}^{-2}X \ar[rd]\ar@{.}[rr] \ar[ru] &    & \tau_{_{B_2}}^{-3}X \ar[ru] &&\\
   & X_1 \ar[ru]\ar[rd]\ar@{.}[rr]& & \tau_{_{B_1}}^{-1}X_1 \ar[rd] \ar[ru]\ar@{.}[rr] & & \tau_{_{B_2}}^{-2}X_1 \ar[ru]& & & \\
 && X_2 \ar[ru] \ar[r]  \ar[rd] & Z\ar[r] & \tau_{_{B_1}}^{-1}X_2 \ar[ru] &&& &\\
       & & & M  \ar[ru]& && & &}$$
\noindent where there is a sectional path from $X$ to $M$ and from $M$ to $Y$, as we want to prove.

We also can coextend $({A_1\cap A_2})[X]$ in order to get $A_1$. Let $A_1={[0][({A_1\cap A_2})[X]}]$. Then in $\Gamma_{A_1}$ there is a subquiver as follows

$$\xymatrix @!0 @R=1.0cm  @C=1.6cm  {  P_0  \ar[rd]\ar@{.}[rr] & & P_X \ar[rd]\ar@{.}[rr]&   & \tau_{_{A_1\cap A_2}}^{-1}X \ar[rd]\ar@{.}[rr] &  & \tau_{_{A_1\cap A_2}}^{-2}X \ar[rd]\ar@{.}[rr]&  & Y  \\
&\overline{ P_X}  \ar[rd]\ar@{.}[rr] \ar[ru]& & \tau_{_{B_3}}^{-1}\overline{ P_X} \ar[rd]\ar@{.}[rr]\ar[ru]&   & \tau_{_{B_3}}^{-2}\overline{ P_X} \ar[rd]\ar@{.}[rr]\ar[ru] &    & \tau_{_{B_3}}^{-3}\overline{ P_X} \ar[ru] & \\
X \ar[ru] \ar[rd]\ar@{.}[rr]&  & \tau_{_{B_3}}^{-1}X \ar[rd]\ar@{.}[rr] \ar[ru]&   & \tau_{_{B_3}}^{-2}X \ar[rd]\ar@{.}[rr] \ar[ru] &    & \tau_{_{B_3}}^{-3}X \ar[ru] &&\\
   & X_1 \ar[ru]\ar[rd]\ar@{.}[rr]& & \tau_{_{B_3}}^{-1}X_1 \ar[rd] \ar[ru]\ar@{.}[rr] & & \tau_{_{B_3}}^{-2}X_1 \ar[ru]& & & \\
 && X_2 \ar[ru] \ar[r]  \ar[rd] & Z\ar[r] & \tau_{_{B_3}}^{-1}X_2 \ar[ru] &&& &\\
       & & & M  \ar[ru]& && & &}$$

\noindent where there is also a sectional path from $X$ to $M$ and from $M$ to $Y$.

With similar arguments as above we can prove that there are sectional paths from $X$ to $M$ and from $M$ to $Y$        in $\Gamma_{A_2}$.
\end{proof}

\begin{teo}\label{E6} Let $A$ be a representation-finite monomial tree algebra  with only one zero-relation $\alpha_1\dots \alpha_m$, with $m \geq 3$, where $s(\alpha_1)=a$ and $e(\alpha_m)=b$. Let $A_1 \cap A_2$ be a hereditary algebra of type $E_6$. Then $r_{A}= r_{A_1} + r_{A_2} - r_{A_1 \cap A_2}.$
\end{teo}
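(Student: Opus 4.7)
The strategy is to verify the hypothesis $\mathcal{B}\neq\emptyset$ of Theorem \ref{nilpo}. Because $A$ is a representation-finite monomial tree algebra, its orbit graph is a tree and hence $\Gamma_A$ is a component with length. By Theorem \ref{categ} it suffices to prove $\mathcal{A}\cap\mathcal{C}=\emptyset$, and by Proposition \ref{4} this reduces to exhibiting a sectional path in $\Gamma_A$ from an indecomposable direct summand of $\mbox{rad}\,P_a$ to an indecomposable direct summand of $I_b/\mbox{soc}\,I_b$.

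First I would describe the construction of $A_1$ and $A_2$ from $A_1\cap A_2$. By Lemma \ref{2}(1), since $A_1\cap A_2$ is of type $E_6$, the vertices $a_1$ and $a_m$ adjacent to $a$ and $b$ in $Q_{A_1\cap A_2}$ each carry exactly one arrow of $Q_{A_1\cap A_2}$, so both $A_1$ and $A_2$ are obtained from $A_1\cap A_2$ by sequences of one-point extensions and coextensions attached at the two leaves of the $E_6$ diagram opposite the branching vertex. By Lemma \ref{proyiny}, combined with the remark preceding Lemma \ref{1}, an indecomposable summand $X$ of $\mbox{rad}\,P_a$ (respectively, $Y$ of $I_b/\mbox{soc}\,I_b$) coincides with the $A_1\cap A_2$-module by which one extends (respectively, coextends) $A_1\cap A_2$ at the first step of the construction of $A_2$ (respectively, $A_1$), up to interchanging the roles of $A_1$ and $A_2$ if necessary.

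Next I would invoke Proposition \ref{3} (possibly with $A_1$ and $A_2$ swapped, which is a mere relabeling) to obtain an indecomposable module $M$ together with sectional paths $X\rightsquigarrow M$ and $M\rightsquigarrow Y$ in $\Gamma_{A_1\cap A_2}$, $\Gamma_{A_1}$, and $\Gamma_{A_2}$. Iterating Lemma \ref{1} and Lemma \ref{10} along the subsequent one-point extensions and coextensions that carry $A_1$ and $A_2$ up to $A$, both pieces persist as sectional paths of irreducible morphisms between indecomposables in $\Gamma_A$. Concatenating them yields a path $X\rightsquigarrow M\rightsquigarrow Y$ in $\Gamma_A$.

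The main technical point is the sectionality of this concatenation at the joining module $M$. The explicit mesh description appearing in the proof of Proposition \ref{3} shows that the immediate predecessor of $M$ along the path is a module $X_2$ and its immediate successor is $\tau^{-1}X_2$, so the sectional condition at $M$ reduces to $X_2\not\simeq\tau^2 X_2$; this holds because no indecomposable in a Dynkin-type component of finite representation type is $\tau$-periodic. The symmetric case, in which $A_1$ is built by a first coextension and $A_2$ by a first extension, is handled by duality. Once the sectional path from a summand of $\mbox{rad}\,P_a$ to a summand of $I_b/\mbox{soc}\,I_b$ is in place, Proposition \ref{4}, Theorem \ref{categ} and Theorem \ref{nilpo} combine to give $r_A=r_{A_1}+r_{A_2}-r_{A_1\cap A_2}$.
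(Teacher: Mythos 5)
Your reduction to $\mathcal{A}\cap\mathcal{C}=\emptyset$ via Proposition \ref{4} fails at the one point you flag as "the main technical point": the concatenation $X\rightsquigarrow M\rightsquigarrow Y$ is \emph{not} sectional at $M$. In the mesh picture from the proof of Proposition \ref{3}, the predecessor of $M$ on the path is $X_2$ and its successor is $\tau^{-1}X_2$; a path $Y_{i-1}\to Y_i\to Y_{i+1}$ fails to be sectional precisely when $Y_{i+1}\simeq\tau^{-1}Y_{i-1}$, and here $Y_{i+1}=\tau^{-1}X_2=\tau^{-1}Y_{i-1}$ on the nose. (Your condition "$X_2\not\simeq\tau^2X_2$" comes from reading the displayed definition with $\tau$ in place of $\tau^{-1}$; that reading cannot be the intended one, since in a directed component $Y_{i+1}\simeq\tau Y_{i-1}$ would force an oriented cycle, making every path trivially "sectional" and the notion vacuous.) The module $M$ sits at the bottom of a mesh, so $X_2\to M\to\tau^{-1}X_2$ is a hook through an almost split sequence --- the canonical non-sectional configuration. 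This is why the remark following Proposition \ref{interDn} lists only Proposition \ref{ejemplos} (1)--(3) and Proposition \ref{interDn} as cases where a single sectional path from $\mathrm{rad}\,P_a$ to $I_b/\mathrm{soc}\,I_b$ exists and $\mathcal{A}\cap\mathcal{C}=\emptyset$ follows; the $E_6$ case is deliberately excluded.

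The repair is to avoid concatenating at all. Having lifted the two sectional paths $X\rightsquigarrow M$ and $M\rightsquigarrow Y$ to $\Gamma_A$ via Lemma \ref{1} and Lemma \ref{10} (which you do correctly), apply Lemma \ref{9} to each piece separately: part (1) with the sectional path from the summand $X$ of $\mathrm{rad}\,P_a$ to $M$ gives $M\notin\mathcal{C}$, and part (2) with the sectional path from $M$ to the summand $Y$ of $I_b/\mathrm{soc}\,I_b$ gives $M\notin\mathcal{A}$. Hence $M\in\mathcal{B}$, so $\mathcal{B}\neq\emptyset$ directly, and Theorem \ref{nilpo} yields $r_A=r_{A_1}+r_{A_2}-r_{A_1\cap A_2}$ without ever passing through Proposition \ref{4} or Theorem \ref{categ}. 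Everything in your proposal up to and including the lifting step matches the paper's argument; only the final logical route needs to be replaced.
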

\begin{proof}
Note that if $X\in \mbox{mod}\,(A_1\cap A_2)$ is the first module that we extend $A_1\cap A_2$ to get $ A_1$, then $X$ is a direct summand of $\mbox{rad}\,P_a$ in $\mbox{mod}\,A$. Similarly, if $Y$ is the first module that we extend $A_1\cap A_2$ to get $A_2$, then $Y$ is a direct summand of $I_b/\mbox{soc}\,I_b$.

By Proposition \ref{3} there exists a module $M$ and sectional paths from $X$ to $M$ and from $M$ to $Y$ en $\Gamma_{A_1}$ and in  $\Gamma_{A_2}$. Applying Lemma \ref{1} and Lemma \ref{10}, we have that there are sectional paths from $X$ to $M$ and from $M$ to $Y$ in $\Gamma_A$.

Since $X$ is a direct summand of $\mbox{rad}\,P_a$, by Lemma \ref{9}, $M \notin \mathcal{C}$. On the other hand, since $Y$ is a direct summand of $I_b/\mbox{soc}\,I_b$, again by Lemma \ref{9}, $M \notin \mathcal{A}$. Therefore, $M \in \mathcal{B}$ proving that  $\mathcal{B}\neq \emptyset$.
\end{proof}

We show in an example how to determine the nilpotency index of the radical of a module category.

\begin{ej}\label{ejemplo1}\emph {Consider the algebra $A=kQ_A/I_A$ given by the presentation}

$${\xymatrix   @R=0.4cm  @C=0.6cm {
&&4\ar[d]&&&&& \\
1\ar@{--}@/_{5mm}/[rrrrr]\ar[r]^{\alpha}&2\ar[r]^{\beta}&3\ar[r]^{\gamma}&5\ar[r]^{\delta}&6\ar[r]^{\lambda}&7&&
}}$$
\vspace{0.1in}

\noindent \emph{where $I_A=<\lambda\delta\gamma\beta\alpha>.$ }

\emph {If we proceed to delete the arrow $\lambda$, we get the hereditary algebra $A_1$ whose quiver is}

$$\begin{array}{cc}
  {\xymatrix   @R=0.3cm  @C=0.6cm {
&&4\ar[d]&&\\
1\ar[r]&2\ar[r]&3\ar[r]&5\ar[r]&6
}} \end{array}$$
\vspace{0.1in}

\noindent \emph{and by Theorem  \ref{ppalher}, $r_{A_1}= 11$. The algebra $A_2$ is obtained by deleting the arrow $\alpha$}

$$\begin{array}{cc} {\xymatrix   @R=0.3cm  @C=0.6cm {
&4\ar[d]&&&\\
2\ar[r]&3\ar[r]&5\ar[r]&6\ar[r]&7
}}\end{array}$$

\noindent \emph{where by Theorem  \ref{ppalher}, $r_{A_2}=2.6 -3 = 9$.  Finally, the intersection algebra $A_1 \cap A_2$ is}

$${\xymatrix   @R=0.3cm  @C=0.6cm {
&4\ar[d]&&\\
2\ar[r]&3\ar[r]&5\ar[r]&6
}}$$
\noindent \emph{where $r_{A_1 \cap A_2}=7$. By Proposition \ref{interDn}, we get that  $r_A= 11+9-7=13$.}
\end{ej}

Finally,  we show how to read the nilpotency index of some tree algebras with $r$ zero-relations not overlapped.

Let $A \simeq kQ_A/I_A$ be a representation-finite tree algebra with $r$ zero-relations $\alpha_{m_i}^{i} \dots \alpha_{1}^{i}$ for $i= 1, \dots, r$, not overlapped.
Assume that $Q_A$ is a gluing of quivers as the ones described in Proposition \ref{ejemplos} (1), (2), (3) and in Proposition \ref{interDn}. More precisely, the quiver of $Q_A$ is as follows:

$$
\xymatrix @!0 @R=1.0cm  @C=1.0cm  {
Q_{A_1}\ar@^{-}[r] & \bullet \ar@^{-}[r]^{\alpha_{1}^{1}} & \ar@{.}[r]  &  \ar@^{-}[r]^{\alpha_{m_1}^{1}}& \bullet \ar@^{-}[r] &\ar@^{-}[r] Q_{A_2}& \bullet \ar@^{-}[r]^{\alpha_{1}^{2}} & \ar@{.}[r]  &  \ar@^{-}[r]^{\alpha_{m_2}^{2}}& \bullet \ar@^{-}[r] &\ar@^{-}[r] Q_{A_3}& \ar@{.}[r]  &\\
&\ar@^{-}[r] & Q_{A_r} \ar@^{-}[r] &\bullet \ar@^{-}[r]^{\alpha_{1}^{r}} & \ar@{.}[r]  &  \ar@^{-}[r]^{\alpha_{m_r}^{r}}& \bullet \ar@^{-}[r] & Q_{A_{r+1}}  \\}$$

\noindent where for each $i=1, \dots, r$

$$
\xymatrix @!0 @R=1.0cm  @C=1.6cm  {
Q_{A_i}\ar@^{-}[r] & \bullet \ar@^{-}[r]^{\alpha_{1}^{i}} & \ar@{.}[r]  &  \ar@^{-}[r]^{\alpha_{m_i}^{i}}& \bullet \ar@^{-}[r] & Q_{A_{i+1}} &}$$

\noindent is a  quiver as the ones described in Proposition \ref{ejemplos}  (1), (2), (3) and in Proposition \ref{interDn} and where the zero-relations are  precisely the ones described in such propositions.

Let $B_1$ to be the algebra where $Q_{B_1}$ is obtained from $Q_A$ by deleting the arrow $\alpha_{m_{2}^{2}}$,   and containing the  relation  $\alpha_{m_1}^{1}\dots \alpha_{1}^{1}$. Let  $B_i$ for $i=2, \dots, r-1$ to be the algebra where $Q_{B_i}$ is obtained from $Q_A$ by deleting the arrow $\alpha_{1}^{i-1}$ and  containing  the relation  $\alpha_{m_i}^{i}\dots \alpha_{1}^{i}$ up to the edge ${\alpha_{{m_{i+1}-1}}^{i+1}}$ and finally, let the algebra $B_r$ to be the one where $Q_{B_r}$ is obtained from $Q_A$ by deleting the arrow $\alpha_{m_{r-1}^{r-1}}$ and containing  the relation  $\alpha_{m_r}^{r} \dots \alpha_{1}^{r}$.
\vspace{.05in}

With this notations we are in position to prove the last theorem.

\begin{teo} \label{m-relations} Let $A \simeq kQ_A/I_A$ be a representation-finite tree algebra with $r$ zero-relations not overlapped, and  $r\geq 2$. Consider the algebras $B_i$ described as above, for $i=1, \dots,  r$. Then $r_A= \emph{max} \{ r_{_{B_i}}\}_{i=1}^{r}$.
\end{teo}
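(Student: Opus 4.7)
The strategy is to combine Theorem~\ref{vertex-nilpo} with the one-zero-relation analysis already developed in Proposition~\ref{ejemplos} and Proposition~\ref{interDn}, together with the pullback/extension machinery of Section~\ref{A1A2}. First I would apply Theorem~\ref{vertex-nilpo}: since $A$ is a representation-finite monomial tree algebra, $\Gamma_A$ is a component with length, and since the $r$ zero-relations of $I_A$ are pairwise non-overlapped, the nilpotency index satisfies
\[
r_A \;=\; \max\bigl\{\, r_{a_i}+1 \,\bigr\}_{i=1}^{r},
\]
where, for each $i$, $a_i$ is any single vertex involved in the $i$-th zero-relation $\alpha_{m_i}^{i}\dots\alpha_{1}^{i}$.

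Next, for each index $i$ I would choose $a_i$ to lie in the interior of $Q_{B_i}$, that is, a vertex involved in the $i$-th relation that belongs to the subquiver defining $B_i$. By construction $B_i$ is a representation-finite tree algebra whose only zero-relation is $\alpha_{m_i}^{i}\dots\alpha_{1}^{i}$, so applying Theorem~\ref{vertex-nilpo} (or Corollary~\ref{coro-vertex}) to $B_i$ gives
\[
r_{B_i}\;=\;r_{a_i}^{B_i}+1,
\]
where $r_{a_i}^{B_i}$ denotes the length of the longest path of irreducible morphisms from $P_{a_i}$ to $I_{a_i}$ going through $S_{a_i}$ in $\emph{mod}\,B_i$. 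The remaining task is to prove the equality $r_{a_i}^{B_i}=r_{a_i}^{A}$.

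For this, I would exhibit $A$ as a finite sequence of one-point extensions and one-point coextensions of $B_i$, each one introducing a projective or injective module attached at a vertex lying outside $Q_{B_i}$. At each stage I would invoke Proposition~\ref{lema S-S} (for extensions) and Proposition~\ref{dual lema S-S} (for coextensions) to transport the almost split sequences lying on the path $P_{a_i}\rightsquigarrow S_{a_i}\rightsquigarrow I_{a_i}$ from one algebra to the next. The compatibility between projective/injective indecomposables in the different algebras is guaranteed by Lemma~\ref{proyiny} applied to the relevant pullback decompositions. Combining the resulting equality $r_{a_i}^A=r_{a_i}^{B_i}$ for all $i$ with the formula from Theorem~\ref{vertex-nilpo} yields $r_A=\max\{r_{B_i}\}_{i=1}^{r}$.

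The main technical obstacle is verifying, at each extension or coextension step, the vanishing Hom-conditions $\mathrm{Hom}_{-}(X,L)=0$ (respectively $\mathrm{Hom}_{-}(N,Y)=0$) required by Propositions~\ref{lema S-S} and~\ref{dual lema S-S}, where $X$ (resp.\ $Y$) is the module being added and $0\to L\to M\to N\to 0$ runs over the almost split sequences on the fixed path from $P_{a_i}$ to $I_{a_i}$. The key point is that because the $r$ zero-relations do not overlap, the vertices outside $Q_{B_i}$ are separated from $a_i$ either by one of the relations $\alpha_{m_j}^{j}\dots\alpha_1^{j}$ with $j\neq i$, or by a sink/source of the tree; this separation forces the required Hom spaces to vanish. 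Once this bookkeeping is carried out, the proof closes immediately.
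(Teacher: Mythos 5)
Your skeleton matches the paper's: both arguments reduce, via Theorem \ref{vertex-nilpo} (the paper invokes Corollary \ref{coro-vertex}), to showing that for a vertex $a_i$ involved in the $i$-th relation the length of a path $P_{a_i}\rightsquigarrow I_{a_i}$ is the same computed in $\mbox{mod}\,A$ and in $\mbox{mod}\,B_i$, and then identify $r_{B_i}=\ell_{B_i}(P_{a_i}\rightsquigarrow I_{a_i})+1$. Where you diverge is in the transfer step, and that is where there is a genuine gap. The paper realizes $A$ as a linearly oriented pullback of $B_i$ and a complementary algebra $C_i$ over $B_i\cap C_i$, observes that $P_{a_i}$ and $I_{a_i}$ both lie in $\mathcal{A}_i=\mbox{Pred}\,(D(B_i))'$, invokes Proposition \ref{4} to conclude $\mathcal{A}_i\cap\mathcal{C}_i=\emptyset$, and deduces that \emph{every} path from $P_{a_i}$ to $I_{a_i}$ in $\Gamma_A$ lies in $\Sigma'_{B_i}$, which by Lemma \ref{sigmas} is a full convex subquiver of $\Gamma_{B_i}$; this is a single global statement about all such paths. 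You instead push one fixed path through a chain of one-point extensions and coextensions using Propositions \ref{lema S-S} and \ref{dual lema S-S}, which requires, at every stage, $\mathrm{Hom}(X,L)=0$ (resp.\ $\mathrm{Hom}(N,Y)=0$) for every end term of every almost split sequence on the path. You assert these vanishings follow from a ``separation'' by the non-overlapping relations, but that is not a proof: the middle terms of the path can have support spread over most of $Q_{B_i}$, including the boundary vertices where the added module $X$ (the radical of a new projective) or $Y$ (a new injective modulo its socle) is attached, and the existence of a nonzero homomorphism depends on the composition series of those modules, not merely on the quiver combinatorics.

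The deeper problem is that your argument never uses the standing hypothesis that $Q_A$ is glued from the specific blocks of Proposition \ref{ejemplos} (1)--(3) and Proposition \ref{interDn}. That hypothesis is not decorative: it supplies the sectional paths from summands of $\mbox{rad}\,P$ to summands of $I/\mbox{soc}\,I$ which make Proposition \ref{4} applicable and hence force $\mathcal{A}_i\cap\mathcal{C}_i=\emptyset$. The example following Theorem \ref{nilpo} shows that already for a single zero-relation the transfer of path lengths can fail when this kind of condition is violated, so a proof relying only on ``non-overlapping'' cannot be complete. The Hom-vanishing bookkeeping you defer to the end is exactly the point where this structural hypothesis must enter; supplying it honestly amounts to reproving the paper's $\mathcal{A}_i\cap\mathcal{C}_i=\emptyset$ step, after which the pullback route is the shorter way home.
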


\begin{proof} Let $r=2$.  By Corollary \ref{coro-vertex}, if $a$ and $b$ are vertices involved in different
zero-relations then $r_A=\mbox{max}\,\{\ell_A(P_{a} \rightsquigarrow I_{a}),\ell_A(P_{b} \rightsquigarrow I_{b})\}+1$. Consider $\rho_1=\alpha_{m_1}^{1} \dots \alpha_{1}^{1}$  and $\rho_2=\alpha_{m_2}^{2} \dots \alpha_{1}^{2}$ the given zero-relations, not overlapped.

We consider $A$ to be the pullback algebra of $B_1 \twoheadrightarrow B_1 \cap C_1$ and  $C_1 \twoheadrightarrow B_1 \cap C_1$  where $B_1$ is the algebra whose bound quiver is  obtained from $Q_A$ by deleting the arrow $\alpha_{1}^{1}$ and containing  the relation  $\rho_2$
and where  $C_1$ is the algebra whose quiver is obtained from $Q_A$ by deleting the arrow $\alpha_{m_1}^{1}$ and containing the arrow $\alpha_1^1$.

Similarly, we consider that $A$ is the pullback algebra of $B_2 \twoheadrightarrow B_2 \cap C_2$ and  $C_2 \twoheadrightarrow B_2 \cap C_2$  where $B_2$ is the algebra whose bound quiver is obtained from $Q_A$ by deleting the arrow $\alpha_{1}^{2}$ and containing  the relation  $\rho_1$  and  where $C_2$ is the algebra whose quiver is obtained from $Q_A$ by deleting the arrow $\alpha_{m_2}^{2}$ and containing the arrow $\alpha_1^2$.

By Lemma \ref{proyiny}, $P_{a}$ and $I_{a}$ in $\mbox{mod}\,B_2$ are also the projective and the injective in the vertex $a$ in $\mbox{mod}\, A$.

We claim that  $\ell(P_{a} \rightsquigarrow I_{a})$ in $\mbox{mod}\, A$ coincides with  $\ell(P_{a} \rightsquigarrow I_{a})$ in $\mbox{mod}\, B_2$. Indeed,  since $I_{a}$ is a direct summand of $D(B_2)'$, then $I_{a}\in  \mathcal{A}_2$, where $\mathcal{A}_2 = \mbox{Pred}\, (D(B_2))'$. Moreover, $P_{a}$ is also in $\mathcal{A}_2$ because $\mbox{Hom}_A(P_{a}, I_{a})\neq 0.$
Because of  the zero-relations  that we are considering then by Proposition \ref{4} we have that $\mathcal{A}_2\cap \mathcal{C}_2=\emptyset$. Thus
all the  paths from $P_{a}$ to $I_{a}$ are in $\Sigma'_{B_2}$. Hence, our claim
follows from Lemma \ref{sigmas}. Moreover, $B_2$ is an algebra with only one zero-relation and therefore $r_{B_2}=\ell(P_{a} \rightsquigarrow I_{a})+1$.

Similarly, we get that $r_{{B_1}}=\ell(P_{b} \rightsquigarrow I_{b})+1.$ In consequence, $r_A= \mbox{max}\, \{ r_{{B_1}},  r_{{B_2} }\}$.

Iterating the above process  to the algebras $B_i$ and $C_i$ as we defined above, we get the result for any $r\geq 2$.\end{proof}

We end up this paper computing $r_A$ for two different algebras.

\begin{ej}
\emph{Consider the algebra $A=Q_A/I_A $ given by the presentation:}

$${\xymatrix   @R=0.4cm  @C=0.9cm {
1\ar[d]&&&&&&&&\\
2\ar[d]\ar[r]_{\alpha_1}\ar@{--}@/^{5mm}/[rr]&3\ar[d]\ar[r]_{\alpha_2}&4&5\ar[r]_{\beta_1}\ar[l]\ar@{--}@/^{5mm}/[rrr]&6\ar[d]\ar[r]_{\beta_2}
& 9\ar[r]_{\beta_3}&10\ar@{--}@/^{5mm}/[rr]\ar[r]_{\gamma_1}&13\ar[r]^{\gamma_2}&14\\
12&7\ar[r]&8&&11&&&&
}}$$
\vspace{0.1in}

\noindent {\emph{with $I_{A}=<\alpha_2\alpha_1,\beta_3\beta_2\beta_1, \gamma_2\gamma_1>.$}}
\vspace{0.1in}

\emph{Let $B_1$ be the algebra given by the quiver}

$${\xymatrix   @R=0.4cm  @C=0.9cm {
1\ar[d]&&&&&\\
2\ar[d]\ar[r]_{\alpha_1}\ar@{--}@/^{5mm}/[rr]&3\ar[d]\ar[r]_{\alpha_2}&4&5\ar[r]\ar[l]&6\ar[d]\ar[r]
& 9\\
12&7\ar[r]&8&&11&
}}$$

\noindent {\emph {with $I_{B_1}=<\alpha_2\alpha_1>$, $B_2$ given by the quiver}}
\vspace{0.1in}

$${\xymatrix   @R=0.4cm  @C=0.9cm {
&&&&&\\
3\ar[d]\ar[r]&4&5\ar[r]_{\beta_1}\ar[l]\ar@{--}@/^{5mm}/[rrr]&6\ar[d]\ar[r]_{\beta_2}
& 9\ar[r]_{\beta_3}&10\ar[r]&13\\
7\ar[r]&8&&11&&&
}}$$
\vspace{0.1in}

\noindent \emph {with $I_{B_2}=<\beta_3\beta_2\beta_1>$  and $B_3$ given by the quiver}

$${\xymatrix   @R=0.4cm  @C=0.9cm {
&&&\\
6\ar[d]\ar[r]_{\beta_2}
& 9\ar[r]_{\beta_3}&10\ar@{--}@/^{5mm}/[rr]\ar[r]&13\ar[r]&14\\
11&&&&
}}$$
\emph{with $I_{B_3}=<\gamma_2\gamma_1>.$ Again, by Theorem \ref{m-relations}, $r_A = \mbox{max}\, \{r_{B_1},r_{B_2},r_{B_3}\}$. Moreover, by Proposition \ref{ejemplos}, we have  that $r_{B_1}=19$, $r_{B_2}=14$ and $r_{B_3}=6$,
and therefore $r_A =19$.}
\end{ej}

\end{document}